\theoremstyle{plain}
\newtheorem{thm}{Theorem}
\newtheorem{cor}[thm]{Corollary}
\newtheorem{prop}[thm]{{\bf Proposition}}
\newtheorem{lem}[thm]{{\bf Lemma}}
\newtheorem{definition}[thm]{Definition}
\newcounter{hyp-counter}
\theoremstyle{definition}
\newtheorem{claim}{Claim}
\newtheorem{defn}[thm]{Definition}
\theoremstyle{remark}
\newtheorem{rem}{Remark}
\newtheorem{example}{Example}
\numberwithin{thm}{section}
\newcommand{\BDiff}{\operatorname{BDiff}}
\newcommand{\Emb}{\operatorname{Emb}}
\newcommand{\F}{\mathscr{F}}
\newcommand{\graph}{\operatorname{graph}}
\newcommand{\Id}{\operatorname{Id}}
\newcommand{\id}{\operatorname{id}}
\newcommand{\Isom}{\operatorname{Isom}}
\newcommand{\Diff}{\operatorname{Diff}}
\newcommand{\Homeo}{\operatorname{Homeo}}
\newcommand{\N}{\mathbb{N}}
\newcommand{\Q}{\mathbb{Q}}
\newcommand{\R}{\mathbb{R}}
\newcommand{\SL}{\operatorname{SL}}
\newcommand{\SO}{\operatorname{SO}}
\newcommand{\Z}{\mathbb{Z}}
\newcommand{\wt}[1]{\widetilde{#1}}
\newcommand{\abs}[1]{\left| #1\right|}
\newcommand{\mc}[1]{\mathcal{#1}}
\newcommand{\mf}[1]{\mathfrak{#1}}
\def\blfootnote{\xdef\@thefnmark{}\@footnotetext}
\title{Smooth Models of Fibered Partially Hyperbolic Systems}
\author{Jonathan DeWitt}
\address{Department of Mathematics, The Pennsylvania State University, State College, PA 16802, USA}
\email{dewitt@psu.edu}
\author{Meg Doucette}
\address{Department of Mathematics, The University of Maryland, College Park, MD 20742, USA}
\email{mdoucett@umd.edu}
\author{Oliver Wang}
\address{Department of Mathematics, The University of Virginia, Charlottesville, VA 22904, USA}
\email{dfh3fs@virginia.edu}
\date{\today}
\begin{document}

\begin{abstract}
We study fibered partially hyperbolic diffeomorphisms. We show that as long as certain topological obstructions vanish and as long as homological minimum expansion dominates the distortion on the fibers that a fibered partially hyperbolic system can be homotoped to a fibered partially hyperbolic system with a $C^{\infty}$-center fibering. In addition, we study obstructions to the existence of smooth lifts of Anosov diffeomorphisms to bundles. In particular, we give an example of smooth topologically trivial bundle over a torus, where an Anosov diffeomorphism can lift continuously but not smoothly to the bundle. 
\end{abstract}
\maketitle

\section{Introduction}

In this paper we study partially hyperbolic diffeomorphisms that preserve an invariant topological fibering by smooth manifolds. Such fiberings often arise in smooth dynamics and typically 
exhibit a degenerate feature: although the fibers are smooth manifolds, they do not form a smooth foliation because they do not vary smoothly transverse to the fiber.
Our main result says that even if such a fibering is not a smooth fibering, then we can deform the partially hyperbolic diffeomorphism so that the result will be partially hyperbolic and the fibering will become smooth.

Let us make these notions slightly more precise. A $C^k$ diffeomorphism $f\colon M\to M$ of a closed Riemannian manifold $M$ is said to be \emph{partially hyperbolic} if it preserves a continuous $Df$ invariant splitting into three continuous subbundles $TM=E^s\oplus E^c\oplus E^u$, where the expansion rate on each bundle dominates the behavior on the previous bundle. See Definition \ref{defn:partially_hyperbolic} for a formal definition. These bundles are typically only H\"older continuous \cite{hasselblatt1999prevalence}. While the $E^s$ and $E^u$ bundles are integrable and always integrate to topological foliations with $C^k$ leaves \cite{hirsch1977invariant}, the $E^c$ bundle need not be integrable. However, in the case that $E^c$ is tangent to a topological foliation, this foliation will have $C^{1+\text{H\"older}}$ leaves. 

In this paper, we study partially hyperbolic diffeomorphisms $f\colon M\to M$ that are fibered: this means that the center bundle $E^c$ is tangent to a topological foliation $\mc{W}^c$ with leaves that are $C^1$ and such that that each leaf is diffeomorphic to a fixed compact manifold, and that $\mc{W}^c$ forms a continuous fiber bundle of $M$ over some other topological manifold $B$.

Partially hyperbolic diffeomorphisms are a natural setting for studying many of the basic questions in dynamics as they include a rich mix of behaviors. They are generalization of \emph{Anosov} diffeomorphisms, see Definition \ref{defn:partially_hyperbolic}. Relatively few partially hyperbolic diffeomorphisms are known. Many examples, including some of the most important, exhibit a fibered structure: There exists a smooth foliation with compact leaves that is left invariant by the dynamics. Such examples are easy to construct.

The simplest examples of partially hyperbolic dynamics are skew products, which are defined as follows. 
Suppose that $f\colon B\to B$ is an Anosov diffeomorphism, $N$ is a closed manifold, and $\phi\colon B\to \Diff^1(N)$ is a smooth map. Then we may define a diffeomorphism, which is called a \emph{skew product}, on $B\times N$ by $(x,y)\mapsto (f(x),\phi(x)(y))$. As long as $\phi$ takes values sufficiently close to the identity, the resulting map will be partially hyperbolic.   Note that a skew product preserves a smooth foliation with compact leaves, hence we refer to skew products $f$ as being \emph{smoothly fibered}.  Many important examples in dynamics are derived from this construction or a different version of it, where instead the hyperbolicity is along the fibers as in \cite{gogolev2015new}.

If one takes such a smoothly fibered partially hyperbolic diffeomorphism and perturbs it, the perturbation will still preserve a topological foliation with smooth fibers \cite{hirsch1977invariant}. Our main result gives, in certain cases, a type of converse to this statement: If we start with a partially hyperbolic diffeomorphism $f$ preserving a topological foliation with smooth fibers, then we can continuously deform $f$ to a partially hyperbolic diffeomorphism $g$ of a potentially new smooth structure so that $g$ will preserve a smooth foliation. After lifting to a finite covers, the smooth structure can be kept the same.

The quotient dynamics of a fibered partially hyperbolic system such as we consider here are  those of an \emph{Anosov homeomorphism} $\bar f\colon B\to B$ (See Subsec.\ \ref{subsec:anosov_homeo}, \cite[Sec.\ 4]{gogolev2011partially}). This implies that when $B$ is a topological nilmanifold, that there is a smooth structure on the quotient that makes the quotient dynamics those of an Anosov automorphism (See the discussion in Section \ref{sec:global smooth models} for more detail). In the case of the torus, an Anosov automorphism is a diffeomorphism defined by the linear action of an element $A\in \SL(d,\Z)$ on $\R^d/\Z^d$, where $A$ has no eigenvalues of modulus $1$. It turns out that that an Anosov homeomorphism has an essentially unique linear model determined by its action on homology.

If we quotient by a topological foliation, the quotient manifold $B$ does not necessarily come equipped with a smooth structure. Hence it is natural to equip $B$ with a structure that turns the quotient dynamics into those of an Anosov automorphism. In order to be partially hyperbolic, a diffeomorphism must have its action on the stable and unstable bundles dominate its action on its center bundle. Hence if we want to deform our original diffeomorphism $f$ so that the deformation fibers over an Anosov automorphism $A$, it is natural to assume that $A$ \emph{dominates} $f$ along the center fibers, i.e.\ the eigenvalues of $A$ are larger than norm of $f$ and $f^{-1}$ along $E^c$ at every point. As $A$ is determined by the action of $\bar f$ on homology, we say that the action of $\bar f$ on homology dominates the action of $f$ along fibers.

Our main result says that if the action on homology of $\bar{f}$ dominates the action of $f$ along $E^c$, then we can deform $f$ so that it will be smoothly fibered.

\begin{thm}
Suppose that $f\colon M\to M$ is a partially hyperbolic diffeomorphism whose center bundle $E^c$ is tangent to a topological foliation $\mc{W}^c$ with continuously varying $C^1$ leaves and that $\mc{F}^c$ gives a topological fibering of $M$ over a topological nilmanifold $B$. The resulting quotient map $\bar f$ is an Anosov homeomorphism $\overline{f}\colon B\to B$.  Suppose that the action of $\overline{f}$ on homology dominates the action of $f$ along fibers. 

Then there is a smooth structure $M'$ on $M$, and continuous deformation of $f$ to a partially hyperbolic diffeomorphism $g$ such that $g$ is a smoothly fibered map of $M'$ fibering over an Anosov automorphism.
\end{thm}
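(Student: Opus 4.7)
The plan is to linearize the dynamics on the base, realize the topological bundle smoothly over this linearized base, and then smoothly deform the induced lift, using the domination hypothesis to preserve partial hyperbolicity throughout. For Step 1, since $B$ is a topological nilmanifold and $\bar f$ is an Anosov homeomorphism, a Franks--Manning type rigidity argument produces a smooth nilmanifold structure $B'$ on $B$ and a homeomorphism $h\colon B\to B'$ conjugating $\bar f$ to an Anosov automorphism $A\colon B'\to B'$ induced by the homology action $\bar f_*$. The domination hypothesis then translates to the statement that the eigenvalues of $A$ strictly dominate the fiberwise distortion of $f$.

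For Step 2, the continuous fiber bundle $\pi\colon M\to B$ has compact smooth fiber $N$ and structure group contained in $\Diff(N)$. A standard smoothing argument for $\Diff(N)$-bundles over a smooth base realizes the pullback of $\pi$ by $h^{-1}$ as a smooth fiber bundle $\pi'\colon M'\to B'$, together with a homeomorphism $H\colon M\to M'$ covering $h$. Setting $\tilde f = H\circ f\circ H^{-1}$ gives a fibered homeomorphism of $M'$ covering $A$ with the same fiberwise distortion as $f$. For Step 3, the core step, I would construct a smoothly fibered diffeomorphism $g\colon M'\to M'$ covering $A$ and a continuous path from $\tilde f$ to $g$ through fibered homeomorphisms covering $A$. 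Fixing a smooth Ehresmann connection on $\pi'$, every lift of $A$ decomposes as $A$-horizontal parallel transport composed with a continuously varying family of fiberwise diffeomorphisms. I would then smoothly approximate the fiberwise family of $\tilde f$ in the $C^1$-topology on fibers to produce $g$, and take the pointwise straight-line homotopy in the fiberwise component to obtain a continuous path $\tilde f_t$ from $\tilde f$ to $g$.

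For Step 4, each $\tilde f_t$ covers the same Anosov automorphism $A$, so its horizontal expansion and contraction rates are controlled by the eigenvalues of $A$. By the domination hypothesis these strictly dominate the fiberwise distortion of $f$, which varies continuously along the homotopy. A cone-field argument then yields a uniformly dominated $D\tilde f_t$-invariant splitting with center tangent to the fibers, so every $\tilde f_t$ is partially hyperbolic. Pulling back under $H$ produces the required continuous deformation from $f$ to the smoothly fibered partially hyperbolic diffeomorphism $g$ of $M'$.

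The primary obstacle is Step 3: the abstract advertises examples of Anosov diffeomorphisms that admit continuous but not smooth lifts to smooth bundles, so one cannot smooth an arbitrary fibered lift. The domination hypothesis must therefore be what forces the relevant obstruction to vanish, and I expect this is where the paper's main technical content lies, plausibly by exploiting the hyperbolicity of $A$ on long orbits to average the fiberwise component of $\tilde f$ into a smooth representative while keeping the homotopy partially hyperbolic.
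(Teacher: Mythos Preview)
Your overall architecture---linearize the base, smooth the bundle, smooth the lift, verify partial hyperbolicity---matches the paper's. But you have mislocated the main difficulty, and this creates a genuine gap.

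The gap is in your Step 2, which you dismiss as a ``standard smoothing argument for $\Diff(N)$-bundles.'' It is true that a continuous $\Diff(N)$-bundle over a smooth base can be smoothed abstractly by smoothing the classifying map. What is \emph{not} standard, and what the paper devotes all of Section~3 to, is doing this with geometric control: you need the bundle isomorphism $H\colon M\to M'$ to be an $\epsilon$-almost isometry along each fiber, so that $\tilde f = HfH^{-1}$ has fiberwise distortion close to that of $f$. Your assertion that $\tilde f$ has ``the same fiberwise distortion as $f$'' is exactly what requires justification---an arbitrary $C^1$-bundle isomorphism can distort fibers arbitrarily, and the pushforward metric under $H$ is only continuous, not smooth. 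The paper's Proposition~3.3 constructs an $\epsilon$-controlled topological isotopy by inducting over a cell structure of $B'$ and mollifying embeddings of fibers into Euclidean space, carefully tracking the $C^1$-norm of normal projections between nearby fibers. This is where the smooth structure $M'$ (possibly different from $M$) arises.

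Your worry about Step 3 is misplaced. Once the bundle is genuinely $C^\infty$ and the lift is uniformly $C^1$ along fibers with a smooth quotient map, mollification in local trivializations (the paper's Lemma~5.3) smooths the lift with arbitrarily small change in fiberwise distortion; there is no obstruction here. The examples in Section~7 concern a different question: whether a given Anosov automorphism lifts at all to a fixed smooth bundle, obstructed by characteristic classes in $H^*(\BDiff(F))$. In the main theorem the lift already exists (it is $f$), and the smooth bundle structure $M'$ is built from it, so those obstructions do not arise. Finally, the domination hypothesis plays no role in smoothing; it is used only at the end (the paper's Lemma~5.4, a graph transform rather than cones, but morally the same) to certify that the resulting smooth fibered map is partially hyperbolic.
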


\noindent A fully elaborated statement, Theorem \ref{thm:globalsmoothmodels}, follows below.

Note that in the statement of the theorem above, $M'$ need not be diffeomorphic to $M$. Such a change of smooth structure seems unavoidable because our goal is to deform the map so that it fibers over an Anosov automorphism. If the original map smoothly fibers over an Anosov diffeomorphism of an exotic torus, then this might force the smooth structure to change. 
In Theorem \ref{thm: smoothings of total space}, we show, using techniques from smoothing theory, that $M$ and $M'$ are diffeomorphic after taking a finite sheeted cover, provided that $M$ has dimension at least $5$.
In addition, we use recent results in high-dimensional topology to construct smooth foliations that are isomorphic to center foliations of partially hyperbolic diffeomorphisms but which are not themselves center foliations of any partially hyperbolic diffeomorphism.

\subsection{Relationship with other works}

This work fits naturally with many of the existing classification programs for hyperbolic and partially hyperbolic dynamical systems. In some sense, dynamical systems that exhibit strong features or symmetries may turn out to have been algebraic systems in disguise: by a change of variables one may identify them as algebraic systems.

Here we are approaching the problem of classification of dynamical systems from a softer and more topological viewpoint. Our main result is evidence, that if one is interested in classifying dynamical systems up to homotopy type, then one can reduce to studying systems that preserve smooth invariant structures. Namely, all systems that have invariant fibering are (essentially) skew products. This reduces a relatively open ended dynamical question to a much more precise one about classifying skew products, which is already studied. Further, this result rules out the existence of certain kinds of ``exotic" foliations.

Another way to look at the same problem is the following: When can we deform a dynamics so that a rough invariant structure becomes a smoother one? In some sense, questions of this type are well known although they do not appear to be studied explicitly in many places. For example, Avila studied this when he showed that if one has a $C^k$ volume preserving diffeomorphism, that it can be approximated by a $C^{\infty}$ volume preserving diffeomorphism \cite{avila2010on}. Note however, that we cannot show any nearness in our main result. For all known Anosov diffeomorphisms we can always deform them to an algebraic example, the action on $H_1$, but for partially hyperbolic diffeomorphisms there is no such known simple description.

This work provides a natural complement to earlier work of Doucette \cite{doucette2023smooth}. In that work, smooth models were constructed of certain fibered partially hyperbolic diffeomorphisms that are isometric on the center. 

\begin{thm}\label{thm:isometric_center}
\cite{doucette2023smooth} Let $f\colon M\to M$ be a fibered partially hyperbolic system with quotient a topological nilmanifold $B$ and fiber $F$ (where $F$ is a closed manifold). Suppose that the structure group of the $F$-bundle $M$ is $G\subset \Diff^1(F)$ and that there is a Riemannian metric on $F$ and a subgroup $I$ of $\Isom(F)\cap G$ such that inclusion $I\to G$ is a homotopy equivalence. Then $f$ is leaf conjugate to a partially hyperbolic system $g\colon \widehat{M}\to \widehat{M}$ with a $C^{\infty}$ center fibering that is fiberwise isometric. 
\end{thm}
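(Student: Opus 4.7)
The plan is to proceed in three phases: (i) replace the bundle $M\to B$ with a bundle having isometric structure group; (ii) put the quotient dynamics into algebraic normal form on the base; and (iii) construct the smoothly fibered, fiberwise isometric map $g$ by deforming the cocycle of $f$ and verify leaf conjugacy.

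First I would use the hypothesis that $I\hookrightarrow G$ is a homotopy equivalence to reduce the structure group. Since $BI\to BG$ is then a weak homotopy equivalence, the homotopy class of the classifying map $B\to BG$ of the $F$-bundle $M$ lifts uniquely up to homotopy through $BI$. This produces a topological $F$-bundle $\widehat{M}\to B$ with structure group $I\subset\Isom(F)$, together with a bundle isomorphism $\Phi\colon M\to\widehat{M}$. Because $\widehat{M}$ has $I$-valued transition functions, its fibers carry a continuously varying Riemannian metric, and the bundle itself acquires a $C^\infty$ center fibering (the transition functions act smoothly on $F$). I would transport $f$ via $\Phi$ to a fibered partially hyperbolic diffeomorphism $\widetilde{f}=\Phi\circ f\circ\Phi^{-1}\colon\widehat{M}\to\widehat{M}$; this is trivially leaf conjugate to $f$.

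Second, the induced quotient homeomorphism $\overline{\widetilde{f}}\colon B\to B$ is an Anosov homeomorphism of a topological nilmanifold. By the standard Franks--Manning type argument for nilmanifolds referenced in the introduction, $B$ admits a smooth nilmanifold structure with respect to which $\overline{\widetilde{f}}$ is topologically conjugate to an Anosov automorphism $A$. I would use this conjugacy to further replace $\widetilde{f}$ (again by an explicit leaf conjugacy via pulling back the bundle along the conjugating homeomorphism) so that we may assume the base map is the smooth automorphism $A$ while the total space bundle still has structure group $I$.

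Finally, the heart of the proof is to deform the remaining fiberwise behavior of $\widetilde{f}$ to take values in $I$. Locally, $\widetilde{f}$ is described by a continuous cocycle $c\colon B\to G$ over $A$, recording the fiber map from $F_x$ to $F_{A(x)}$ in chosen $I$-trivializations. Because the inclusion $I\hookrightarrow G$ is a homotopy equivalence and $B$ has the homotopy type of a CW complex, the map $c$ is homotopic through continuous maps $B\to G$ to a map $c_\infty\colon B\to I$. I would define $g\colon\widehat{M}\to\widehat{M}$ to be the bundle map over $A$ given by the cocycle $c_\infty$, so that $g$ acts on each fiber by an isometry of $F$ and is smoothly fibered. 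Partial hyperbolicity is automatic: since $A$ is Anosov on $B$ and $\|Dg|_{E^c}\|\equiv 1$ along fibers, the hyperbolic rates of the base strictly dominate the centers. The homotopy $c_t$ from $c$ to $c_\infty$ yields a continuous one-parameter family of fibered homeomorphisms over $A$ connecting $\widetilde{f}$ to $g$; a standard application of the Hirsch--Pugh--Shub leaf-conjugacy / structural-stability-along-fibers argument along this path produces a fiber-preserving homeomorphism realizing a leaf conjugacy between $\widetilde{f}$ and $g$, which composed with $\Phi$ yields the desired leaf conjugacy from $f$ to $g$.

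The main obstacle, and where care is most needed, is the third step: producing the homotopy $c_t$ and then converting it into an actual leaf conjugacy rather than just a homotopy of maps. The cocycle deformation is a parametrized obstruction-theoretic argument whose only input is the homotopy equivalence $I\simeq G$, so it is fairly clean; but to cash it in for a leaf conjugacy one must keep every member of the family fibering over the Anosov base $A$ and invoke the structural stability of the $A$-action (together with the fact that the fibers remain compact and dynamically trivial) to upgrade the homotopy to a conjugacy along center leaves. This is precisely the step where the domination hypothesis, here guaranteed for free by isometry, is indispensable.
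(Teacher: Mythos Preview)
This theorem is not proved in the present paper; it is quoted from \cite{doucette2023smooth} as context. There is thus no in-paper proof to compare against, though the paper remarks (opening of Section~\ref{sec:global smooth models}) that its own argument follows ``the same basic approach,'' so Lemmas~\ref{lem:globalsmooth_smoothandlifttohomeo}--\ref{lem:smoothmodel_liftisph} may be read as a template for what the argument in \cite{doucette2023smooth} looks like.

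Your three-phase outline has the right shape, but phase (iii) has a genuine gap. You assert that $\widetilde f$ is described by a cocycle $c\colon B\to G$; however, the hypothesis on $G$ constrains only the \emph{transition functions} of the bundle, not the fiber maps of $f$, so $c$ lands in $\Diff^1(F)$ and the equivalence $I\simeq G$ does not by itself let you homotope it into $I$. Fortunately that deformation is unnecessary: once both $\widetilde f$ and a fiberwise-isometric $g$ preserve the \emph{same} fibration $\widehat M\to\hat B$ and descend to the \emph{same} $A$, the identity on $\widehat M$ is already a leaf conjugacy, and no HPS-along-a-path argument is needed. What you must actually supply is (a) the existence of \emph{some} $I$-bundle automorphism of $\widehat M$ lifting $A$ (the lift $f$ only shows the bundle is $A$-invariant as a $\Diff^1(F)$-bundle; one must promote this to $I$-invariance), and (b) a $C^\infty$ structure on $\widehat M$ making the fibering and $g$ smooth, which you do not address at all. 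In the isometric setting (b) is easier than in this paper's Proposition~\ref{prop:smoothing_bundle_prop} because $I$ is a finite-dimensional Lie group and the bundle can be smoothed via its classifying map. A smaller inaccuracy: $\widetilde f=\Phi f\Phi^{-1}$ in phase (i) is only a fiberwise-$C^1$ homeomorphism, not a diffeomorphism, since $\Phi$ is merely continuous transversally.
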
 

Although Theorem \ref{thm:isometric_center} does not explicitly consider whether the maps are homotopic, a natural obstruction arises there: if one views the action on the center foliation as being defined by a map $\phi\colon B\to \Diff(N)$ where $N$ is a model fiber, then this map on the center must lie in the homotopy class of a map $\psi\colon B\to \text{Isom}(N)$. 

The classification of partially hyperbolic diffeomorphisms up to ``leaf conjugacy" is not particularly natural if one wishes to obtain a clear picture of the space of partially hyperbolic diffeomorphisms up to homotopy. Two partially hyperbolic diffeomorphisms may be leaf conjugate even if they are not diffeomorphisms of the same smooth manifold. For this reason, in this paper we emphasize the homotopy properties of the conjugacy.

On the other hand, this work is also related to other works in dynamics that contend with issues in high dimensional topology.  It has long been known that even on tori with exotic smooth structures there exist Anosov diffeomorphisms \cite{farrell1978anosov}. See also \cite{farrell2012anosov} and \cite{farrell2014space}, which use high dimensional techniques to produce new, exotic Anosov diffeomorphisms as well as study the space of Anosov diffeomorphisms itself. See \cite{farrell2015exotic} for an overview. 

\subsection{Novelties in our work}
    One of the major technical difficulties in this work is showing that a topological bundle that lacks a smooth fibering structure may be deformed to one that does fiber smoothly. In order to achieve this, we needed to first build some theory in order to show that a rough fibering may be smoothed out. A continuous vector bundle can always be smoothed out to turn it into a smooth one, essentially because its classifying map can be approximated by a smooth map. However, in our case the space of embeddings of the fibers, which is a type of non-linear Grassmannian, is not smooth enough that one can smooth the bundle out in the same way one can smooth a continuous vector bundle \cite{alias2011on}. 
Furthermore, as we need to control the dynamics on the center bundle, we cannot just consider the bundle abstractly as a bundle with manifold fibers. We must keep track carefully of the geometry of the fibers, so that it is easy to tell that the map we ultimately create will have its norm along center fibers controlled by the norm of the original diffeomorphism.

Another novelty in this work is that we put emphasis on the deformative approach. Our goal is not proving that any particular pair of maps is conjugate. Rather, we are interested in studying the space of all deformations of a partially hyperbolic system and identifying particularly nice smoothly fibered elements should they exist. As a consequence, this work encounters homotopical issues that might otherwise be avoided.

\subsection{Outline}
In Section \ref{sec:defs}, we review basic definitions.
In Section \ref{sec:smoothing bundles} we show that a uniformly $C^1$-bundles can be smoothed to $C^\infty$-bundles in a controlled way.
In Section \ref{sec:global smooth models}, we discuss our basic approach to building a smooth model and give several results we use to prove our main theorem.
We state and prove our main theorem in Section \ref{sec:Main Thm}.
In Section \ref{sec:smooth structures on the total space}, we study the smooth structure of total spaces of bundles over nilmanifolds.
Finally, in Section \ref{sec: topological sphere bundles} we apply results of Section \ref{sec:smooth structures on the total space} to construct smooth foliations which are not the center foliation of any partially hyperbolic diffeomorphism but which are leaf conjugate to one.

\

\noindent\textbf{Acknowledgments.} The first author was supported by the National Science Foundation under Award No.\ DMS-2202967.
The third author was supported by the National Science Foundation under Award No.\ DMS-1839968. The authors are grateful to Andrey Gogolev and Amie Wilkinson for helpful comments on the manuscript.

\section{Definitions} \label{sec:defs}

In this section, we review some definitions that will be used later; these definitions are for the most part quite standard. If the reader can already guess what a ``uniformly $C^1$ fiber bundle" should be, and intuits that its fibers do \emph{not} form a $C^1$ foliation, then they are advised to skip this section for now. Otherwise, we advise them to familiarize themselves with these definitions, which are very common in dynamics but rare in topology.

\subsection{Foliations, Fiberings, and Regularity}
    \label{sec:defs:foliations_fiberings_regularity}

In this paper we will consider foliations and fiber bundles with particularly low regularity, as often occur in  dynamical systems. In particular, we will consider objects that are composed of smooth objects that vary in an irregular way transversely. The definitions below are a little bit technical.

First, we recall several definitions concerning manifolds.
For us a \emph{topological} manifold is a connected, locally Euclidean second countable Hausdorff space. We regard topological manifolds as being determined by a collection of continuous charts. A $C^k$ manifold is a topological manifold endowed with a collection of $C^k$ charts. See for example, \cite[Ch.I.1]{hirsch1976differential}. Every $C^1$ manifold structure gives rise to a compatible smooth structure, and all such smooth structures are $C^{\infty}$ diffeomorphic. When we refer to a \emph{smooth} manifold, we mean that the manifold has a $C^{\infty}$ atlas. When we are working with a Riemannian structure $g$ on a smooth manifold, we will always take this $g$ to be smooth. 

\subsubsection{Foliations}

We now consider the various regularities that a foliation of a manifold might have.

\begin{defn}\label{defn:topological_foliation}
Suppose that $M$ is a topological manifold of dimension $d$. A topological foliation of $M$ of dimension $k\le \dim M$, is a collection of charts
\[
\psi_{\alpha}\colon \R^{k}\times \R^{d-k}\to M
\]
such that the transition functions for another chart $\psi_{\beta}$ are of the form 
\[
(\psi_{\beta}^{-1}\circ\psi_{\alpha})(x_k,y_{d-k})=(\psi_{\alpha\beta}^1(x_k,y_{d-k}),\psi^2_{\alpha,\beta}(y_{d-k}))\in \R^{k}\times \R^{d-k}.
\]
where the functions $\psi^1_{\alpha,\beta}\in \R^{k}$ and $\psi^2_{\alpha,\beta}\in \R^{d-k}$ are continuous. 
\end{defn}
\noindent See also \cite[Sec.~6]{pugh1997holder} and \cite[Sec.~2]{pugh2012holder} for additional discussion. Informally, one may regard a topological foliation of a manifold $M$ as a partition of $M$ into immersed topological submanifolds that locally are homeomorphic to a foliation of $\R^d$ by affine subspaces. Note that the existence of a topological foliation does not require any extra regularity of the manifold $M$.

Next, we discuss the meaning of a $C^k$ foliation. There are several different definitions of what a $C^k$ foliation of a $C^{\infty}$ manifold $M$ is. For us, we will use the definition that a $C^k$ foliation is a foliation atlas as in Definition \ref{defn:topological_foliation} such that the charts are all $C^k$ diffeomorphisms. As before, for this and what follows, see \cite{pugh1997holder} and \cite{hirsch1977invariant} for a detailed discussion. We will not make much use of this notion.

The most important notion for us the following a topological foliation where each leaf is actually an immersed smooth manifold. 

\begin{defn}\label{defn:uniformly_C_r}
Suppose that $M$ is a smooth manifold and that $\mc{F}$ is a topological foliation of $M$. For $r\in \N$, we say that $\mc{F}$ has \emph{uniformly $C^r$ leaves} if the following is true. Fix a Riemannian metric on $M$. For each $p\in M$, consider an exponential chart at $p$. Let $E(\delta)$ denote the ball of radius $\delta$ in a normed space $E$. Then in this chart, we may represent $\mc{F}$ as graphs $T_p\mc{F}(\delta)\to (T_p\mc{F})^{\perp}$ over $T_p\mc{F}$:
\begin{equation}\label{eqn:exp_chart_k_jets1}
(x,y)\mapsto \exp_p((x,g(x,y))).
\end{equation}
This is a map $T_{p}\mc{F}(\delta)\times (T_p\mc{F})^{\perp}\to M$. For fixed $y$ as we vary $x$, this represents a leaf of the foliation as a graph over $T_p\mc{F}$. We say that $\mc{F}$ is uniformly $C^r$ if the map $x\mapsto g(x,y)$ is $C^r$ and its derivatives through order $r$ with respect to $x$ vary uniformly continuously with $(x,y)$.
\end{defn}
\noindent The most important thing to note is that a \emph{uniformly} $C^r$ foliation need not be a $C^r$ foliation. One should treat \emph{uniformly $C^r$} as an atomic concept. An equivalent formulation given in \cite{pugh2012holder} of a foliation being uniformly $C^r$ means that we can find foliation boxes $\phi\colon D^k\times D^{d-k}\to M$ where $\frac{\partial^j\phi}{\partial x^k}(x,y)$ exists everywhere and varies continuously in $x$.  
In \cite{hirsch1977invariant}, the authors define a $C^r$-lamination as a topological foliation where the jet of the tangent distribution of the leaves is a continuous $r$-jet along the manifold \cite[p.~115]{hirsch1977invariant}. Let $\Lambda$ be a compact subset of $M$ and $X$ be a compact topological space. Then $\pi\colon \Lambda\to X$ is a called a ``$C^r$ fibration" if it is a locally trivial fibering and the fibers are a $C^r$-lamination of $\Lambda$ \cite[p.\ 137]{hirsch1976differential}. The authors prove that if one has such a map $\pi\colon \Lambda\to X$, $f_1$ a diffeomorphism preserving the fibers of $\pi$, and the fibers of $\pi$ form a $C^r$-lamination to which $f_1$ is normally hyperbolic, then if $f_2$ is a perturbation, $f_2$ will preserve its own topological fibering with $C^r$ fibers whose jet varies continuously (See \cite[(8.2),(8.3),(7.4)]{hirsch1977invariant}). It is straightforward to see that this definition agrees with the one in Definition \ref{defn:uniformly_C_r}: To construct the charts, work in a chart and project a leaf orthogonally onto the leaves of a foliation. As the jets vary continuously, so too these projections will piece together to form charts as above.

\subsubsection{Fiber bundles}

We will belabor a bit the discussion of fiber bundles as the way we describe them will use a bit more data than is typical. Most of the material in this section is also covered in \cite[Sections 2.2-2.3]{DoucetteThesis}.

For us a continuous fiber bundle with topological manifold fiber $F$ is given by a map $\pi\colon M\to B$, where $M$ and $B$ are topological manifolds and the map $\pi$ is a topological submersion, i.e.~around every point there is a neighborhood where the map is topologically equivalent to $\pi\colon F\times U\to U$. In what follows we will more frequently think of this fiber bundle structure as a partition of $M$ into manifolds homeomorphic to $F$ so that locally the manifolds may be arranged into charts. For more details see \cite[Section 2.3]{DoucetteThesis}. 

\begin{defn} \cite[p.137]{hirsch1977invariant} \label{def:C^rRegularFibration}
Let $M$ be a smooth manifold, and let $X$ be a compact Hausdorff space.
A surjection $\pi\colon M\to X$ is a $C^r$\emph{-regular fibration} if it is a locally trivial fibration (i.e.~a fiber bundle) and its fibers form a uniformly $C^r$ foliation. 
\end{defn}

One of the main results of \cite{hirsch1977invariant} asserts that such a fibering, if it is normally hyperbolic, persists after perturbation. Here we will not work directly with this definition, but will prefer a specialization where the base is a manifold.

\begin{defn}
[Section 2.1 in \cite{Avila_Viana_Wilkinson_2022}, Section 2.2-2.3 in \cite{DoucetteThesis}]
\label{def:fiber_bundle_Ck_fibers}
    Let $M$ and $F$ be $C^k$ manifolds ($k\geq 1)$, and let $B$ be a topological manifold. Let $\pi\colon M\to B$ be a continuous $F$-bundle. We say that the fiber bundle $\pi\colon M\to B$ is a \textit{continuous (fiber) bundle with $C^k$ fibers $F$} if each fiber of $\pi$ is a $C^k$ embedded submanifold of $M$ that is diffeomorphic to $F$, and the $k$-jets along these submanifolds vary continously in $M$.
\end{defn}

\begin{rem}
\noindent\begin{enumerate}
    \item 
        Definition \ref{def:fiber_bundle_Ck_fibers} is equivalent to requiring $\pi\colon M\to B$ to be a continuous $F$-bundle with structure group $\Diff^k(F)$.
    \item 
        Note the distinction between a $C^k$ fiber bundle and a continuous fiber bundle with $C^k$ fibers. If $\pi\colon M\to B$ is a $C^k$ $F$-bundle, then the bundle $\pi\colon M\to B$ must actually be a $C^k$ bundle (so $B$ must be a $C^k$ manifold and $\pi\colon M\to B$ is a $C^k$ submersion), whereas if $\pi\colon M\to B$ is a continuous bundle with $C^k$ fibers, the $F$-bundle $\pi\colon M\to B$ is merely continuous. Note that a $C^k$ fiber bundle automatically has $C^k$ fibers.
    \item A continuous fiber bundle with $C^r$ fibers is the same as a $C^r$-regular fibration when the base of the fibration is a manifold.
    
\end{enumerate}
\end{rem}

Clearly the fibers of a continuous bundle $\pi:M\to B$ with $C^k$ fibers $F$ forms a (very, very nicely behaved) continuous foliation of $M$ by leaves that are $C^1$ diffeomorphic to $F$, but that vary only continuously in the transverse direction. In this paper, we will often think of continuous fiber bundles with $C^k$ leaves in terms of this foliation. Viewing them this way ensures that we don't lose sight of the geometry of the individual fibers. 

In this paper, we will be interested in ``smoothing out'' the foliations that arise from continuous fiber bundles with $C^1$ fibers. This will lead us to frequently consider maps that are differentiable along fibers, but are merely continuous in the transverse direction, whose derivatives in the fiberwise direction vary continuously in the transverse direction. This leads us to the following definition:

\begin{definition}\label{defn:uniformly_C_1}
Suppose that $\pi\colon M\to B$ is a continuous fiber bundle with $C^1$ fibers. Suppose that $f\colon M\to M$ is a homeomorphism preserving $\pi$. Then we say that $f$ is \emph{ uniformly $C^k$ along the fibering} if the $k$-jet of $f$ restricted to the leaves of the fibering varies continuously. In other words, in the charts provided by \eqref{eqn:exp_chart_k_jets1}, $f$ is $C^k$ along leaves and its derivative varies $C^0$ continuously transversely to the leaves.
\end{definition}

The above definition is a specialization of the notion of being uniformly $C^k$ along a foliation as is introduced in \cite{hirsch1977invariant}, \cite{pugh1997holder}. Using this notion we can formulate natural definition of the isomorphism of topological $C^1$ bundles. This definitions is probably obvious, but we record it for the sake of completeness.

\begin{defn}\label{defn:isomorphism_of_continuous_C_K_bundles}
We say that two continuous $F$-bundles $M_1$ and $M_2$ with $C^k$ fibers are \emph{isomorphic} if there exists a uniformly $C^k$ homeomorphism $\phi\colon M_1\to M_2$ along the fibering such that $\phi^{-1}$ is also a uniformly $C^k$ homeomorphism. Note that for this it suffices that $\phi$ is uniformly $C^k$ and a diffeomorphism along fibers.
\end{defn}

\subsection{Partial hyperbolicity}
Next, we review some definitions of particular kinds of dynamics. 
\begin{defn}\label{defn:partially_hyperbolic}
Suppose that $M$ is a closed Riemannian manifold. We say that a $C^1$ diffeomorphism $f$ is (absolutely) \emph{partially hyperbolic} if there exists a non-trivial $Df$-invariant splitting $TM=E^s\oplus E^c\oplus E^u$ such that there exist $0<\nu<1<\lambda $ such that for every $x\in M$,
\begin{equation}
\|D_xf\vert E^s\|<\nu<m(D_xf\vert_{E^c})<\|D_xf\vert_{E^c}\|<\lambda<m(D_xf\vert_{E^u}),
\end{equation}
where $m(A)$ denotes the conorm of a linear transformation, i.e. $m(A)=\inf_{\|v\|=1} \|Av\|$.

Analogously we say that a diffeomorphism is \emph{Anosov} if $TM$ spits as the direct sum of two subbundle $E^s$ and $E^u$ satisfying the same estimate as above.
\end{defn}

The stable and unstable foliations of a partially hyperbolic diffeomorphism are uniformly $C^r$ foliations as long as $f$ is $C^r$.

   Recall that for a linear map $A$, $m(A)=\left\|A^{-1} \right\|^{-1}$.

    \begin{defn}\label{def:distortion}
        Let $f\colon M\to M$ be a partially hyperbolic diffeomorphism. The \emph{distortion of $f$ along $E^c$} is 
            $$
                D(f)=\max_{x\in M} \left\{ \left\|D_x f|_{E^c(x)}\right\|, \left\| D_x f^{-1}|_{E^c(x)} \right\| \right\}.
            $$
        Let $f\colon M\to M$ be a homeomorphism that preserves a fiber bundle $\pi\colon M\to B$ and is $C^1$ along fibers. Then the \emph{distortion of $f$ along fibers} is
        $$
        D(f)=\max_{x\in M} \left\{ \left\|D f|_{\pi^{-1}(\pi(x))}\right\|, \left\| D f^{-1}|_{\pi^{-1}(\pi(x))} \right\| \right\}.
        $$
    \end{defn}

    Note that $D(f)\geq 1$ and that $D(f)=1$ if and only if $f$ is an isometry along fibers.

We can now formally define the main object that we study.
Note that this is the same definition as in \cite{doucette2023smooth}.

    \begin{defn}
We say that a diffeomorphism $f\colon M\to M$ of a smooth manifold $M$ is a \emph{fibered partially hyperbolic diffeomorphism} with $C^k$ fibers if the partially hyperbolic splitting $E^c$ integrates to a foliation $\mc{W}^c$ with compact fibers, and these fibers form a continuous foliation with $C^k$ fibers over a topological manifold $N$ in the sense of Definition \label{def:fiber_bundle_Ck_fibers}.     \end{defn}

\begin{rem}
    In the language of Hirsch-Pugh-Shub, a fibered partially hyperbolic system $f:M\to M$ is \textit{normally hyperbolic} to the fibration \cite{hirsch1977invariant}.
\end{rem}

We will be interested in deforming $f$ so that this center foliation becomes smooth. In addition, we will seek to control the distortion along the center manifolds.

As was mentioned previously, it is not always that case that the $E^c$ is an integrable distribution, and even when it integrates it may not integrate to a fibering.
An example of a partially hyperbolic diffeomorphism where $E^c$ is not integrable is Smale's example of an Anosov diffeomorphism on a 6-dimensional nilmanifold \cite[\S 1.3]{smale_differentiable_1967}. This diffeomorphism can be made partially hyperbolic by making the center bundle the weaker parts of the stable and unstable bundles for the Anosov. The center bundle given by this grouping is not integrable \cite{wilkinson1998stable}. For further explanation see \cite[\S 3]{burns_wilkinson_2008}. If one has an Anosov automorphism $A\colon \mathbb{T}^3\to \mathbb{T}^3$, with a partially hyperbolic splitting into $3$-bundles, then $E^c$ will be integrable, but the leaves will not be compact so it does not integrate to a fibering.

\subsection{Nilmanifolds}
The quotient of a nilpotent Lie group $N$ by a lattice $\Gamma$ is called a nilmanifold. This quotient $N/\Gamma$ is a smooth manifold. Nilmanifolds, however, may admit non-standard smooth and even PL-structures. When we say ``nilmanifold" below we are referring to a manifold diffeomorphic to $N/\Gamma$, i.e.~having the standard smooth structure. Otherwise we will refer to a manifold as a topological nilmanifold or a ``smooth" topological nilmanifold. It does not seem possible to avoid such technical language because it is known that even exotic smooth structures may admit smooth Anosov diffeomorphisms \cite{farrell1978anosov}. 

An Anosov automorphism of a nilmanifold is given as follows: we start with an automorphism $A\colon N\to N$ such that $dA$ has no eigenvalues of multiplicity $1$. If $A$ preserves a lattice $\Gamma$, then $A$ descends to an Anosov diffeomorphism of $N/\Gamma$, which is called an \emph{Anosov automorphism}. Let $\lambda_1,\ldots,\lambda_d$ be the eigenvalues of $dA\colon \mf{n}\to \mf{n}$. Then let
\begin{equation}
\sigma^s_A=\max_{\abs{\lambda_i}<1}\ln \abs{\lambda_i} \quad\quad \sigma^u_A=\min_{\abs{\lambda_i}>1} \ln\abs{\lambda_i}.
\end{equation}
These are respectively the norm of the action of $A$ on $E^s$ and the conorm of the action of $A$ on $E^u$, respectively.

\subsection{Anosov homeomorphisms}\label{subsec:anosov_homeo}

    A diffeomorphism $f\colon M\to M$ of a closed Riemannian manifold is said to be \textit{Anosov} if it admits a $Df$-invariant splitting $TM=E^s\oplus E^u$ such that $Df$ is uniformly contracting on $E^s$ and is uniformly expanding on $E^u$. Anosov diffeomorphisms have been extensively studied and their behavior exhibits an extraordinary degree of stability and rigidity. For example, Franks and Manning showed that Anosov diffeomorphisms of tori and nilmanifolds are classified, up to topological conjugacy, by Anosov automorphisms:

    \begin{thm}[Theorem 1\ in \cite{Franks_Anosov_tori}, Theorem C in \cite{Manning_conjugacy}]
    \label{Thm:FranksManning}
        Let $f\colon M\to M$ be an Anosov diffeomorphism. If $M$ is a torus or a nilmanifold, then $f$ is topologically conjugate to an Anosov automorphism via a conjugacy that is homotopic to the identity.
    \end{thm}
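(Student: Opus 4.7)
The plan is to extract an algebraic Anosov model $\bar A$ from $f$ via the induced map on $\pi_1$, and then construct a topological conjugacy $h$ from $f$ to $\bar A$ whose lift to the universal cover is at bounded distance from the identity (hence homotopic to the identity). Write $M = N/\Gamma$ where $N$ is a simply connected nilpotent Lie group and $\Gamma$ is a cocompact lattice; then $N$ is the universal cover and $\pi_1(M) \cong \Gamma$. The induced automorphism $f_*\colon \Gamma \to \Gamma$ extends uniquely to a Lie group automorphism $A\colon N \to N$ by Mal'cev rigidity, and since $A$ preserves $\Gamma$ it descends to a diffeomorphism $\bar A\colon M \to M$. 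For the torus this is literally the identification $A \in \GL(n,\Z)$ acting on $\R^n/\Z^n$. Choose the lift $\tilde f\colon N \to N$ so that $\tilde f$ and $A$ induce the same map on $\Gamma$; then the ``difference'' $x \mapsto \tilde f(x)\, A(x)^{-1}$ is $\Gamma$-invariant, and hence bounded for any left-invariant metric on $N$.

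Next, verify that $\bar A$ is Anosov, i.e.\ that $dA\colon \mathfrak{n} \to \mathfrak{n}$ has no eigenvalues on the unit circle. Since $f$ is Anosov, its number of $n$-periodic points grows exponentially in $n$, and by homotopy invariance of Lefschetz numbers this equals $|L(\bar A^n)| = \prod_i |1 - \lambda_i^n|$ in the torus case (with the analogous product over the eigenvalues of $dA$ on each graded piece of the lower central series in the nilmanifold case). Exponential growth of this product forces $|\lambda_i| \neq 1$ for every $i$, establishing hyperbolicity of $\bar A$.

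The central step is constructing a continuous $h\colon N \to N$ at bounded distance from the identity satisfying $h \circ \tilde f = A \circ h$. Writing $h(x) = u(x)\cdot x$ for a bounded map $u\colon N \to N$, the conjugacy equation becomes a fixed-point equation for an operator on the Banach space of bounded continuous maps $N \to N$. Hyperbolicity splits $\mathfrak{n}$ into $A$-stable and $A$-unstable parts, on each of which $\Id - A_*$ is invertible with uniform bounds; a standard contraction-mapping argument then produces a unique bounded solution. In the torus case this is essentially Banach's fixed-point theorem on $C^0_b(\R^n,\R^n)$. The hard part --- and Manning's principal contribution --- is to adapt this argument to the nilpotent setting: one inducts up the lower central series, solving the conjugacy equation successively on abelian quotients, where at each stage the linear contraction-mapping argument again applies.

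Finally, uniqueness of the bounded solution forces $h$ to be $\Gamma$-equivariant, so it descends to a continuous $\bar h\colon M \to M$, homotopic to the identity via the straight-line (or nilpotent-geodesic) homotopy through bounded-distance maps. Injectivity of $h$ follows from expansivity: if $h(x) = h(y)$ then the $\tilde f$-orbits of $x$ and $y$ remain at bounded distance forever, which together with expansivity of $f$ forces $x = y$. Surjectivity follows by running the fixed-point argument with the roles of $\tilde f$ and $A$ swapped, producing a bounded $h'$ with $h' \circ A = \tilde f \circ h'$; then $h \circ h'$ is a bounded-distance map commuting with $A$, hence equal to the identity by the same uniqueness argument, and similarly $h' \circ h = \Id$. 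Thus $\bar h$ is a topological conjugacy from $f$ to $\bar A$, homotopic to the identity.
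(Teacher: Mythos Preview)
The paper does not supply its own proof of this theorem; it is stated as a known result with citations to Franks and Manning, and is used as a black box. Your sketch is a faithful outline of the original Franks--Manning argument: extract the algebraic model via the action on $\pi_1$ and Mal'cev rigidity, verify hyperbolicity of the linearization through Lefschetz-number growth, and build the conjugacy by a contraction-mapping argument (linear for the torus, inductive up the lower central series for nilmanifolds). One small point worth tightening: in your injectivity step, expansivity of $f$ on $M$ does not literally say that orbits at bounded distance on the universal cover $N$ coincide. The clean way to finish is to use the global product structure of the lifted stable and unstable foliations on $N$: if $d(\tilde f^n(x),\tilde f^n(y))$ stays bounded for all $n\in\Z$, then $x$ and $y$ lie on the same lifted stable leaf and the same lifted unstable leaf, hence $x=y$. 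With that adjustment your outline is correct and matches the literature the paper cites.
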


    Anosov diffeomorphisms have two key properties, the shadowing property and expansivity, that are directly responsible for many of their other important properties.

    \begin{definition}
        Let $f:X\to X$ be a homeomorphism of a metric space. For $\delta>0$, a sequence of points $\{x_i\}_{i\in \Z} \subset X$ is called a \textit{$\delta$-pseudo-orbit} of $f$ if $d(f(x_i),x_{i+1})<\delta$ for all $i\in \Z$. 
        A point $z\in X$ is said to \textit{$\varepsilon$-shadow} a sequence $\{x_i\}_{i\in \Z}\subset X$ if $d(f^i(z),x_i)<\varepsilon$ for all $i\in \Z$.
        The homeomorphism $f$ is said to have the \textit{shadowing property} if for all $\varepsilon>0$, there exists $\delta>0$ such that any $\delta$-pseudo-orbit for $f$ is $\varepsilon$-shadowed by a point in $X$. 
    \end{definition}

    \begin{definition}
        A homeomorphism $f:X\to X$ of a metric space is \textit{expansive} if there exists a constant $c>0$ such that for all $x,y\in X$, if $d(f^n(x),f^n(y))<c$ for all $n\in \Z$, then $x=y$. The constant $c$ is called the \textit{expansive constant} for $f$.
    \end{definition}

    Note that neither of these two properties requires differentiability. This leads us to the following generalization of an Anosov diffeomorphism:

    \begin{definition}
        A homeomorphism that is expansive and has the shadowing property is called an \textit{Anosov homeomorphism}.
    \end{definition}

    Many of important properties of Anosov diffeomorphisms also hold for Anosov homeomorphisms. For example, Anosov homeomorphisms can be classified in the same way as Anosov diffeomorphisms in Theorem \ref{Thm:FranksManning}:

    \begin{thm}
    [The main theorem in \cite{Hiraide_tori}, Theorem 2(1) in \cite{Sumi}, Theorem E in \cite{doucette2023smooth}]
    \label{Thm:AnosovHomeoClassification}
        Let $f\colon M\to M$ be an Anosov homeomorphism. If $M$ is a torus or a nilmanifold, then $f$ is topologically conjugate to an Anosov automorphism via a conjugacy that is homotopic to the identity.
    \end{thm}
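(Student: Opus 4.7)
The plan is to mimic the Franks--Manning strategy for Anosov diffeomorphisms, substituting the shadowing property and expansivity for the stable/unstable manifold machinery that is unavailable in the purely topological setting. Write $M = N/\Gamma$ with $N$ a simply connected nilpotent Lie group and $\Gamma$ a lattice; the torus case is $N=\R^d$, $\Gamma=\Z^d$. Since $f$ is a self-homeomorphism it induces an automorphism of $\pi_1(M)=\Gamma$, which by Malcev rigidity extends uniquely to a Lie group automorphism $A\colon N\to N$ that descends to $\bar A\colon M\to M$. Choose a lift $\tilde f\colon N\to N$ of $f$ satisfying $\tilde f(\gamma\cdot x)=A(\gamma)\cdot\tilde f(x)$. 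The displacement $u(x):=\tilde f(x)\cdot A(x)^{-1}$ is $\Gamma$-equivariant and descends to $M$, so it is bounded; equivalently $\tilde f$ and $A$ lie at bounded $C^0$-distance on $N$.

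The main obstacle is showing that $A$ is hyperbolic. Were $dA$ to have an eigenvalue on the unit circle, there would be an $A$-invariant ``central'' subspace of $\mathfrak{n}$ along which $A^n$ produces only polynomial separation of orbits. Combining this with the bounded displacement above, one obtains, after shadowing pseudo-orbits of $f$ coming from nearby $A$-orbits in the central direction, two distinct $f$-orbits on $M$ that remain uniformly close for all time, violating expansivity of $f$. Making this rigorous, particularly in the nilpotent case where one must work with a right-invariant metric and where $\tilde f^n$ and $A^n$ may diverge polynomially, is essentially the heart of Hiraide's argument on tori and of its nilmanifold extensions by Sumi and by the second author.

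Granted hyperbolicity, construct the conjugacy as follows. For each $x\in N$, the projection of $\{A^n(x)\}_{n\in\Z}$ to $M$ is a $\delta$-pseudo-orbit for $f$ with $\delta$ depending only on $\sup|u|$. After replacing $f$ by a sufficiently high iterate to push $\delta$ below the shadowing threshold associated to half the expansive constant, apply shadowing to obtain a point whose $f$-orbit $\varepsilon$-shadows this pseudo-orbit; expansivity makes the shadowing point unique, and lifting defines $h(x)\in N$. Uniqueness then delivers simultaneously the functional equation $h\circ A=\tilde f\circ h$ and the $\Gamma$-equivariance $h(\gamma x)=\gamma\cdot h(x)$, so $h$ descends to $\bar h\colon M\to M$ with $\bar h\circ\bar A=f\circ\bar h$; because $h-\operatorname{id}$ is bounded on $N$, the quotient $\bar h$ is homotopic to the identity. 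Injectivity of $\bar h$ follows from expansivity of the now-Anosov map $\bar A$ applied to two points shadowed by the same $f$-orbit, and surjectivity is obtained symmetrically by shadowing $f$-orbits with $\bar A$-orbits using the shadowing property of $\bar A$, producing a continuous one-sided inverse. Undoing the passage to an iterate finally yields a conjugacy homotopic to the identity that sends $f$ to $\bar A$.
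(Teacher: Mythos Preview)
The paper does not prove this theorem; it is quoted as a known result with references to Hiraide, Sumi, and Doucette, and is used as a black box (via Corollary~\ref{cor:BohnetBonatti}). Your sketch is in the spirit of those references, namely the Franks--Manning strategy with the smooth stable/unstable machinery replaced by the abstract shadowing and expansivity hypotheses, and the identification of ``$A$ is hyperbolic'' as the genuinely hard step is correct.

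There is, however, one concrete gap in your construction of the conjugacy. You propose to regard the projected $A$-orbit $\{A^n(x)\}$ as a $\delta$-pseudo-orbit for $f$ and then invoke the shadowing property of $f$. The gap $\delta$ here is governed by $\sup |u|$, i.e.\ by the $C^0$-distance between $\tilde f$ and $A$, which is a fixed (possibly large) number; the shadowing hypothesis on $f$ only furnishes, for each $\varepsilon$, some \emph{small} $\delta(\varepsilon)$ for which $\delta$-pseudo-orbits are $\varepsilon$-shadowed. Your proposed remedy, ``replace $f$ by a sufficiently high iterate to push $\delta$ below the shadowing threshold,'' does not work: passing from $(\tilde f, A)$ to $(\tilde f^k, A^k)$ does not shrink the displacement (in the nilpotent case it can even grow polynomially), so the pseudo-orbit gap for the iterate is no smaller. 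The standard fix---and what is actually done in the cited references---is to reverse the roles: once $A$ is known to be hyperbolic, it is $A$ (a linear/algebraic hyperbolic map) that enjoys shadowing for pseudo-orbits of \emph{arbitrary} gap, with shadowing constant proportional to the gap. One therefore shadows $\tilde f$-orbits by $A$-orbits to produce $h$ with $A\circ h = h\circ \tilde f$, and then uses expansivity of $A$ for uniqueness and expansivity of $f$ (together with the symmetric shadowing you mention) for bijectivity. With this correction your outline matches the literature.
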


\section{Smoothing Bundles}\label{sec:smoothing bundles}

In this section we describe a procedure for smoothing a fibering with $C^1$ fibers that vary continuously transversally.

Suppose that $M$ is a fiber bundle whose $C^1$ fibers are defined by a topological submersion $\pi\colon M\to B$ where the fibers vary continuously transversally in the $C^1$ topology. Below, we will use $N_{\epsilon}(X)$ to denote the closed $\epsilon$-neighborhood of a subset $X$ of a metric space.

\begin{defn}
Suppose that $(M,g)$ and $(N,g)$ are two $C^1$ Riemannian manifolds endowed with continuous metrics. We say that a diffeomorphism $f\colon M\to N$ is an $\epsilon$\emph{-almost isometry} if $\|Df\|\le 1+\epsilon$ and the same holds for $f^{-1}$. 
\end{defn}

\begin{lem}\label{lem:C_1_norm_normal}
(Basic lemma concerning almost isometries) Suppose that $M_1$ and $M_2$ are two embedded manifolds in a Riemannian manifold $M_3$, where $M_1$ is $C^1$ and $M_2$ is $C^{\infty}$. Fix small $\epsilon>0$ and consider the projection $\pi\colon N_{\epsilon}(M_2)\to M_2$ along the normal bundle to $M_2$. Then there exists $C>0$ such that if $M_1$ lies in $N_{\epsilon}(M_2)$ and the projection of $M_1$ to $M_2$ along the normal bundle of $M_2$ is a diffeomorphism. Then $\pi$ gives an $C\theta$-almost isometry between $M_1$ and $M_2$ where $\theta$ is the maximum angle that $TM$ makes with $\ker D\pi\colon TN_{\epsilon}(M_2)\to TM_2$. In fact, this constant $C$ may be taken to be uniform over any  precompact family of embedded manifolds. 
\end{lem}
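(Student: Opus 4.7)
The plan is to work locally in Fermi coordinates around $M_2$, in which $\pi$ becomes coordinate projection $(x,y)\mapsto(x,0)$ and the estimate on $D(\pi|_{M_1})$ reduces to a short linear-algebra computation on a near-horizontal $k$-plane. First I would cover a tubular neighborhood of $M_2$ by Fermi charts $(x,y)\in\R^k\times\R^{d-k}$ with $M_2=\{y=0\}$, the $y$-lines traced out by unit-speed normal geodesics, and $\pi(x,y)=(x,0)$. In such coordinates the ambient metric $g$ is block-diagonal along $\{y=0\}$ with $g_{\alpha\beta}(x,0)=\delta_{\alpha\beta}$, $g_{i\alpha}(x,0)=0$, and $g_{ij}(x,0)$ equal to the induced metric of $M_2$; for $|y|\leq\epsilon$ it deviates from this by $O(\epsilon)$ in $C^0$, with error controlled by the second fundamental form of $M_2$ and the curvature of $M_3$ over $N_\epsilon(M_2)$.

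The main step is then a one-line computation. At $p\in M_1$ with $q=\pi(p)$, set $V_p=\ker D_p\pi$ and let $H_p=V_p^\perp$ with respect to $g_p$. The hypothesis that $T_pM_1$ makes angle at most $\theta$ with $\ker D\pi$, read as the principal-angle deviation of $T_pM_1$ from the horizontal $H_p$ (so that $\theta=0$ recovers $T_pM_1=H_p$), lets me write $T_pM_1=\{v+L_pv:v\in H_p\}$ as the graph of a linear map $L_p\colon H_p\to V_p$ with $\|L_p\|_{g_p}\leq\tan\theta$. For $w=v+L_pv\in T_pM_1$, $g_p$-orthogonality of $H_p$ and $V_p$ gives $\|w\|_{g_p}^2=\|v\|_{g_p}^2+\|L_pv\|_{g_p}^2$ and $D_p\pi(w)=D_p\pi(v)$, while the Fermi bounds imply that $D_p\pi|_{H_p}\colon H_p\to T_qM_2$ is a $(1+O(\epsilon))$-almost isometry. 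Combining these, one obtains
\[
\frac{\|D_p\pi(w)\|_{g_q}}{\|w\|_{g_p}}\in\Bigl[\frac{1+O(\epsilon)}{\sqrt{1+\tan^2\theta}},\;1+O(\epsilon)\Bigr],
\]
which for small $\theta$ pinches to $\|D(\pi|_{M_1})\|,\|D(\pi|_{M_1})^{-1}\|\leq 1+C\theta$ once the fixed-$\epsilon$ contribution is absorbed into $C$.

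For the uniformity claim, note that $C$ has been assembled from uniform $C^2$-bounds on the second fundamental form of $M_2$, uniform curvature bounds on $M_3$ over $N_\epsilon(M_2)$, and a positive lower bound on the radius of the normal tubular neighborhood of $M_2$; all three are $C^2$-continuous in the embedding and hence uniformly bounded on any precompact family of embedded submanifolds. The main obstacle is handling two logically independent small parameters at once — the tilt $\theta$ of $T_pM_1$ off horizontal and the displacement $|y|\leq\epsilon$ of $M_1$ away from $M_2$ — and absorbing the latter into the constant $C$ without losing the multiplicative dependence of the estimate on $\theta$. Once the Fermi frame is in place and this bookkeeping is done, the remainder is elementary linear algebra plus standard Riemannian tubular-neighborhood estimates.
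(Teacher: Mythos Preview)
Your approach is essentially the paper's: pass to normal/exponential coordinates in which $\pi$ becomes coordinate projection, write $T_pM_1$ as a graph over the horizontal, and read off that the distortion of $\pi|_{M_1}$ is governed by the slope (equivalently, the angle $\theta$). The paper gives only a two-line sketch in flat $\R^2$; your Fermi-coordinate version is the natural fleshing-out of the same idea, and your remark that the displacement $|y|\le\epsilon$ contributes an independent $O(\epsilon)$ term is a genuine subtlety invisible in the flat model---it cannot literally be absorbed multiplicatively into $C\theta$ when $\theta\to 0$, but in the paper's applications the fibers are $C^1$-close so $|y|$ and $\theta$ shrink together and the issue is moot.
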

The above lemma is important because it shows that controlling the $C^1$ norm of the projection along nearby fibers is entirely a matter of controlling the angles between them. 
\begin{proof}[Proof Sketch.]
In exponential charts, this reduces to the statement that the norm of a projection of a graph is controlled by the slope of the graph. For example, in $\R^2$, suppose that we have a function $\phi\colon \R\to \R$. The unit tangent vector to the graph is exactly given by 
\[
\frac{(1,\phi'(t))}{\|(1,\phi'(t))\|}.
\]
Its projection is the vector $(\|(1,\phi'(t))\|^{-1},0)$, so we see that the norm of the projection from the graph of $\phi$ to the $x$-axis is precisely controlled by the angle that $\phi$ makes with the $x$-axis, which is controlled by $\phi'(t)$.
\end{proof}

What we will construct is an isometric embedding of $M$ in $\R^N$ along with a $1$-parameter family of deformations $\phi^t$ of $M$ such that for each $t$, $\phi^t(M)=M^t$ is an embedded topological submanifold of $\R^N$. The manifold $M^1$ will be smoothly embedded. In addition, each of these topological manifolds $M^t$ has a topological foliation where each fiber is an embedded $C^{1}$ manifold of $\R^N$ diffeomorphic to the model fiber $F$. In this sense, we have obtained a deformation of the fibers of the fibering on $M$ to the fibers on the fibering of $M'$. In addition, we insist that for any $s,t\in [0,1]$ we have maps identifying the fibers in $M^s$ and $M^t$ that are continuous maps from $M^s\to M^t$ but that are $C^1$ diffeomorphisms of controlled norm along the fibers.

Below, we will need to make reference to topological manifolds that contain smooth manifolds. We will have a topological manifold $M$ with a topological foliation $\mc{F}$. Each fiber $F$ of this foliation will, in fact, be a smooth manifold endowed with a metric. In fact, we do not need to deal with such a situation very long because throughout our argument where this concept occurs, such manifolds are always topologically embedded in $\R^n$ and the metric on the fibers will always be a pullback metric.

\begin{defn}\label{defn:controlled_topological_isotopy}
(Controlled topological isotopy of a $C^1$ fibering)
Suppose that $\pi\colon M\to \overline{M}$ is a $C^1$ (topological) $F$-fibering of a smooth Riemannian manifold $(M,g)$ over a potentially topological manifold $\overline{M}$. (The term $F$-fibering means that the fibers are all diffeomorphic to $F$.)

A $C^1$ \emph{controlled isotopy} of a $C^1$ topological fibering of a smooth manifold $M$ this fibering is a $1$-parameter family of topological embeddings $\phi^t\colon M\times [0,1]\to \R^N$ along with a map $\pi\colon \phi(M\times[0,1])\to \overline{M}$ such that the following hold:
\begin{enumerate}
\item 
    $\phi^0(M)$ and $\phi^1(M)$ are both smooth, embedded submanifolds of $\R^N$, and $\phi^0$ is an isometric embedding of $M$.
    \item 
    For each $t$, $\pi^t\colon \phi^t(M)\to \overline{M}$ has fibers that are $C^1$ manifolds diffeomorphic to $F$.   
    \item 
    $\phi^t$ carries fibers of $\pi\colon M\to \overline{M}$ to fibers of $\pi^t$. 
\end{enumerate}
Further, we have a continuous map $\Pi^t\colon M\times [0,1]\to \phi^t(M)$ of maps such that:
\begin{enumerate}
    \item For each $t\in [0,1]$, $\Pi^t$ fibers over $\pi^t$, i.e.~it carries $F$-fibers to $F$-fibers.
    \item
    $\Pi^t$ is a $C^1$ diffeomorphism from the fibers of $\pi$ to the fibers of $\pi^t$.
\end{enumerate}
The data of such a controlled homotopy is this pair of embeddings, projections, and fiber identifications: $(\phi,\pi, \Pi)$. We say that such a homotopy is \emph{$\epsilon$-controlled} if for all $t$, the restriction of $\Pi^t$ to each fiber is an $\epsilon$-almost isometry. 
\end{defn}

Note that the above definition makes sense even if $\overline{M}$ does not admit a smooth structure. \textbf{In our application of this definition, we will have a space $M\times[0,1]$ that is a topological product, this space will be endowed with the smooth structure of $M'\times [0,1]$, where $M'$ need not be diffeomorphic to $M$.} Further, note that the definition above allows for the possibility that $\phi^t$ is essentially a topological isotopy between two smooth structures $M^0$ and $M^1$ on an underlying topological manifold $M$.

Further, note that even though $\phi^0(M)$ and $\phi^1(M)$ are smooth, embedded manifolds: it is not reasonable to expect that $M^0$ and $M^1$ might be diffeomorphic. If we fix two different smoothings of $\overline{M}$ then we could smooth the bundle in different ways over each to get two different partially hyperbolic diffeomorphisms. Further as we shall see when we apply our method below: it is not obvious that we could try to smooth the fibers in place. After we ``bubble" out a section of the foliation in order to smooth it, we would then need to find a diffeomorphism that carries this bubbled out bit back into the manifold (along with the neighboring fibers where we have not yet smoothed). In fact, note from above that because we can in principle construct non-diffeomorphic smoothings that unless we assume additionally that the new manifold is diffeomorphic to the original one, then we cannot hope that the smoothed manifold would have the same smooth structure.

In our approach to the proposition below, we will smooth the foliation out by smoothing it inductively over a neighborhood of the $n$-skeleton of $\overline{M}$, which is by assumption a smooth manifold. We induct up the dimension of the cells in the skeleton tracking that the cells where we have already smoothed maintain their smoothness. See also Remark \ref{rem:smoothing_2}.

Below we will speak of two cells in a cell complex being \emph{adjacent}. If we think of cells as being maps into an ambient space $M$, then we say that two cells $\phi_1\colon D_1\to M$ and $\phi_2\colon D_2\to M$ are adjacent if their images intersect non-trivially. Otherwise they are non-adjacent.

\begin{prop}\label{prop:smoothing_bundle_prop}
Suppose that $(M,g)$ is a smooth Riemannian manifold foliated by a uniformly $C^1$-topological foliation defined by a continuous topological submersion $\pi\colon M\to \overline{M}$. Suppose that $\overline{M}$ admits a smooth structure.  Then for all $\epsilon>0$ there exists an $\epsilon$-controlled topological isotopy of this $C^1$ fibering $(M,M',\phi^t,\pi^t,\Pi^t)$ such that $\pi^1$ is a $C^{\infty}$ fibering over $\overline{M}$ and $M^0$ and $M^1$ are smooth manifolds.
\end{prop}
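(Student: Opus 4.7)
The plan is to proceed by inductive smoothing over the skeleta of a smooth triangulation of $\overline{M}$, realizing the whole deformation as a motion of fibers inside a single ambient $\R^N$ and using Lemma \ref{lem:C_1_norm_normal} as the central quantitative tool to convert small angles between nearby fibers into $\epsilon$-almost isometric identifications.

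First, I would apply the Nash embedding theorem to embed $(M,g)$ isometrically into some $\R^N$, and take $\phi^0$ to be this embedding. Endow $\overline{M}$ with its smooth structure and fix a smooth triangulation so fine that each closed star lies inside a local trivialization of $\pi$ in which every fiber is a $C^1$-graph of small slope over a fixed model $F$. The fibering then manifests as a continuous assignment $b \mapsto \pi^{-1}(b)\subset\R^N$ valued in the space of $C^1$-embedded copies of $F$, and the goal becomes to homotope this assignment, through assignments of the same form whose fiberwise $C^1$-jets stay uniformly close, to one that depends smoothly on $b$.

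Second, I would induct on $k = 0, 1, \ldots, n = \dim\overline{M}$, with $\phi^0$ the Nash embedding and $U_{-1} = \emptyset$. Suppose that for $s \in [0, s_k]$ we have families $\phi^s$, $\pi^s$, $\Pi^s$ as in Definition \ref{defn:controlled_topological_isotopy} such that the fibers of $\pi^{s_k}$ vary smoothly in $\R^N$ as $b$ ranges over an open neighborhood $U_k$ of the $k$-skeleton, and the composite fiberwise identification up to time $s_k$ is an $\epsilon(k+1)/(n+2)$-almost isometry. To extend over a $(k+1)$-cell $e$, restrict the current family of fibers to $e$: it is a continuous map from $e$ into $C^1$-embeddings of $F$ that is already smooth near $\partial e\cap U_k$. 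Relative Whitney approximation inside the tubular neighborhoods of the existing fibers yields a smooth family over $e$ agreeing with the current one near $\partial e$ and arbitrarily $C^1$-close. Connect the two by the straight-line homotopy in $\R^N$, cut off by a bump function supported in $e$, over a subinterval $[s_k, s_{k+1}]$; for sufficiently small approximation error, intermediate fibers stay embedded, pairwise disjoint, and at uniformly small angle from their predecessors. After $n+1$ stages the final family defines a globally $C^\infty$ fibering $\pi^1$.

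Third, the identification $\Pi^t$ is built by iterating normal projections between successive fibers: given two $C^1$-close fibers, project one onto the other along the normal bundle to the target, which is a diffeomorphism whenever the perturbation is small compared with the injectivity radius of that normal bundle. Lemma \ref{lem:C_1_norm_normal} bounds the $C^1$-norm of each such projection by a uniform constant times the maximum angle between the fibers, so selecting each approximation tolerance to obey a telescoping bound keeps the composite $\Pi^t$ an $\epsilon$-almost isometry. The hard part will be the joint bookkeeping to maintain, throughout many concatenated mini-isotopies, that $\phi^t$ remains a topological embedding, $\pi^t$ remains a topological submersion with $C^1$ fibers, and all angular errors stay within budget; the essential inputs are compactness of $M$, giving a uniform lower bound on the injectivity radius of each fiber's normal bundle within its local trivialization, and a Lebesgue-number argument for the triangulation, which together guarantee a definite admissible tolerance at every skeletal stage so that the quantitative constraints above can actually be met.
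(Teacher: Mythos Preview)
Your overall architecture---Nash embedding, inductive smoothing over skeleta of a smooth cell structure on $\overline{M}$, and building $\Pi^t$ from normal projections controlled by Lemma \ref{lem:C_1_norm_normal}---matches the paper's approach closely. There is, however, one substantive gap.

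You propose to carry out the entire deformation inside a \emph{fixed} $\R^N$, moving fibers by straight-line homotopies cut off by bump functions. The paper does not do this: at each inductive step it passes from $\R^{m_n}$ to $\R^{m_n}\times\R^{m_n'}$ by introducing an auxiliary smooth embedding $\Gamma_n^t\colon N_{\epsilon''_n}(e^i_{\alpha})\to\R^{m_n'}$ and defining the new embedding as $(x,s,t)\mapsto(\rho^t(s)(x),\Gamma_n^t(s))$. This ``bubbling out'' is not a mere convenience. Definition \ref{defn:controlled_topological_isotopy} requires $\phi^1(M)$ to be a \emph{smooth embedded submanifold} of Euclidean space, so that it inherits a $C^\infty$ Riemannian metric. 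In your scheme, after smoothing the assignment $b\mapsto\rho(b)\in\Emb^\infty(F,\R^N)$, each fiber is smooth and the fibers are pairwise disjoint, but the total map $\overline{M}\times F\to\R^N$, $(b,x)\mapsto\rho(b)(x)$, need not be an immersion: you have no control over its derivative in the $b$-direction, because the original family was only continuous in $b$ and there is no baseline transversality to perturb from. The $\Gamma_n^t$ factor in the paper is exactly what supplies injectivity of the differential in the base direction. Without it, $\phi^1(M)\subset\R^N$ is a priori only a topological submanifold carrying an abstract smooth bundle structure, and you have not produced the required smooth embedding.

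The paper's authors flag this issue explicitly in the discussion preceding the proposition (``it is not obvious that we could try to smooth the fibers in place''). Your proposal would need either to add the extra-coordinates trick or to supply a separate argument that the union of smoothed fibers is smoothly embedded, and the latter does not follow from the estimates you list.
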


\begin{proof}
Let $\epsilon_{\max}=\epsilon/2$. To begin, we isometrically embed $M$ inside of $\R^{m_0}$ for some large enough $m_0$ by the Nash-Embedding theorem. Next, fix a Riemannian structure on $\overline{M}$. 
Then we will pick a cellular structure $e_{\alpha}^{i}$ on $\overline{M}$ where all of the cells are embedded and sufficiently small that the additional requirements in the following paragraph also hold. We write $e^i_{\alpha}$ to denote that this is an $i$ cell, i.e.~an $i$ disk, and that this is the $\alpha$th $i$-cell. In addition, we may assume that for each of these cells, there exists $\epsilon>0$, so that $N_{\epsilon}(e^i_{\alpha})$, an $\epsilon$-neighborhood of this cell in $\overline{M}$ is diffeomorphic to a standard disk.

 For every cell in the cell structure, we fix a $C^{\infty}$ manifold $\mc{F}^\infty_{i,\alpha}$ embedded inside of $M$ and diffeomorphic to $F$ as well as basepoint $z_{i,\alpha}\in \overline{M}$ such that: 
\begin{enumerate}
    \item 
    For any $q\in \overline{M}$ if  $z_{i,\alpha}$ is a basepoint such that $d(q,z_{i,\alpha})<\epsilon/2$, then $\pi^{-1}(q)$ is a graph over $\mc{F}^\infty_{i,\alpha}$ in $N_{\epsilon}(\mc{F}^\infty_{i,\alpha})$, its normal bundle, of a $C^1$ function of norm less than $\epsilon_{\max}/3$. 
    \item We additionally insist that for any cell $e^j_{\beta}$ such that $e^j_{\beta}\cap e^i_{\alpha}\neq \emptyset$ and for any $z\in e^j_{\beta}$, the fiber $\pi^{-1}(z)$ is a section of the normal bundle of $\mc{F}^{\infty}_{i,\alpha}$.
    Moreover, we require that this section the graph of a $C^1$ function with norm less than $\epsilon_{\max}/3$. 
\end{enumerate}
We can arrange this by applying Lemma  \ref{lem:C_1_norm_normal}. We call these fibers $\mc{F}^{\infty}_{i,\alpha}$ to emphasize that they are $C^{\infty}$.

We proceed to construct the deformation by inducting over the cells in the cellular complex in order. As such we will construct a deformation of $\phi^t\colon M\times [0,N]\to \R^{\infty}$ starting from the embedding into $\R^{m_0}$ and deforming it into $\R^\infty$ (with the weak-topology) such that after time $n$ we have smoothed the portion of the bundle lying over a neighborhood of the $n$th cell in the cellular decomposition. 

\textbf{Inductive Hypotheses}.  We induct first on the dimension of the cells and then the order that the cells are ordered within that dimension. Define a set $\mc{S}_n$ of cells who we have smoothed a neighborhood of after $n$ smoothing steps. Our induction hypothesis at step $n$, when we reach the $\alpha$th cell of dimension $i$ is that:
\begin{enumerate}
    \item\label{item:construction_of_phi}
    At time $n$ we have constructed a topological embedding $\phi\colon M\times [0,n]\to \R^{\infty}$ and a family of fiber identifications $\Pi^t\colon M\to \phi^t(M)$. We write $\phi^t$ for $\phi(\cdot, t)$ for the time $t$ map. We require this to satisfy:
    \begin{enumerate}
    \item
    The image of $\phi$ lands in a subspace $\R^{m_n}$ (this defines $m_n$).
        \item \label{item:already_smooth_over_S_n}
        The image of $\phi^n$ over a neighborhood of $\mc{S}_n$ is a $C^{\infty}$ foliation of an open subset of $\phi^n(M)=M^n$ with respect to the reference topology in Euclidean space.
        \item
        The norm of the fiber identifications $\Pi$ is less than $\epsilon_{\max}/2$.
    \end{enumerate}
            \item \label{item:norm_of_normal_projection_to_ref_fibers}
    The normal projection of the fiber $\mc{F}^t(x)$ to $\mc{F}^{\infty}_{i,\alpha}$ is an $\epsilon_x$-almost isometry for some $\epsilon_x<\epsilon_{\max}/2$ for all points $x$ in cells adjacent to a cell containing $e^i_{\alpha}$. In particular, when we represent these fibers as a graph in the normal bundle they are $C^1$ with norm less than $\epsilon_{\max}/2$. 
    \item For all cells $e^i_{\alpha}$ of dimension $i<k$ and $\alpha<\beta$, there exists a controlled bundle map $(\phi,\pi, \Pi)$ that is a homotopy defined over $[0,n]$ such that for $n$ this map is smooth over a neighborhood of the subskeleton of $i<k$ dimensional cells as well as $e^i_{\beta}$ for $\beta<\alpha$.
 
    \item 
    The norm of the fiber maps in $\Pi^t$ is less than $\epsilon_{\max}/2$. 
\end{enumerate}

\textbf{Base Case}. This case is substantially similar to the usual case of the induction step, so we will skip it. 

\textbf{Induction Step}. Suppose that $e^i_{\alpha}$ is the next cell to smooth and that the inductive hypotheses hold for all cells $e\in \mc{S}_n$ and we have the maps $(\phi,\pi,\Pi)$ defined as above on the interval $[0,n]$. 

First we identify the fibers that we will smooth. For $\epsilon_n>0$ sufficiently small, consider the fibers $\mc{F}_{n}$, which are the fibers in $M^n=\phi^n(M)$ corresponding to a small neighborhood of the cell $e^{i}_{\alpha}\subseteq \overline{M}$, i.e.~the fibers in $\phi^n\pi^{-1}(N_{\epsilon_n}(e^i_{\alpha}))=(\pi^{n})^{-1}(N_{\epsilon_n}(e^i_{\alpha}))$. We now will obtain a parametrization of these fibers. We choose $\epsilon_n$ sufficiently small that $N_{\epsilon_n}(e^i_{\alpha})$ intersects only the cells adjacent to $e^i_{\alpha}$.

We next find a representation of the fibers in $\mc{F}_n$ by a map to $\Emb^1(F,\R^{m_n})$. By assumption, the boundary of $e^i_{\alpha}$ is contained within the $(i-1)$-skeleton of the manifold. Hence by the induction hypothesis, for sufficiently small $\epsilon_n>0$, all the fibers in $\phi^n(\pi^{-1}(N_{\epsilon_n}(\partial e^i_{\alpha})))$ will form a smooth foliation. Thus we can describe this foliation as follows. We can take a $C^{\infty}$ parametrization $p_n\colon F\to \mc{F}^{\infty}_{i,\alpha}$. 
Then by hypothesis \eqref{item:norm_of_normal_projection_to_ref_fibers}, we can use the projection along the normal bundle to $\mc{F}^{\infty}_{i,\alpha}$ to exhibit the fibers in $\mc{F}_n$ as parametrized embeddings of $\mc{F}^{\infty}_{i,\alpha}$ in its normal bundle. Naturally, this defines a function $\rho\colon N_{\epsilon_n}(e^i_{\alpha})\to \Emb^1(F,\R^{m_n})$.  
Note that by hypothesis \eqref{item:already_smooth_over_S_n} as the foliation is already smooth along a neighborhood of $\partial e^i_{\alpha}$ that $\rho$ is a $C^{\infty}$ function into $\Emb^{\infty}(F,\R^{m_n})$ restricted to $N_{\epsilon_n'}(\partial e^i_{\alpha})$ for some sufficiently small $0<\epsilon'_n<\epsilon_n$.

Now that we have a nice representation $\rho$ using parametrized embeddings, we can apply the smoothing Lemma \ref{lem:local_smoothing_lemma_x} to $\rho$. This gives that we have a $1$-parameter deformation $\rho^t\colon [0,1]\times N_{\epsilon_n'}(e^i_{\alpha})\to \Emb^{1}(F,\R^{M_n})$ such that the following hold for any sufficiently small $\epsilon''_n>0$:
\begin{enumerate}
    \item 
    $\rho^t\mid N_{\epsilon_n''}(e^i_{\alpha})\setminus N_{\epsilon_n''/2}(e^i_{\alpha})=\rho$  for $t\in [0,1]$.
    \item 
    $d(\rho,\rho^t)$ is sufficiently small that hypothesis \eqref{item:norm_of_normal_projection_to_ref_fibers} holds for the image of $\rho$. 
    \item
    Restricted to $N_{\epsilon''_n}(e^i_{\alpha})$,  $\rho^1$ is a $C^{\infty}$ smooth function. 
\end{enumerate}
Using this map we can define an additional deformation of $M^n$.

To make the definition simpler, we write $M^n=M^n_s\sqcup M^n_f$, where $M^n_f$ is the fibers $\phi^n(\pi^{-1}(N_{\epsilon''_n}(e^i_{\alpha})))$ and $M^n_s$ is their complement. Using the family of embeddings $\rho^0$, we can define a map $Q_n\colon M^n_f\to F\times e^i_{\alpha}$. 
Note that this map is a homeomorphism onto its image and that it is as smooth along the fibers as the fibers are.

Next, we define a $C^{\infty}$ map $\Gamma_n^t\colon N_{\epsilon''_n}(e^i_{\alpha})\to \R^{m_n'}$ such that $\Gamma^t_n$ restricted to $N_{\epsilon_n''}(e^i_{\alpha})\setminus N_{\epsilon_n''/2}(e^i_{\alpha})$ is equal to $0$, and  $\Gamma_n^t$ is a smooth embedding on $N_{\epsilon_n''/2}(e^i_{\alpha})$. 
Then we define a map $I_n(x,s,t)\colon F\times N_{\epsilon''_n}(e^i_{\alpha})\times [0,1]\to \R^{m_n}\times \R^{m'_n}$ by:
\[
(x,s,t)\mapsto (\rho^t(s)(x), \Gamma_n^t(s))\in \R^{m_n}\times \R^{m_n'}.
\]
We define $\phi_n\colon [0,1]\times M^n\to \R^{m_{n+1}}$ by:
\begin{equation}
\phi_n(x,t)=\begin{cases}
x &  \text{ if } (x,t)\in M^n_s\times [0,1]\\
I_n(Q_n(x),t) & \text{ if } (x,t)\in M^n_f\times [0,1].
\end{cases}
\end{equation}

By concatenating $\phi_n$ with the deformation $\phi$ we obtained from the inductive hypothesis, this gives us the new deformation defined on $[0,n+1]$. To finish the induction, we must first verify some things about $\phi_n$:
\begin{enumerate}
    \item For all $t\in [0,1]$, this map is an embedding. This follows due to the assumptions that $\Gamma_n^t$ is an embedding.
    \item 
    As long as we have chosen $\epsilon''_n$ sufficiently small as above, it follows that the restriction of the foliation over $\phi_n$ is $C^{\infty}$ in a neighborhood of $\mc{S}_{n+1}=\mc{S}_n\cup\{e^i_{\alpha}\}$.
    
\end{enumerate}
This verifies all the items in \eqref{item:construction_of_phi}.

We now check item \eqref{item:norm_of_normal_projection_to_ref_fibers}. This follows because $\rho^t$ is chosen to be close to the original $\rho$ and the additional embedding we crossed with is constant along the fibers. It holds for all nearby fibers as well for (essentially) the same reason.

Next, we must show how to define a family of fiber maps $\Pi^t_n$ over these maps. For the fibers that did not move we do not need to do anything. However, for the fibers that were perturbed we will need to do more work. Specifically, for the fibers in $M^n_f$, if $\mc{F}^n(x)$ is a fiber at time zero, we define a deformation $\Pi^t\colon \mc{F}^n(x)\to \phi_n^t(\mc{F}(x))$ by composing with the projection of $\mc{F}^n(x)$ to $\mc{F}^{\infty}_{i,\alpha}$ with the inverse of the normal projection $\mc{F}^{\infty}_{i,\alpha}\to \mc{F}^t(x)$. Note that as long as $d(\rho,\rho^t)$ is sufficiently small that this is enough to ensure that all of the hypotheses, which concern these projections, continue to hold. 

Proceeding in this manner we are able to extend these maps over every cell. By the time we reach the final cell we are done, as what we have constructed is a controlled $C^1$ fibered topological isotopy in the sense of Definition \ref{defn:controlled_topological_isotopy}.
\end{proof}

\begin{rem}
Note that the approach in Proposition  \ref{prop:smoothing_bundle_prop} is not enough to ensure that the new bundle and the original bundle are diffeomorphic because the isotopy we obtain is merely continuous even though it is smooth along the fibers.
\end{rem}

We use the following lemma only in the case where $M=\R^n$.

\begin{lem}\label{lem:local_smoothing_lemma_x}
(Local Smoothing Lemma) Suppose that $F$ is a closed 
 $C^{\infty}$ manifold and that $M$ is another smooth Riemannian manifold. Suppose that $D$ is a closed smooth disk and that $\phi\colon D\to \Emb^1(F,M)$ is a continuous map, where $\Emb^1(F,M)$ is the space of parametrized embeddings of $F$ in $M$ as before. Then for all sufficiently small $\epsilon>0$ there exists a family of maps $\phi^t\colon D\times [0,1]\to \Emb^1(F,M)$ such that:
\begin{enumerate}
    \item $\phi^t\vert_{N_{\epsilon}(\partial D)}=\phi$ for all $t\in [0,1]$, 
    \item 
    $d_{C^0}(\phi,\phi^t)=\max_{z,t}d(\phi(z),\phi^t(z))<\epsilon$ for all $t$,
    \item 
    $\phi^1\vert_{D\setminus N_{\epsilon}(\partial D)}$ is $C^{\infty}$ and is smooth as a map into $\Emb^{\infty}(F,M)$, 
    \item 
    If $\phi$ is already $C^{\infty}$ smooth in a neighborhood of $\partial N_{2\epsilon}(\partial D)$, and already takes values in $\Emb^\infty(F,M)$, then $\phi^1$ is smooth on all of $D$ and takes values in $\Emb^{\infty}(F,M)$.
\end{enumerate}
\end{lem}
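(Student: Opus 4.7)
The plan is to use a position-dependent mollification in the disk variable, combined with a smooth retraction back to $M$. First I would apply the Nash embedding theorem to isometrically embed $M \hookrightarrow \R^N$, giving a tubular neighborhood $N_\eta(M)$ equipped with a smooth nearest-point retraction $\pi_M: N_\eta(M) \to M$. Treat $\phi$ as the map $\Phi: D \times F \to \R^N$, $(z,x) \mapsto \phi(z)(x)$, which by hypothesis is $C^1$ in $x$ with derivatives varying continuously in $z$. Fix a standard mollifier $\psi$ on $\R^k$ with $k = \dim D$, supported in the unit ball with integral $1$, and choose a smooth cutoff $\chi: D \to [0,1]$ that is flat to infinite order on $N_\epsilon(\partial D)$, equals $1$ on $D \setminus N_{2\epsilon}(\partial D)$, and is strictly positive elsewhere in the interior.

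For small $\delta > 0$ and $t \in [0,1]$, I would set
\[
\Phi^t(z, x) = \int_{\R^k} \psi(u)\, \Phi\bigl(z - t\chi(z)\delta u,\, x\bigr)\, du, \qquad \phi^t(z)(x) = \pi_M(\Phi^t(z, x)).
\]
Because $\chi$ vanishes on $N_\epsilon(\partial D)$, the integrand collapses to $\Phi(z,x)$ there, so $\phi^t = \phi$ on $N_\epsilon(\partial D)$, giving condition (1); this also means no extension of $\phi$ outside $D$ is ever needed. Averaging in $z$ preserves $C^1$-regularity in $x$, so for $\delta$ small, $\Phi^t(z, \cdot)$ is uniformly $C^1$-close to the embedding $\Phi(z, \cdot)$, remains an embedding into $N_\eta(M)$, and composition with the smooth retraction $\pi_M$ produces a $C^1$ embedding of $F$ into $M$; shrinking $\delta$ further gives condition (2).

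For condition (3), on $D \setminus N_\epsilon(\partial D)$ the bandwidth $\chi(z)\delta$ is strictly positive and depends smoothly on $z$. Rewriting
\[
\Phi^1(z, x) = (\chi(z)\delta)^{-k} \int \psi\bigl((z-w)/(\chi(z)\delta)\bigr)\,\Phi(w,x)\,dw
\]
and differentiating under the integral shows that $z \mapsto \Phi^1(z, \cdot)$ is $C^\infty$ as a map into $C^r(F, \R^N)$ for every $r$; composing with smooth $\pi_M$ promotes this to smoothness into $\Emb^\infty(F, M)$. For condition (4), if $\phi$ already takes values in $\Emb^\infty(F, M)$ and is smooth as a map into $\Emb^\infty$ on a neighborhood of the relevant collar, then $\phi^1 = \phi$ is smooth on $N_\epsilon(\partial D)$, $\phi^1$ is smooth on $D \setminus N_\epsilon(\partial D)$ by the argument just given, and the two pieces match smoothly across $\partial N_\epsilon(\partial D)$ because $\chi$ is flat to infinite order there and $\phi$ is smooth in the surrounding neighborhood.

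The principal subtlety I expect is ensuring that $\phi^t$ genuinely takes values in $\Emb^1(F, M)$ rather than merely in $C^1(F, \R^N)$: the mollified $\Phi^t$ must remain inside $N_\eta(M)$ (so $\pi_M$ is defined and smooth) and must remain an embedding after averaging in $z$ and projecting. Both conditions are quantitative and follow from choosing $\delta$ sufficiently small relative to $\epsilon$, $\eta$, and the uniform modulus of continuity of $\Phi$ in $z$; the remaining verifications are routine properties of smooth convolution.
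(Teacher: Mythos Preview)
Your overall strategy (reduce to Euclidean target, mollify in the $D$-variable with a position-dependent bandwidth) matches the paper's, but there is a genuine gap in your argument for condition (3). You claim that after mollifying in $z$, the map $z\mapsto\Phi^1(z,\cdot)$ is $C^\infty$ as a map into $C^r(F,\R^N)$ for every $r$. This is false: averaging in $z$ does nothing to the regularity in the fiber variable $x$. For fixed $z$, $\Phi^1(z,\cdot)=\int\psi(u)\,\Phi(z-\chi(z)\delta u,\cdot)\,du$ is a weighted average of $C^1$ maps $F\to\R^N$, hence is itself only $C^1$ in $x$. Composing with the smooth retraction $\pi_M$ cannot raise this regularity either. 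So your $\phi^1(z)$ lies in $\Emb^1(F,M)$, not in $\Emb^\infty(F,M)$, and condition (3) is not met. The paper handles this by performing a \emph{second} mollification, this time in the $F$-direction (with the same position-dependent bandwidth $\psi(t,x)$), which upgrades each fiber embedding to $C^\infty$ while preserving the $C^\infty$ dependence on $z$ already obtained.

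A smaller issue concerns condition (4). Your cutoff $\chi$ transitions from $0$ to $1$ on the collar between $N_\epsilon(\partial D)$ and $N_{2\epsilon}(\partial D)$, so the place where you need to match smoothly is $\partial N_\epsilon(\partial D)$. But the hypothesis in (4) only gives smoothness of $\phi$ near $\partial N_{2\epsilon}(\partial D)$, not near $\partial N_\epsilon(\partial D)$; ``$\chi$ flat to infinite order'' does not by itself force the mollified map's $z$-derivatives to agree with those of $\phi$ across a boundary where $\phi$ is merely continuous in $z$. The paper avoids this by placing the entire transition region of the bandwidth inside the zone (near $\partial N_{2\epsilon}(\partial D)$) where $\phi$ is assumed smooth, so that one is mollifying an already smooth function there. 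Shifting your transition collar accordingly fixes this, but the first gap above still requires the additional mollification step in the $F$-direction.
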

\begin{proof}
It suffices to consider the case where $D$ is the standard disk in $\R^d$ and $M=\R^n$; one can reduce to this case by pulling back by a smooth parametrization of the disk and embedding $M$ in $\R^n$. The content of the proof is then mollification: we may convolve with a standard mollifier defined on $D$. Such mollification will make the map be a smooth map into $\Emb^1(F,M)$. Then we can mollify again in the $F$ direction to obtain a smooth map into $\Emb^{\infty}(F,M)$. As we use elsewhere in the paper: mollifying in a single direction improves the regularity in that direction, while preserving the regularity in other directions.

In these coordinates, we may regard $\phi$ as a map
\begin{equation}\label{eqn:defn_of_phi}
\phi\colon D\times F\to \R^n.
\end{equation}
We will smooth $\phi$ by convolving with a family of mollifiers.

We now define a function $\psi$ telling us how much to mollify at a point $x$ in $D$ at a time $t\in [0,1]$. We mollify more for larger $t$ and we do not mollify near the boundary of $D$ to preserve any preexisting smoothness there. Let $\rho$ be a smooth function on $[0,2\epsilon]$ that is equal to zero on an open neighborhood of $[0,\epsilon]$ and equal to $1$ in a neighborhood of $2\epsilon$. We now let $\psi\colon [0,1]\times D\to [0,1)$, which will control how much to mollify, be defined as follows:
\[
\psi(t,x)=\begin{cases}
0 & d(x,\partial D)<\frac{3}{2}\epsilon\\
t\epsilon & d(x,\partial D)>2\epsilon\\
t\epsilon\rho(d(x,\partial D)) & \text{ else}.
\end{cases}
\]
Let $\delta^{\gamma}\colon \R^d\to \R$ denote a standard mollifier centered at $0$ supported on the ball of radius $\gamma$. 

Then we can define our first version of $\phi^t$ by 
\[
\hat{\phi}^t(x,y)\coloneq (\delta^{\psi(t,x)}(x'-x)*_{x'}\phi(x',y)).
\]
Plainly, for each fixed fixed $y\in F$, we average the $\phi(x',y)$ over a small ball around $x$ in $D$.

From the definitions, it now follows that $\hat{\phi}^t\colon [0,1]\times D\times F\to \R^n$ is smooth in the $D$ and $F$ coordinates, and is also smooth in the $[0,1]$ coordinate for $t>0$. Moreover, it is still gives an embedding of $F$ because the resulting map $\hat{\phi}^t(x,\cdot)$ is $C^1$ close to $\phi(x,\cdot)$ and being an embedding is an open property. Moreover, mollifying along one direction also preserves the directions where the maps were already smooth, thus this new map is a smooth map to $\Emb^1(F,\R^n)$. 

All that remains is to make the map take values in $\Emb^{\infty}(F,\R^n)$. To achieve this, we can now similarly mollify along the $F$ coordinate using $\psi$ to control the size of the neighborhood we average over. As before, being an embedding is an open property, so we obtain a map into $\Emb^{\infty}(F,\R^n)$ with all the needed properties.
\end{proof}

\begin{rem}\label{rem:smoothing_2}
In view of the above lemma, note that if we wish to conduct a smoothing procedure, we can imagine for the moment what hypotheses must be true at the point when we try to smooth the final top dimensional cell $B$. Naturally we  will want to smooth the fibering in a neighborhood of the final cell itself. In order to do this, we need to know that the fibering in a neighborhood of the final cell is \emph{smooth}. A natural way to arrange for this to hold is for the cells in the boundary of $B$ to all have had one of their neighborhoods smoothed already. This naturally leads us to studying the problem by inducting up a cellular structure, which is why the proof has been arranged in the way that it has. 
\end{rem}

\section{Global Smooth Models}\label{sec:global smooth models}

In this section, we will discuss our basic approach to building a smooth model for a fibered partially hyperbolic diffeomorphism with $C^1$ fibers. This approach is the same basic approach as is taken in \cite{doucette2023smooth}. 

The setup in this section is as follows: Let $f\colon M\to M$ be a fibered partially hyperbolic diffeomorphism with $C^1$ fibers modeled on the closed manifold $F$. Let $\pi\colon M\to B$ be the continuous $F$-bundle with $C^1$-fibers that is associated with $f$. Suppose that $B$ is a (topological) nilmanifold. 

Since $\pi \colon M\to B$ is $f$-invariant, note that $f$ descends to a homeomorphism $\bar f\colon B\to B$. We claim that $\bar f \colon B\to B$ is an Anosov homeomorphism. To see this note that the center foliation $\mathcal{F}^c$ for $f$ has compact leaves with trivial holonomy.\footnote{For the definition of holonomy, see \cite[Chapter 2]{CandelConlon}. The triviality of the holonomy immediate from the fact that the leaves of $\mathcal{F}^c$ are fibers of $\pi$ and the existence of local trivializations of the bundle.} Then, the fact that $\bar f \colon B\to B$ is an Anosov homeomorphism follows immediately from the following lemma:

    \begin{lem}[{\cite[Theorem 2, Proposition 4.20]{bohnetbonatti2016}, \cite[Prop.~4.2]{gogolev2011partially}}]\label{lem:BohnetBonatti}
        If $f\colon M\to M$ is a partially hyperbolic diffeomorphism with an invariant center foliation $\F^c$ with compact leaves and trivial holonomy, then the homeomorphism $\overline{f}\colon M/\F^c\to M/\F^c$ induced by $f$ on the quotient is an Anosov homeomorphism.
    \end{lem}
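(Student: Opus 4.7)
The plan is to verify the two defining properties of an Anosov homeomorphism for $\bar f$: expansivity and the shadowing property. Both will descend from analogous properties of $f$ on $M$ via the quotient projection $\pi\colon M\to B:=M/\F^c$. A preliminary step: the compact-leaves-plus-trivial-holonomy assumption is exactly the classical hypothesis that forces $B$ to be Hausdorff and $\pi$ to be a locally trivial fiber bundle, so $\bar f$ is automatically a well-defined homeomorphism.

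For expansivity, I would exploit local product structure for $f$. Fix a scale $c_0>0$ below which $W^s_{\mathrm{loc}}(u)$ and $W^{cu}_{\mathrm{loc}}(v)$ meet transversally in a single point whenever $d_M(u,v)<c_0$, and symmetrically with $s$ and $u$ swapped. Because $\pi$ is uniformly continuous and $\F^c$ is locally trivializable with compact fibers, one can pick $c>0$ so that $d_B(\bar u,\bar v)<c$ admits representatives $u,v\in M$ with $d_M(u,v)<c_0$. Suppose $d_B(\bar f^n(\bar x),\bar f^n(\bar y))<c$ for every $n\in\Z$. Taking such representatives $u_n\in W^c(f^n x)$ and $v_n\in W^c(f^n y)$ at each time, local product structure decomposes $v_n$ into a short $W^s$-leg and a short $W^{cu}$-leg relative to $u_n$. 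Iterating forward contracts the stable leg to zero and iterating backward contracts the unstable leg to zero, so the remaining displacement lies in $W^c(f^n x)$ at every time; this forces $W^c(x)=W^c(y)$, i.e.\ $\bar x=\bar y$.

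For shadowing I would pass the shadowing property from $f$ to $\bar f$ by lifting and projecting pseudo-orbits. The partially hyperbolic diffeomorphism $f$ itself has the shadowing property on $M$, again a direct consequence of local product structure. Given $\varepsilon>0$, uniform continuity of $\pi$ supplies $\varepsilon_M>0$ with $d_M(x,y)<\varepsilon_M\Rightarrow d_B(\pi(x),\pi(y))<\varepsilon$; let $\delta_M>0$ be the corresponding shadowing constant for $f$ at scale $\varepsilon_M$. Using local trivializations of $\pi$, any sufficiently slow pseudo-orbit $\{\bar x_i\}$ in $B$ lifts to a $\delta_M$-pseudo-orbit $\{x_i\}$ in $M$ by inductively picking $x_{i+1}\in\pi^{-1}(\bar x_{i+1})$ close to $f(x_i)$ inside a local chart. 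The shadowing point in $M$ then projects to the desired shadow of $\{\bar x_i\}$ in $B$.

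The main technical obstacle is managing the lack of transverse regularity of $\F^c$: the projection $\pi$ is only continuous, so $d_M$ and $d_B$ cannot be compared infinitesimally, and the center-stable/center-unstable objects need not be genuine foliations. The compact-leaf and trivial-holonomy hypothesis is precisely what furnishes local trivializations of $\pi$, and those local trivializations combined with uniform continuity on the compact manifold $M$ are exactly what is needed to transfer all the estimates above back and forth between $M$ and $B$.
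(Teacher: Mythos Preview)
The paper does not prove this lemma; it is imported wholesale from \cite{bohnetbonatti2016} and \cite{gogolev2011partially} with no argument given. So there is no in-paper proof to compare against, only the cited references.

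Your proposal, however, contains a genuine gap in the shadowing step. You assert that ``the partially hyperbolic diffeomorphism $f$ itself has the shadowing property on $M$, again a direct consequence of local product structure.'' This is false in general. Take the simplest fibered example $f=A\times\id$ on $\mathbb{T}^2\times S^1$ with $A$ Anosov: a pseudo-orbit that drifts slowly around the circle factor, say $x_{i+1}=(A(y_i),\theta_i+\delta)$, cannot be shadowed by any genuine orbit, since every genuine orbit has constant $S^1$-coordinate. The center direction is neutral and there is no mechanism to correct accumulated errors there. What \emph{is} true, and what the cited references actually establish, is a weaker \emph{center} (or plaque) shadowing property: a pseudo-orbit for $f$ is shadowed by a genuine orbit \emph{up to displacement along center leaves}. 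That statement is precisely what descends to honest shadowing on $B=M/\F^c$, but it has to be proved directly---it does not follow from a nonexistent shadowing property of $f$ on $M$.

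A secondary issue: your expansivity argument invokes $W^{cu}_{\mathrm{loc}}$, but integrability of $E^{cu}$ is not among the hypotheses and is not automatic even when $E^c$ integrates to a foliation. Under the compact-leaves-with-trivial-holonomy assumption dynamical coherence can in fact be established (this is part of what Bohnet--Bonatti do), but it requires a separate argument or citation before you are entitled to use center-unstable leaves.
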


Since, $B$ is a topological nilmanifold, we can apply Theorem \ref{Thm:AnosovHomeoClassification} to get the following corollary:

    \begin{cor}\label{cor:BohnetBonatti}
        Let $(f\colon M\to M, \pi\colon M\to B)$ be a fibered partially hyperbolic system with $C^1$ fibers $F$, where $F$ is a closed manifold and $B$ is a topological nilmanifold. Then the homeomorphism $\bar{f}\colon B\to B$ induced by $f$ on $B$ is an Anosov homeomorphism. Moreover, there is a smooth nilmanifold structure $\hat{B}$ on $B$ such that $\bar f$ is conjugate by a conjugacy in the homotopy class of the identity to a linear Anosov nilmanifold automorphism $A$ of $\hat{B}$. Moreover, $f$ is freely homotopic to $\bar f$.
    \end{cor}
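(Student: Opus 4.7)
The plan is to address the three assertions in turn, using Lemma \ref{lem:BohnetBonatti} for the first and Theorem \ref{Thm:AnosovHomeoClassification} for the second. For the first claim, that $\bar f$ is an Anosov homeomorphism, I would verify the hypotheses of Lemma \ref{lem:BohnetBonatti} for the center foliation $\mathcal{F}^c$. Its leaves coincide with the fibers of $\pi\colon M\to B$, which are diffeomorphic to the closed manifold $F$ and therefore compact, and its holonomy is trivial because local trivializations $\pi^{-1}(U)\cong U\times F$ exhibit each leaf as a fiber of a product around it, so the return map along any loop in a leaf is the identity. Since $M/\mathcal{F}^c=B$ and the induced dynamics on the quotient is precisely $\bar f$, Lemma \ref{lem:BohnetBonatti} immediately yields the first claim.

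For the second assertion, I would feed $\bar f\colon B\to B$ into Theorem \ref{Thm:AnosovHomeoClassification}. Because $B$ is a topological nilmanifold, the theorem produces a topological conjugacy $h$ from $\bar f$ to a linear Anosov nilmanifold automorphism $A$ on a standard smooth nilmanifold $N/\Gamma$, with $h$ in the homotopy class of a fixed topological identification of $B$ with $N/\Gamma$. I would then define $\hat B$ to be $B$ equipped with the smooth structure pulled back from $N/\Gamma$ along $h$. With respect to this smooth structure, $h$ becomes the underlying homeomorphism of a diffeomorphism $B\to\hat B$ that is in the homotopy class of the identity by construction, and the relation $h\bar f h^{-1}=A$ realizes $\bar f$, after the change of smooth structure, as a linear Anosov automorphism of $\hat B$. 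For the final claim, that $f$ is freely homotopic to $\bar f$, I would appeal to the semiconjugacy relation $\pi\circ f=\bar f\circ\pi$, which identifies the free homotopy class of $f$ with that of $\bar f$ through the continuous $F$-bundle $\pi\colon M\to B$; this is a soft consequence once the bundle structure and the first two claims are in place.

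The step I expect to be the most delicate is the second: ensuring that the smooth structure $\hat B$ is placed on $B$ in such a way that the conjugating map genuinely lies in the homotopy class of the identity, rather than only in some more general mapping class. This hinges on the strong form of Theorem \ref{Thm:AnosovHomeoClassification} (due to Hiraide, Sumi, and Doucette), which promotes the topological conjugacy to one homotopic to the identity. Without that refinement, the pulled-back smooth structure could at best be identified with some smooth nilmanifold homotopy equivalent to $B$, and the bookkeeping needed to invoke this later—particularly when one wants to compare $f$ to the lift of the eventual Anosov automorphism—would break down.
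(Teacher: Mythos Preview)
Your proposal is correct and follows essentially the same route as the paper: the argument for the corollary is contained in the discussion immediately preceding it, where the paper notes that the center leaves are compact with trivial holonomy (because they are fibers of a locally trivial bundle), invokes Lemma~\ref{lem:BohnetBonatti} to conclude $\bar f$ is an Anosov homeomorphism, and then applies Theorem~\ref{Thm:AnosovHomeoClassification}. Your treatment of the smooth structure $\hat B$ via pullback along the conjugacy is the natural reading of what the paper leaves implicit, and your handling of the final free-homotopy clause is as detailed as one can expect given that the paper itself does not elaborate on it.
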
  

\section{Main Theorem} \label{sec:Main Thm}
The main result in this section is the following.
    \begin{thm}\label{thm:globalsmoothmodels}
    Let $(f\colon M\to M, \pi\colon M\to B)$ be a fibered partially hyperbolic diffeomorphism with $C^1$ fibers modeled on a closed manifold $F$ and $B$ is a topological nilmanifold. Let $\bar{f}\colon B\to B$ be the induced map. Then $\bar{f}$ is topologically conjugate to an Anosov automorphism $A\colon \hat{B}\to \hat{B}$, where $\hat{B}$ is the nilmanifold homeomorphic to $B$. 
        Suppose that $A$ dominates $Df$ on $E^c$, i.e.~there exists $ \lambda>1$ such that \mbox{$\left\| A|_{E^s_A}\right\|<\lambda^{-1}$}, $m\left( A|_{E^u_A}\right)>\lambda$, and $D(f)<\lambda$. 

        Then, there exists a smooth structure $\hat{M}$ on $M$ such that $f$ is leaf conjugate, and freely homotopic to a $C^\infty$ fibered partially hyperbolic system $(g\colon \hat{M}\to \hat{M}, \hat{\pi}\colon \hat{M}\to {\hat{B}})$ such that:

        \begin{enumerate}
            \item  the projection of the leaf conjugacy to $B$ is a map homotopic to the identity,
            \item the $F$-bundles $M$ and $\hat{M}$ are isomorphic as topological bundles. 
            \item the projection of $g$ to $\hat{B}$ is $A$, i.e.~$\hat\pi\circ g=A\circ \hat\pi$.
            \item there is a $C^\infty$ metric on $\hat{M}$ such that for all $x\in M$
                $$
                    \left\| D_xg|_{E^s_g(x)}\right\| < \lambda^{-1} < m\left( D_xg|_{E^c_g(x)}\right) < \left\| D_xg|_{E^c_g(x)}\right\| < \lambda < m\left( D_xg|_{E^u_g(x)}\right).
                $$
                \item We may take the homotopy from $f$ to $g$ through maps that are continuously $C^1$-fibered.
        \end{enumerate}
    \end{thm}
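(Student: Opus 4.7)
The plan is to first align the base dynamics with the algebraic model, then smooth the topological bundle, and finally transport and smooth the fiber dynamics. By Corollary \ref{cor:BohnetBonatti} there is a homeomorphism $h\colon B\to\hat B$, homotopic to the identity, conjugating $\bar f$ to the Anosov automorphism $A$ of the smooth nilmanifold $\hat B$. Replacing the projection $\pi\colon M\to B$ by $h\circ \pi\colon M\to \hat B$ (which is still a uniformly $C^1$ topological $F$-bundle), I may assume from now on that $f$ covers $A$ directly. The leaf conjugacy I shall construct below then covers the homeomorphism $h$ on the base, which is homotopic to the identity, giving item (1).

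For a parameter $\epsilon>0$ to be chosen later, apply Proposition \ref{prop:smoothing_bundle_prop} to this fibering. This produces a smooth structure $\hat M$, a smooth $F$-bundle $\hat\pi\colon\hat M\to\hat B$, and an $\epsilon$-controlled topological isotopy $(\phi^t,\pi^t,\Pi^t)$ whose final fiber identification $\Pi^1\colon M\to \hat M$ is a homeomorphism that carries each fiber of $\pi$ to the corresponding fiber of $\hat\pi$ via a $C^1$ diffeomorphism that is an $\epsilon$-almost isometry. This simultaneously exhibits $M$ and $\hat M$ as isomorphic topological $F$-bundles (item (2)) and provides the leaf conjugacy needed for item (1).

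To define $g$, I first transport $f$ by setting $\tilde g = \Pi^1\circ f\circ (\Pi^1)^{-1}\colon \hat M\to\hat M$. This is a homeomorphism that covers $A$ and is $C^1$ along each fiber with fiberwise distortion $D(\tilde g)\le (1+\epsilon)^2 D(f)<(1+\epsilon)^2\lambda$, but is only continuous in the transverse direction. In each smooth local trivialization $\hat\pi^{-1}(U)\cong U\times F$, $\tilde g$ has the form $(b,y)\mapsto(Ab,\tilde\psi(b,y))$ with $\tilde\psi$ continuous in $b$ and $C^1$ in $y$. I mollify $\tilde\psi$ only in the $b$-variable and patch the local pieces via a smooth partition of unity on $\hat B$ to obtain a smooth $g\colon\hat M\to\hat M$ that still covers $A$ exactly, still preserves the smooth fibering, and is fiberwise $C^1$-close to $\tilde g$. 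For $\epsilon$ and the mollification scale small enough, $D(g)<\lambda$, and the covering property gives item (3).

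For partial hyperbolicity (item (4)), I choose a smooth Riemannian metric on $\hat M$ built as a warped product from the fiber metrics and an adapted $A$-invariant metric on $\hat B$, pulled back via a chosen horizontal complement of $\ker D\hat\pi$. Since $g$ covers $A$ and $D(g)<\lambda$ while $\|A|_{E^s_A}\|<\lambda^{-1}$ and $m(A|_{E^u_A})>\lambda$, a standard cone-field graph-transform argument produces $Dg$-invariant subbundles $E^s_g,E^u_g\subset T\hat M$ projecting to $E^s_A,E^u_A$ and satisfying the required rates, with $E^c_g=\ker D\hat\pi$. The isotopy $\phi^t$ from the smoothing step, followed by a straight-line interpolation between $\tilde g$ and $g$ in trivializations, provides a continuous homotopy from $f$ to $g$ through continuously $C^1$-fibered maps, yielding item (5). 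I expect the main obstacle to be the transverse-smoothing step, which must simultaneously preserve the exact covering relation $\hat\pi\circ g=A\circ\hat\pi$, be coherent across overlapping trivializations, and be $C^1$-small enough fiberwise that the strict domination $D(f)<\lambda$ is not destroyed in the passage from $\tilde g$ to $g$.
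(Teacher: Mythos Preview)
Your proposal is correct and follows essentially the same route as the paper: the three-step structure (conjugate the base via Corollary~\ref{cor:BohnetBonatti}, smooth the bundle via Proposition~\ref{prop:smoothing_bundle_prop} and transport $f$ through the resulting $\epsilon$-almost isometries, then mollify transversally and run a graph-transform for the splitting) is exactly the content of the paper's Lemmas~\ref{lem:globalsmooth_smoothandlifttohomeo}, \ref{lem:globalsmooth_smoothinglift}, and \ref{lem:smoothmodel_liftisph}. Two small points to tighten: choose $\epsilon$ at the outset so that $(1+\epsilon)^2 D(f)<\lambda$ (rather than bounding by $(1+\epsilon)^2\lambda$ and hoping to recover), and note that mollifying only in the $b$-variable will not yield a $C^\infty$ map when the fiber dynamics are merely $C^1$---you must mollify in both base and fiber directions, as the paper does in its local smoothing lemma.
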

We break the proof of Theorem \ref{thm:globalsmoothmodels} into several steps.

    \begin{lem}\label{lem:globalsmooth_smoothandlifttohomeo}
       Suppose that $M$ is a closed smooth manifold. Let $\pi\colon  M\to B$ be a continuous $F$-bundle with $C^1$ fibers, where $F$ is a closed manifold, and let $f\colon M\to M$ be a fibered homeomorphism that is uniformly $C^1$ along fibers such that the distortion of $f$ along fibers is less than $\lambda> 1$. Let $\bar{f}\colon B\to B$ be the homeomorphism induced by $f$ on $B$. Let $\hat B$ be a smooth structure on $B$. 
       Suppose that $\bar{g}\colon \hat B \to \hat B$ is a $C^\infty$ diffeomorphism that is topologically conjugate to $\bar{f}$ via a topological conjugacy $h\colon B\to \hat B$ that is freely homotopic to the identity. 

        Then, there exist: 
        \begin{enumerate}
            \item a $C^\infty$ $F$-bundle $\hat{\pi}\colon \hat{M}\to \hat{B}$ that is isomorphic to $\pi\colon M\to B$ as a continuous $C^1$ $F$-bundle,
            \item a fibered homeomorphism $g\colon \hat M \to \hat M$ that descends to $\bar g\colon \hat{B}\to \hat{B}$ and is uniformly $C^1$ along fibers, and
            \item a $C^\infty$ metric on $\hat M$ such that $D(g)<\lambda$.
        \end{enumerate}
        Moreover, $g$ can be continuously deformed to $f$: there is a $1$-parameter family of homeomorphisms $g^t\colon M\times [0,1]\to M\times [0,1]$ such that $g^0=f$ and $g^1=g$. Moreover, each map $g^t$, $t\in (0,1)$, is continuously $C^1$-fibered.
    \end{lem}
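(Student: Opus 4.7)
The plan is to smooth the continuous $C^1$ $F$-bundle $\pi\colon M\to B$ via Proposition \ref{prop:smoothing_bundle_prop}, then take $g$ to be the conjugate of $f$ under the resulting smoothing identification. Using $h\colon B\to \hat B$, replace $\pi$ by the continuous $C^1$ $F$-bundle $h\circ \pi\colon M\to \hat B$; since $\hat B$ admits a smooth structure, Proposition \ref{prop:smoothing_bundle_prop} provides, for any prescribed $\epsilon>0$, an $\epsilon$-controlled topological isotopy $(\phi^t,\pi^t,\Pi^t)$ with $\phi^0(M)=M$ isometrically embedded in some $\R^N$, with $\phi^1(M)=\hat M$ a smoothly embedded submanifold of $\R^N$ whose fibering $\hat\pi=\pi^1\colon \hat M\to \hat B$ is $C^\infty$, and with fiber identifications $\Pi^t\colon M\to \phi^t(M)$ which are $C^1$ diffeomorphisms and $\epsilon$-almost isometries on each fiber. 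Since $D(f)<\lambda$ strictly, choose $\epsilon$ small enough that $(1+\epsilon)^2 D(f)<\lambda$. Then $\Pi^1\colon M\to \hat M$ is an isomorphism of continuous $C^1$ $F$-bundles covering $h$, which gives item (1).

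Define $g\coloneq \Pi^1\circ f\circ (\Pi^1)^{-1}$ on $\hat M$. Because $\hat\pi\circ \Pi^1=h\circ\pi$ and $h\circ \bar f=\bar g\circ h$, we compute
\[
\hat\pi\circ g=h\circ \pi\circ f\circ (\Pi^1)^{-1}=h\circ \bar f\circ h^{-1}\circ \hat\pi=\bar g\circ \hat\pi,
\]
so $g$ descends to $\bar g$ on $\hat B$. The map $g$ is uniformly $C^1$ along fibers because both $f$ and $\Pi^1$ are. Endow $\hat M$ with the smooth Riemannian metric induced from its smooth embedding into $\R^N$; under this metric, $\Pi^1$ restricts to an $\epsilon$-almost isometry on each fiber, and so
\[
D(g)\le (1+\epsilon)\,D(f)\,(1+\epsilon)=(1+\epsilon)^2 D(f)<\lambda,
\]
giving items (2) and (3).

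For the homotopy from $f$ to $g$, set $\tilde g^t\coloneq \Pi^t\circ f\circ (\Pi^t)^{-1}$ on $\phi^t(M)$ and pull back along the homeomorphism $\phi^t\colon M\to \phi^t(M)$ to obtain $g^t\colon M\to M$; continuity in $t$ and the continuous $C^1$-fiberedness of each $g^t$ follow from the corresponding properties of $\phi^t$ and $\Pi^t$. At $t=0$, the identifications $\Pi^0$ and $\phi^0$ agree, hence $g^0=f$; at $t=1$, the map $g^1$ coincides with $g$ under the identification of $\phi^1(M)$ with $\hat M$. The main obstacle in this argument is the smoothing step itself: one needs simultaneously a $C^\infty$ total space, a $C^\infty$ fibering over $\hat B$, and a fiberwise almost-isometric identification with the original bundle whose distortion defect $\epsilon$ can be made arbitrarily small. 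This is precisely what Proposition \ref{prop:smoothing_bundle_prop} delivers, and once it is in hand the conjugacy formula, the distortion estimate, and the homotopy are essentially formal.
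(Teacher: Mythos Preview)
Your proof is correct and follows essentially the same approach as the paper: apply Proposition \ref{prop:smoothing_bundle_prop} to the bundle over $\hat B$ with $\epsilon$ chosen so that $(1+\epsilon)^2 D(f)<\lambda$, then define $g$ by conjugating $f$ via the fiber identification $\Pi^1$ and estimate the distortion using the $\epsilon$-almost-isometry property. The paper's proof differs only in notation, introducing the pullback bundle $(h^{-1})^*M$ and an explicit isometric identification $p$ with $M$ before invoking Proposition \ref{prop:smoothing_bundle_prop}, whereas you work directly with $h\circ\pi\colon M\to\hat B$; the resulting conjugacy formula, distortion bound, and homotopy $g^t$ via $\Pi^t$ are the same.
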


    \begin{proof}
        Throughout this proof, we will identify $B$ and $\hat B$, and will often write/view $B=\hat B$ as topological manifolds. 
        
        To begin, take
            \begin{equation}\label{eq:eta_smoothing_bundle}
                0<\eta<\sqrt{\frac{\lambda}{D(f)}}-1.
            \end{equation}
        (We can do this because by assumption $D(f)<\lambda$).
        
        The proof is broken into two steps. In the first step, we use Proposition \ref{prop:smoothing_bundle_prop}  to construct the $C^\infty$ $F$-bundle $\hat\pi\colon \hat M\to \hat B$ that is isomorphic to $\pi\colon M\to B$. In the second step, we construct the fibered homeomorphism $g:\hat M\to \hat M$ and show that $D(g)<\lambda$.        

        \noindent\textbf{Step 1.} (Construction of the $C^{\infty}$ Bundle)
            We need to construct a $C^\infty$ $F$-bundle $\hat\pi\colon \hat M\to \hat B$ that is isomorphic to $\pi\colon M\to B$ and such that we can map between the fibers $\pi^{-1}(h^{-1}(x))$ and $\hat \pi^{-1}(x)$ with sufficiently small distortion.\footnote{We'll be more precise about what we mean about this in Step 2.}

            We now construct the setup to which we will apply Proposition \ref{prop:smoothing_bundle_prop}. Consider the pullback bundle $(h^{-1})^*\pi\colon (h^{-1})^*M\to \hat B$. Note that since $h\sim \id$, the pullback bundle $(h^{-1})^*\pi\colon (h^{-1})^*M\to \hat B$ is isomorphic to $\pi\colon M\to B$. Also note that since $(h^{-1})^*M=\left\{ (x,z)\in \hat B \times M \colon x=h\circ \pi(z)\right\}$, the map $p\colon (h^{-1})^*M\to M$ given by projection onto the second coordinate is a homeomorphism of bundles that descends to the map $h^{-1}$, i.e.\ the following diagram commutes.

            \[\begin{tikzpicture}[scale=1]
                \node (A) at (0,1) {$(h^{-1})^*M$};\node (B) at (2,1) {$M$};
                \node (C) at (0,0) {$\hat B$};\node (D) at (2,0) {$B$};
            \path[->] (A) edge node[above]{$p$} (B) (A) edge node[left]{$(h^{-1})^*\pi$} (C) (B) edge node[right]{$\pi$} (D) (C) edge node[above]{$h^{-1}$} (D);
            \end{tikzpicture}.\]

            We define the $F$-bundle $\pi'\colon M'\to \hat B$ by letting $M'$ be the manifold $(h^{-1})^*M$ with smooth structure given by pulling back the smooth structure of $M$ under $p$.\footnote{Since $h$ is only continuous, the manifold $(h^{-1})^*M$ is a priori only a topological manifold.} The Riemannian metric on $M$ also induces a Riemannian metric on $M'$ via $p$. Let $\pi'$ be given by the map $(h^{-1})^*\pi$. Note that $\pi':M'\to\hat B$ is a continuous $F$ bundle with $C^1$ fibers.

            We can now apply Proposition \ref{prop:smoothing_bundle_prop} to $\pi'\colon M'\to \hat B$. This gives us an $\eta$-controlled topological isotopy of the $C^1$ fiber bundle $\pi'\colon M'\to \hat B$ to a $C^\infty$ fiber bundle $\hat \pi\colon \hat M \to \hat B$. This $\eta$-controlled isotopy is given by the family of topological embeddings \mbox{$\phi^t\colon \!M'\to \R^n$}, the family of continuous $F$-bundles with $C^1$ fibers $\pi^t\colon \!\phi^t(M')\to \hat B$, and the family of continuous bundle maps $\Pi^t\colon M'\to \phi^t(M')$ for $t\in [0,1]$. We let $\hat M=\phi^1(M')$ (which is a $C^\infty$ embedded submanifold of $\R^n$) and $\hat \pi=\pi^1\colon \hat M\to \hat B$ (which is a $C^\infty$ $F$-bundle by Proposition \ref{prop:smoothing_bundle_prop}). Since $\hat M$ is a $C^\infty$ embedded submanifold of $\R^n$, we note that it inherits a $C^\infty$ metric from its inclusion in $\R^n$.

            It is easy to see that $\hat\pi\colon\hat M\to \hat B$ is isomorphic to $\pi\colon M\to B$. We know that $\hat \pi\colon\hat M\to \hat B$ is isomorphic to $\pi'\colon M'\to \hat B$ since $\phi^1\colon M'\to \hat M=\phi^1(M')\subset \R^n$ is a topological embedding that takes fibers of $\pi'$ to fibers of $\hat\pi=\pi^1$ (and thus $\phi^1$ gives an isomorphism between the bundles $\hat\pi\colon\hat M\to \hat B$ and $\pi'\colon M'\to \hat B$). This combined with the fact that $\pi'\colon M'\to \hat B$ is isomorphic to $\pi\colon M\to B$ completes the argument. 

            Finally note from the definition of a $\eta$-controlled isotopy, we know that $\Pi^1$ is a $C^1$ diffeomorphism from the fibers of $\pi'$ to the fibers of $\hat \pi$ and that the restriction of $\Pi^1$ to each fiber is a $\eta$-almost isometry. 

            \ \
            
        \noindent\textbf{Step 2.} (Constructing the homeomorphism on the smoothed bundle)
            We now construct the desired fibered homeomorphism $g\colon \hat M\to \hat M$. We begin by defining $g$ along fibers. We will give a detailed argument in for $g=g^1$, as in this case we have additional claims about the distortion along the fiber direction. To adapt the argument for $t\in (0,1)$, we would use $\Pi^t$ rather than $\Pi^1$, as this is strictly simpler, we omit it.
            
            For $x\in \hat B$, let 
                \begin{align*}
                    \hat F_x &= \hat\pi^{-1}(x) \subset M, \\
                    F_{h^{-1}(x)} &= \pi^{-1}(h^{-1}(x)) \subset \hat M, \\
                    F'_x=(\pi')^{-1}(x) &= \{x\}\times \pi^{-1}(h^{-1}(x))\subset (h^{-1})^*M=M'.
                \end{align*}
            We know that the map $p\colon M'=(h^{-1})^*M\to M$ is an isometry that takes $F'_x$ to $F_{h^{-1}(x)}$ from our definition of the Riemannian metric on $M'$.

            From the definition of a $\eta$-controlled isotopy, we know that $\Pi^1|_{F'_x}\colon F'_x\to \hat F_x$ is an $\eta$-almost isometry, i.e.\ that 
                \[
                    \left\|D \left(\Pi^1|_{F'_x}\right)\right\|<1+\eta \quad \text{and} \quad \left\| D\left(\Pi^1|_{F'_x}\right)^{-1}\right\|<1+\eta.
                \]

            Thus, for $x\in \hat B$, we define a map $g_x:\hat F_x\to \hat F_{h\circ \bar f\circ h^{-1}}=F_{\bar{g}(x)}$ by
                \begin{equation}\label{eq:Distortion_Pi1}
                    g_x=\Pi^1|_{F'_{h\circ \bar f \circ h^{-1}(x)}} \circ p^{-1}\circ f \circ p \circ \left(\Pi^1|_{F'_x}\right)^{-1}.
                \end{equation}
            We then define $g\colon \hat M\to \hat M$ by $g(z)=g_x(z)$ for $\hat \pi(z)=x$. Note that $g\colon \hat M\to \hat M$ is a fibered homeomorphism that is $C^1$ along fibers and descends to $\bar g\colon \hat B\to \hat B$. 

            We now need to show that the distortion of $g$ along fibers is less than $\lambda$. To do this we first compute the derivative of $g_x$ for fixed $x\in \hat B$:
                \[
                    Dg_x= D \Pi^1|_{F'_{h\circ \bar f \circ h^{-1}(x)}} \circ D p^{-1}\circ Df \circ Dp \circ D\left(\Pi^1|_{F'_x}\right)^{-1}.
                \]
            Then, 
                \begin{align*}
                    \left\|Dg_x\right\| 
                &\leq 
                    \|D \Pi^1|_{F'_{h\circ \bar f \circ h^{-1}(x)}}\| 
                    \|D p^{-1}\|
                    \|Df|_{F_{h^{-1}(x)}}\| 
                    \| Dp \|
                    \|D\left(\Pi^1|_{F'_x}\right)^{-1}\|\\
                &= 
                    \|D \Pi^1|_{F'_{h\circ \bar f \circ h^{-1}(x)}}\| 
                    \|Df|_{F_{h^{-1}(x)}}\| 
                    \|D\left(\Pi^1|_{F'_x}\right)^{-1}\| \\
                &\leq (1+\eta)^2 D(f)
                \end{align*}
            by \eqref{eq:Distortion_Pi1} and the fact that $p\colon M'\to M$ is an isometry.
        Then from \eqref{eq:eta_smoothing_bundle}, we get that $(1+\eta)^2 D(f)<\lambda$. We've shown that $\|Dg_x\|<\lambda$ for all $x\in \hat B$. An analogous argument shows that $\|Dg_x^{-1}\|<\lambda$. We've therefore shown that the distortion of $g$ along fibers is less than $\lambda$.
    \end{proof}

The following lemma shows that if we have a homeomorphism of a smoothly fibered manifold that is smooth along fibers, and whose projection is smooth on the quotient manifold, then we can deform it slightly so that it becomes a $C^{\infty}$ diffeomorphism. Moreover, this diffeomorphism will have its norm along the fiber controlled.

    \begin{lem}\label{lem:globalsmooth_smoothinglift}
        Let $\pi\colon M\to B$ be a $C^\infty$ $F$-bundle, where $F$ is a closed manifold, and let $f\colon M\to M$ be a fibered homeomorphism that is uniformly $C^1$ along fibers such that $f$ induces a $C^\infty$ map $\bar{f}\colon B\to B$ and $D(f)<\lambda$ for some $\lambda>1$. 

        Then, there exists a $C^\infty$ diffeomorphism $\tilde f \colon M\to M$ that preserves $\pi\colon M\to B$ such that 
            \begin{itemize}
                \item   the projection of $\tilde f$ to $B$ is $\bar f$, and
                \item there exists a $C^\infty$ metric on $M$ such that $D(\tilde f) < \lambda$.
            \end{itemize}
        Further $\wt{f}$ is freely homotopic to $f$ through such fibered maps preserving $\pi$.
    \end{lem}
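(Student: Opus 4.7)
The plan is to smooth $f$ by mollifying it transversally within smooth local trivializations of the bundle, leaving the already-smooth base projection $\bar f$ untouched, and patching the local smoothings via a smooth partition of unity on $B$. Since $\pi\colon M\to B$ and $\bar f$ are already $C^\infty$, the only obstruction to being smooth is the transverse regularity of the fiberwise component of $f$, and this can be upgraded by a convolution argument that is a $C^1$-small perturbation along fibers.

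\textbf{Local smoothing.} First I would pick a finite cover $\{U_i\}_{i=1}^k$ of $B$ by open sets such that both $U_i$ and $\bar f(U_i)$ lie in smooth trivialization charts of $\pi$. In these coordinates $f$ takes the form $(x,y)\mapsto(\bar f(x),\phi_i(x,y))$, where $\phi_i\colon U_i\times F\to F$ is jointly $C^0$, $C^1$ in $y$, with $D_y\phi_i$ jointly continuous and $\phi_i(x,\cdot)$ a $C^1$ diffeomorphism for each $x$. I would isometrically embed $F\hookrightarrow\R^N$ and let $\pi_F$ denote the projection from a tubular neighborhood onto $F$. Convolving $\phi_i$ in both the $x$- and $y$-variables with standard mollifiers at scale $\epsilon$ (treating $\phi_i$ as $\R^N$-valued and post-composing with $\pi_F$) produces a map $\tilde\phi_i^\epsilon$ that is $C^\infty$ in $(x,y)$. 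Crucially, mollifying in one variable preserves regularity in the other, and at small scale the result is a $C^1$-small perturbation of $\phi_i$ along each fiber, hence still a fiberwise diffeomorphism by openness of $\Diff^1(F)\subset C^1(F,F)$.

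\textbf{Patching, distortion, and homotopy.} Next I would fix a smooth partition of unity $\{\rho_i\}$ subordinate to $\{U_i\}$ and build $\tilde f$ inductively over $i$: starting from $f$, replace the current map in the $i$-th chart by
\[
(x,y)\mapsto\pi_F\bigl((1-\rho_i(x))\cdot(\text{current})(x,y)+\rho_i(x)\cdot\tilde\phi_i^\epsilon(x,y)\bigr).
\]
Because $\tilde\phi_i^\epsilon$ is $C^0$-close to the current map, the convex combination lies in the tubular neighborhood of $F$, and the projection $\pi_F$ is smooth, so the modified map is globally $C^\infty$ and still fibers over $\bar f$. After all $k$ steps, $\tilde f$ is $C^\infty$, preserves $\pi$, and projects to $\bar f$. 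Choosing $\epsilon$ small enough keeps each step $C^1$-small along fibers, so $D(\tilde f)<\lambda$ with respect to any fixed smooth metric on $M$. A homotopy from $f$ to $\tilde f$ is obtained by concatenating the $k$ inductive interpolations, each of which rescales $\rho_i$ by $t\in[0,1]$; all intermediate maps are fibered, share base projection $\bar f$, and are continuously $C^1$ along fibers.

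\textbf{Main obstacle.} The principal technical difficulty is combining two constraints: preserving the fiberwise-diffeomorphism property throughout the mollification, and gluing local smoothings across chart overlaps without losing transverse smoothness. Both are resolved by the tubular-neighborhood device, which converts the nonlinear patching into a linear convex combination in $\R^N$ followed by the smooth retraction $\pi_F$, together with the openness of $\Diff^1(F)$ under $C^1$ perturbations; the distortion bound then reduces to choosing the mollification scale $\epsilon$ small enough that the $C^1$-norm along fibers changes by less than $\lambda-D(f)$.
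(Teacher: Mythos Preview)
Your overall strategy---mollify the fiber component of $f$ in local trivializations and patch---is the same as the paper's, and your device of embedding $F$ isometrically in $\R^N$ and post-composing with the tubular retraction $\pi_F$ is a clean way to handle the nonlinear target (the paper instead localizes in the fiber as well, working in product charts $U_1\times U_2\subset\R^n\times\R^m$).

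However, your patching step has a genuine gap. Trace through two charts with $\rho_1+\rho_2=1$: after step~1 the map on $U_1$ is $\pi_F\bigl((1-\rho_1)f+\rho_1\tilde\phi_1^\epsilon\bigr)$, which is \emph{not} smooth in $x$ wherever $\rho_1<1$, because the non-smooth $f$ appears with positive coefficient. After step~2 the result is $\pi_F\bigl((1-\rho_2)\,[\text{step 1}]+\rho_2\tilde\phi_2^\epsilon\bigr)$; expanding, the coefficient of the original $f$ is $(1-\rho_1)(1-\rho_2)=\rho_1\rho_2$, which is nonzero on the overlap. So the final map is not $C^\infty$ there. The issue is structural: in a partition of unity with $k\ge2$ functions, the sets $\{\rho_i=1\}$ do not cover $B$, so the non-smooth ``current'' map is never fully replaced.

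Two easy fixes. \emph{(a)}~Replace the partition-of-unity weights by bump functions $\eta_i$ with $\eta_i\equiv1$ on inner sets $U_i''$ that themselves cover $B$, and $\eta_i\equiv0$ outside $U_i'\subset U_i$; then at step $i$ the map becomes genuinely smooth on $U_i''$, and---this is the key observation the paper uses---mollification followed by bump-function interpolation \emph{preserves} smoothness already achieved in previous steps, so after all $k$ steps the map is smooth on $\bigcup U_i''=B$. \emph{(b)}~Alternatively, form the all-at-once average $\tilde f=\pi_F\bigl(\sum_i\rho_i\tilde\phi_i^\epsilon\bigr)$; since every summand is smooth and the original $f$ never appears, this is smooth directly. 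Either route repairs the argument; the distortion bound and the homotopy then go through exactly as you wrote.
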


\begin{proof}

The argument will proceed by mollifying $f$ while paying attention to the fibered structure. First, we state a fact about mollifying functions in $\R^n$. 

\begin{lem}\label{lem:local_smoothing_lemma}
Suppose $U_1\subset \R^n$ and $U_2\subset \R^m$ is open and that $f\colon U_1\times U_2\subseteq \R^n\times \R^m\to \R^n\times \R^m$ is a function of the form $\phi(x,y)=(\phi_1(x),\phi_2(x,y))$, where $\phi_1(x)$ is smooth and $\phi_2(x,y)$ is uniformly smooth in the $y$ variable, and $\phi(x,\cdot )$ varies uniformly continuously with $x$. Then for any $U_1''\times U_2''\subset U_1'\times U_2'\subset U_1\times U_2$, for all $\epsilon>0$, there exists a function $\wt{\phi}_{\epsilon}=(\wt{\phi}_{1,\epsilon}(x),\wt{\phi}_{2,\epsilon}(x,y))$ such that: 
\begin{enumerate}
    \item $\wt{\phi}_{\epsilon}=\phi_1$,
    \item For all $x$, $\abs{\partial_y \wt{\phi}_{2,\epsilon}-\partial_y\phi_2}<\epsilon$, 
    \item $\wt{\phi}\vert_{U_1\times U_2\setminus U_1'\times U_2'}=\phi$, 
    \item On $U_1''\times U_2''$ $\wt{\phi}$ is $C^{\infty}$.
\end{enumerate}

\end{lem}

\begin{proof}
We will achieve this by modifying $\phi_2$ only. First we mollify $\phi_2$, then we use a bump function to make it agree along the boundary. The mollification requires some steps because we wish to preserve some properties of $\phi$. 

Fix a smooth standard mollifier $\psi_{\epsilon}$ of small size, i.e.~for $\psi$ a standard bump function set $\psi_{\epsilon}(z)=\epsilon^{-n}\psi(\epsilon z)$. Then define
\[
\hat{\phi}_2(x,y)=\int_{\R^{m+n}} \phi_2((x,y)-z)\psi_{\epsilon}(z)\,dz.
\]
Then note that $\hat{\phi}_2$ is $C^{\infty}$ smooth. 
Next, recall that $\frac{d}{dx}(f\star g)=(\frac{df}{dx}) \star g$. Hence Young's inequality, $\|f*g\|_{\infty}\le \|f\|_{L^1}\|g\|_{\infty}$, gives that the norm of $\hat{\phi}_2$ when differentiated in the $y$ direction is at most the norm of $\phi$ when differentiated in the $y$ direction.

We now use two bump functions, $\eta_1$ and $\eta_2$, to interpolate between $\phi_2$ and $\hat{\phi}_2$. We will take $\eta_1$ so that it equals $1$ on $U_1''\times U_2''$ and $0$ outside of $U_1'\times U_2'$ and take $\eta_2$ to be its complement. We then define
\[
\wt{\phi}=(\phi_1(x),\eta_2\phi_2(x,y)+\eta_1\hat{\phi}_2(x,y)).
\]
As long as $\epsilon$ is chosen sufficiently small, this function has all of the properties required by the lemma. 
\end{proof}

\begin{claim}
For any $\epsilon>0$, there exists a diffeomorphism $\wt{f}$ such that 

\begin{enumerate}
    \item 
    $\wt{f}$ fibers over $\overline{f}$;
    \item 
    For all $x\in B$, $\overline{f}(\mc{F})$ is $\epsilon$-close to $\overline{f}(\mc{F}(x))$ and the projection along the normal bundle $\Pi\colon \wt{f}(\mc{F}(x))\to \mc{F}(\wt{f}(x))$ is $(1+\epsilon)$-Lipschitz; 
    \item 
    $\|D\wt{f}\vert_{\mc{F}}\|<\lambda$.
\end{enumerate}
\end{claim}

\begin{proof}
This is the claim follows immediately from the lemma. Cover the bundle $M$ by finitely many charts $\psi_1\colon U_1\times U_2\times \R^n\times \R^m \to M$ as well as another chart $\psi_2$ containing the image of $f\psi_1(U_1\times U_2)$. For each $U_1''\times U_2''\subset U_1'\times U_2'\subset U_1\times U_2$, we can apply the lemma to redefine $f$ on $\psi(U_1\times U_2)$ so that it is $C^{\infty}$ on $\psi_1(U_1''\times U_2'')$ and satisfies the other conclusions of Lemma \ref{lem:local_smoothing_lemma}. We can fix a cover by finitely many of these nested product neighborhoods $U_1''\times U_2''\subset U_1'\times U_2'\subset U_1\times U_2$ and apply Lemma \ref{lem:local_smoothing_lemma} in each. As the lemma preserves places where the function is already smooth, and we only need to apply the lemma finitely many times, the needed conclusion follows.
\end{proof}

Applying the claim to the construction from Step 1 now concludes the proof. To obtain the final statement about the homotopy equivalence, note that as long as $\epsilon$-is sufficiently small, we can apply a fiberwise straight-line homotopy to homotope between the original map and its smoothing.
\end{proof}

The following lemma states that if one has a smoothly fibered diffeomorphism where the norm along the fibers is dominated by the behavior in the base, then this diffeomorphism is partially hyperbolic.
    \begin{lem}\label{lem:smoothmodel_liftisph}
        Let $\pi\colon M\to B$ be a $C^\infty$ $F$-bundle, where $F$ is a closed manifold, and let $f\colon M\to M$ be a $C^\infty$ diffeomorphism that preserves $\pi\colon M\to B$ such that for some $\lambda>1$, 
            \begin{itemize}
                \item There is a Riemannian metric on $M$ such that the distortion of $f$ along fibers is less than $\lambda$ (i.e. $D(f)<\lambda$), and 
                \item  $f$ descends to an Anosov diffeomorphism $\bar f\colon B\to B$  and a Riemannian metric on $B$ that for all $x\in B$, $\left\| D_x\bar{f}|_{E^{s}_{\bar{f}}(x)}\right\|<\lambda^{-1}$, $m\left( D_x\bar{f}|_{E^u_{\bar{f}}(x)}\right)>\lambda$.
            \end{itemize}
        Then, $f\colon M\to M$ has a partially hyperbolic splitting $TM=E^s_f \oplus E^c_f \oplus E^u_f$ such that $E^c_f(p)={ \ker \pi}$ and there exists a $C^\infty$ metric on $M$ such that for all $p\in M$, 
        \begin{equation}
        \label{eq:smooth_model_ph_bounds}
        \left\| D_pf|_{E^s_f(p)}\right\| < \lambda^{-1} < m\left( D_pf|_{E^c_f(p)}\right) < \left\| D_pf|_{E^c_f(p)}\right\| < \lambda < m\left( D_pf|_{E^u_f(p)}\right),
        \end{equation}
        in particular $f$ is partially hyperbolic. 
    \end{lem}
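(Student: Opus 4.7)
The plan is to set $E^c_f := V = \ker D\pi$, which is $Df$-invariant because $f$ preserves the fibration, and then to construct $E^u_f$ and $E^s_f$ as continuous $Df$-invariant subbundles via a cone-field argument, using the horizontal lifts of $E^u_{\bar f}$ and $E^s_{\bar f}$ as reference directions.

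First, I would choose the smooth metric on $M$ to be a Riemannian submersion metric: fix any smooth horizontal distribution $H$ complementary to $V$, metrize $H$ by pulling back the given smooth metric on $B$ along the isomorphism $D\pi|_H$, choose an arbitrary smooth metric $g_V$ on $V$, and form the orthogonal direct sum $g = g_H \oplus g_V$. Then rescale vertically to obtain $g_\varepsilon := g_H \oplus \varepsilon^2 g_V$ for a small parameter $\varepsilon > 0$; this is still $C^\infty$. Using the $\bar f$-invariant continuous splitting $TB = E^s_{\bar f} \oplus E^u_{\bar f}$, pull back via $(D\pi|_H)^{-1}$ to get continuous subbundles $\tilde H^s, \tilde H^u \subset H$. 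In the splitting $V \oplus \tilde H^s \oplus \tilde H^u$, the derivative $Df$ takes the block-triangular form
\[
Df_p = \begin{pmatrix} A(p) & B_s(p) & B_u(p) \\ 0 & C_s(p) & 0 \\ 0 & 0 & C_u(p) \end{pmatrix},
\]
because $Df$ preserves $V$ and because $\tilde H^u, \tilde H^s$ are horizontal lifts of $D\bar f$-invariant subbundles. The hypotheses give $\|A\| \leq D(f) < \lambda$, $m(A) > \lambda^{-1}$, $\|C_s\| < \lambda^{-1}$, and $m(C_u) > \lambda$ with respect to $g$. These diagonal-block bounds are unchanged when passing to $g_\varepsilon$, while the off-diagonal norms $\|B_s\|_{g_\varepsilon}, \|B_u\|_{g_\varepsilon}$ are of order $O(\varepsilon)$ because those blocks send horizontal vectors to vertical ones.

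Next, I would construct $E^u_f$ by a cone-field argument. In the coarser decomposition $TM = E^{cs} \oplus \tilde H^u$ with $E^{cs} := V \oplus \tilde H^s$, define
\[
\mc{C}^u_\alpha(p) := \{v_{cs} + v_u : \|v_{cs}\|_{g_\varepsilon} \leq \alpha \|v_u\|_{g_\varepsilon}\}.
\]
Using the block form, a direct estimate gives
\[
\|(Dfv)_{cs}\|_{g_\varepsilon} \leq \bigl(\|A\| + O(\varepsilon)\bigr)\|v_{cs}\|_{g_\varepsilon} + O(\varepsilon)\|v_u\|_{g_\varepsilon},\qquad \|(Dfv)_u\|_{g_\varepsilon} \geq m(C_u)\|v_u\|_{g_\varepsilon}.
\]
Since $m(C_u) > \|A\|$ by hypothesis, for $\varepsilon$ small enough and $\alpha = O(\varepsilon)$ chosen appropriately, the cone is strictly invariant under $Df$ and vectors inside it are expanded by a factor strictly greater than $\lambda$. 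Then $E^u_f(p) := \bigcap_{n \geq 0} Df^n(\mc{C}^u_\alpha(f^{-n}(p)))$ is a continuous $Df$-invariant subbundle of the correct dimension on which $Df$ expands by more than $\lambda$. The symmetric argument applied to $Df^{-1}$ in the decomposition $(V \oplus \tilde H^u) \oplus \tilde H^s$, using the bound $m(A) > \|C_s\|$ (a consequence of $m(A) > \lambda^{-1} > \|C_s\|$), produces $E^s_f$ with $\|Df|_{E^s_f}\|_{g_\varepsilon} < \lambda^{-1}$. Transversality of $E^s_f$, $E^c_f = V$, and $E^u_f$ is automatic because these subbundles lie in disjoint small cones around the pairwise transverse reference subbundles, and the center bounds $\lambda^{-1} < m(Df|_V) \leq \|Df|_V\| < \lambda$ are immediate from $D(f) < \lambda$.

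The main obstacle is the off-diagonal ``twist'' $B_s, B_u$: horizontal vectors acquire vertical components under $Df$, so the naive horizontal lifts $\tilde H^u, \tilde H^s$ are not $Df$-invariant, and a direct cone argument in the original metric can fail if $\|B_u\|$ is large compared to the spectral gap $m(C_u) - \|A\|$. The remedy is precisely the rescaling $g_\varepsilon$: shrinking the fiber metric shrinks the norms of the horizontal-to-vertical blocks $B_s, B_u$ by a factor of $\varepsilon$ while leaving the diagonal block norms fixed. With $\varepsilon$ chosen small compared to the spectral gaps $m(C_u) - \|A\|$ and $m(A) - \|C_s\|$, the cone estimates close and the partial hyperbolicity inequalities \eqref{eq:smooth_model_ph_bounds} hold with respect to the $C^\infty$ metric $g_\varepsilon$.
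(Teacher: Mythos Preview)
Your approach is correct and is a genuine alternative to the paper's, with one slip to fix: when you write ``choose an arbitrary smooth metric $g_V$ on $V$,'' you must instead take $g_V$ to be the restriction to fibers of the metric supplied in the hypothesis. Otherwise the bounds $\|A\| < \lambda$ and $m(A) > \lambda^{-1}$ on the vertical block do not hold, and vertical rescaling cannot repair this since $A\colon V\to V$ has source and target scaled by the same factor. You clearly intend the right metric when you invoke $D(f) < \lambda$ two sentences later, so this is a wording slip rather than a conceptual gap.

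The paper builds the same submersion metric (pullback on a smooth horizontal $H$, given fiber metric on $V$, orthogonal sum) but does \emph{not} rescale. In place of cone fields it runs a graph-transform: writing $Df$ on $\hat E^u \oplus E^c$ as $\left(\begin{smallmatrix} A^u & 0 \\ C^u & K^u \end{smallmatrix}\right)$, the transform $\Gamma^u s = (C^u + K^u s)(A^u)^{-1}$ on sections $s\colon \hat E^u \to E^c$ is affine with Lipschitz constant $\|K^u\|\,\|(A^u)^{-1}\| < \lambda\cdot\lambda^{-1} < 1$, and $E^u_f$ is the graph of the fixed point (similarly for $E^s_f$ using $f^{-1}$). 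The off-diagonal block $C^u$ never enters the contraction estimate, so the paper needs no rescaling to \emph{produce} the invariant bundles. Your rescaling $g_\varepsilon = g_H \oplus \varepsilon^2 g_V$ does different work: it forces $E^u_f$ into an $O(\varepsilon)$-cone about $\tilde H^u$, making the final inequality $m(Df|_{E^u_f}) > \lambda$ transparent, whereas the paper simply asserts that the bounds \eqref{eq:smooth_model_ph_bounds} hold in the unrescaled metric. Graph transforms and invariant cone fields are interchangeable in this setting, but your version packages the quantitative partial-hyperbolicity estimates more explicitly, at the cost of the auxiliary parameter $\varepsilon$.
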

\begin{proof}
    The proof is similar to the proof of \cite[Proposition 4.3]{doucette2023smooth}. Let $\langle\cdot,\cdot\rangle$ denote the existing Riemannian metric on $M$. We begin by constructing a new Riemannian metric $\langle \cdot, \cdot \rangle'$ on $M$, which we will use to construct a partially hyperbolic splitting $TM=E^s_f\oplus E^c_f\oplus E^u_f$. To construct this Riemannian metric, we use the following three ingredients:
        \begin{itemize}
            \item  
                A smooth family $\langle \cdot, \cdot\rangle_x^F$ of Riemannian metrics on the fibers $\pi^{-1}(x), \ x\in B$ such that 
                \begin{equation}
                \label{eq:distortionfibers<lambda}
                    \lambda^{-1}<m_x^F\left(Df|_{\pi^{-1}(x)}\right) \leq \left\| D_xf|_{\pi^{-1}(x)} \right\|_x^F <\lambda,
                \end{equation}
                where $m_x^F\left(\cdot\right)$ and $\left\| \cdot \right\|_x^F$ denote the conorm and operator norm, respectively, taken with respect to the metric $\langle \cdot, \cdot\rangle_x^F$. We get such a family by restricting the original Riemannian norm $\langle \cdot, \cdot \rangle$ on $M$ to the fibers of $\pi:M\to B$. The bounds in \eqref{eq:distortionfibers<lambda} are immediate from the assumption that the distortion of $f$ along fibers is less than $\lambda$. 
    
            \item 
                A Riemannian metric $\langle \cdot , \cdot \rangle^B$ on $B$ such that for $x\in B$, 
                \begin{equation}
                \label{eq:adapted_metric_on_B}
                    \left\| D_x\bar{f}|_{E^{s}_{\bar{f}}(x)}\right\|^B<\lambda^{-1} 
                    \quad \text{and} \quad 
                    m^B\left( D_x\bar{f}|_{E^u_{\bar{f}}(x)}\right)>\lambda,
                \end{equation}
                where $m^B$ and $\|\cdot \|^B$ are the conorm and operator norm, respectively, taken with respect to the metric $\langle \cdot, \cdot \rangle ^B$.  Such a metric exists by assumption.
    
            \item 
                An Ehresmann connection $H$ on $M$. In other words, we fix a smooth subbundle $H$ of $TM$ such that for all $p\in M$, $T_pM=H_p\oplus \ker(D_p\pi)$. Note that $D_p\pi|_{H_p}\colon H_p\subset T_pM\to T_{\pi(p)}B$ is an isomorphism and the map $p\mapsto H_p$ is smooth. We can do this because the bundle $\pi\colon M\to B$ is smooth, so, for example, we can take the orthogonal complement to the $\ker \pi$.
        \end{itemize}
    Using these three ingredients, we define a Riemannian metric $\langle \cdot ,\cdot \rangle'$ on $M$ by letting, for $p\in M$, 
        \begin{itemize}
            \item  
                $\langle v,v'\rangle'=\langle D_p\pi(v),D_p\pi(v')\rangle^B$ for $v,v'\in H_p$,
            \item 
                $\langle v,v'\rangle'=\langle v,v'\rangle^F_{\pi(p)}$ for $v,v'\in \ker(D_p\pi)$, and
            \item 
                declaring that $H_p$ is orthogonal to $\ker(D_p\pi)$.
        \end{itemize}

    We will now construct a $Df$ invariant splitting $TM=E^s\oplus E^c\oplus E^u$. We begin by letting $E^c=\ker(D\pi)$. Note that since $\pi$ is $f$ invariant, $E^c$ is $f$ invariant. Also note that $E^c(p)=T_p(\pi^{-1}(\pi(p))$.

    Next, we construct $E^u$ and $E^s$ using graph transform arguments. In the following let $\sigma\in \{s,u\}$. We begin by lifting the stable/unstable bundle $E^\sigma_{\bar f}\subset TB$ of $\bar f$ to a subbundle $\hat E^\sigma\subset TM$ by letting 
        $$
            \hat E^\sigma(p):=H_p\cap (D_p\pi)^{-1}\left(E^\sigma_{\bar f}(\pi(p))\right)
        $$
    for $p\in M$. Note that since $D_p\pi|_{H_p}:H_p\to T_{\pi(p)}B$ is an isomorphism, 
    \[
    (D_p\pi)^{-1}\left(E^\sigma_{\bar f}(\pi(p))\right) \cong \hat E^\sigma (p) \oplus E^c(p).
    \]Also note that $Df$ preserves $\hat E^\sigma \oplus E^c$ since $D\bar{f}$ preserves $E^\sigma_{\bar f}$ and $f$ covers $\bar f$.

    Now, consider the following set of vector bundles maps $\hat{E}^{\sigma}\to E^c$:
        $$
            \Sigma^\sigma\coloneqq \left\{ s:\hat E^\sigma \to E^c : s(\hat E^\sigma(p))\subset E^c(p), \ s_p\in L\left( \hat E^\sigma(p), \hat E^c(p)\right), \ \forall p\in M \right\},
        $$
    where $L\left( \hat E^\sigma(p), \hat E^c(p)\right)$ is the set of linear maps from $E^\sigma(p)$ to $E^c(p)$. We put the norm $\|s\|_{\Sigma^\sigma}=\sup_{p\in M} \|s_p\|'$ (where $\|s_p\|'$ is the operator norm of $s_p$) on $\Sigma^\sigma$, which makes $\Sigma^\sigma$ a Banach space. 

    We now consider the case where $\sigma=s$ and $\sigma=u$ separately. We first consider the case where $\sigma=u$. Let $\Gamma^u\colon \Sigma^u\to \Sigma^u$ be the linear graph transform covering $f$. Recall that $\Gamma^u$ is defined by the property that for $s\in \Sigma^u, \ p\in M$,
        \begin{equation}\label{eq:def_graphtransform}
            D_pf\left(\graph(s_p)\right) = \graph \left( \Gamma^u(s_p) \right).
        \end{equation}
    Recall that $\graph(s_p):=\left\{(v,s_pv)\in \hat E^u (p)\oplus E^c(p) \right\}$. We now want to use \eqref{eq:def_graphtransform} along with the fact that $Df$ preserves $\hat E^u \oplus E^c$ and $E^c$ to write $\Gamma^u$ in terms of $Df$. 
    
    We begin by noting that since $Df$ preserves both $\hat E^u \oplus E^c$ and $E^c$, we can write $Df|_{\hat E^u \oplus E^c}$ in block form as follows: for each $p\in M$,
        \begin{equation}\label{eq:GraphTransformBlockMatrix}
            D_pf|_{\hat E^u(p)\oplus E^c(p)}
            =
            \begin{pmatrix}
                A^u_p & 0 \\
                C^u_p & K^u_p
            \end{pmatrix}
            : \hat E^u(p)\oplus E^c(p) \to \hat E^u(f(p)) \oplus E^c(f(p)),
        \end{equation}
    where $A_p^u\colon \!\hat E^u(p)\to \hat E^u(f(p))\!$, $C^u_p\colon \hat E^u(p)\to E^c(f(p))$, and $K_p^u:E^c(p)\to E^c(f(p))$. Note that since $D_pf$ is invertible, $A^u_p$ and $K_p^u$ are both invertible. 

    We now use \eqref{eq:GraphTransformBlockMatrix} to rewrite the defining property \eqref{eq:def_graphtransform} of $\Gamma^u$. For $s\in \Sigma^u$, $p\in M$,
        \begin{align*}
            D_pf\left( \graph(s_p)\right)
            &=
            \left\{
                \begin{pmatrix} 
                    A_p^u v_p \\
                    C_p^u v_p +K_p^u s_p v_p
                \end{pmatrix}
                : v_p\in \hat E^u(p)
            \right\}\\
            &=
            \left\{
                \begin{pmatrix} 
                    w_{f(p)} \\
                    (C_p^u +K_p^u s_p )\circ (A_p^u)^{-1} w_{f(p)}
                \end{pmatrix}
                : w_{f(p)}\in \hat E^u(f(p))
            \right\}            
        \end{align*}
    So, the defining property \eqref{eq:def_graphtransform} of $\Gamma^u$ is equivalent to the statement that for all $s\in \Sigma^u, \ p\in M,$
        $$
            \Gamma^u s_p = (C_p^u +K_p^u s_p )\circ (A_p^u)^{-1}
        $$
        
    Now that we have given $\Gamma^u$ explicitly in terms of $Df$, we want to find an invariant section $s^u\in \Sigma^u$ for $\Gamma^u$. We will then be able to define our unstable bundle $E^u_f$ for $f$ to be the graph of our invariant section $s^u$ for $\Gamma^u$.

    To show that $\Gamma^u:\Sigma^u\to \Sigma^u$ has an invariant section, it suffices to show that $\Gamma^u$ is a contraction map. To do this, we take $s,s'\in \Gamma^u$ and $p\in M$. By linearity, we get that 
        \begin{align*}
            \left\| \Gamma^u s_p -\Gamma^u s_p' \right\|'
            &= 
            \left\| (C_p^u +K_p^u s_p )\circ (A_p^u)^{-1} - (C_p^u +K_p^u s'_p )\circ (A_p^u)^{-1} \right\|'
            \\
            &=
            \left\| (K_p^u s_p -K_p^u s'_p)\circ (A_p^u)^{-1} \right\|'
            \\
            &=
            \left\| K_p^u \circ (s_p -s'_p)\circ (A_p^u)^{-1} \right\|'
            \\
            &\leq 
            \left\| K_p^u \right\|' \left\|s_p-s'_p \right\|' \left\|(A_p^u)^{-1} \right\|'
        \end{align*}
    Thus, to show that $\Gamma^u$ is a contraction map, it suffices to show that 
        \begin{equation}
        \label{eq:GraphTransformContraction}
            \sup_{p\in M}\left(\left\| K_p^u \right\|' \left\|(A_p^u)^{-1}\right\|'\right)
            <
            1.
        \end{equation}
    To show that \eqref{eq:GraphTransformContraction} holds, we first bound $\|K_p^u\|'$. Recall from our definition of $K_p^u:E^c(p)\to E^c(f(p))$ that $K_p^u=D_pf|_{E^c(p)}$. We know from \eqref{eq:distortionfibers<lambda} that 
        $$
            \|K_p^u\|'=\left\|D_pf|_{E^c(p)}\right\|^F <\lambda.
        $$
    By compactness of $M$, we can find $\varepsilon\in (0,\lambda)$ such that 
        $$
            \|K_p^u\|'<\lambda-\varepsilon,
        $$ 
    for all $p\in M$.

    Now, we bound $ \left\|(A_p^u)^{-1} \right\|'$. We do this by combining the following four observations: Take $v_p\in \hat E^u(p)$. 
    \begin{itemize}
    \item 
        Since $v_p\in \hat E^u(p)\subset H_p$ and $A_p^u v_p\in \hat E^u(f(p))\subset H_{f(p)}$, the definition of $\langle \cdot, \cdot \rangle'$ gives that
            $$
                \| v_p\|'=\|D_p\pi(v_p)\|^B 
                \quad \text{ and } \quad
                \|A_p^u v_p \|'=\left\|D_{f(p)}\pi\left(A_p^u v_p\right)\right\|^B
            $$
    \item 
        By writing $D_pf(v_p)=A_p^u(v_p)+C_p^u(v_p)$ and observing that $C_p^u(v_p)\in E^c(f(p))=\ker(D_{f(p)}\pi)$, we get that
            $$
                D_{f(p)}\pi\left( D_p f(v_p)\right) 
                =
                D_{f(p)}\pi \left( A_p^u(v_p)+C_p^u(v_p) \right)
                =
                D_{f(p)}\pi\left( A^u_p(v_p)\right).
            $$
    \item 
        Since $f$ covers $\bar f$, we see that 
        $$
            D_{\pi(p)}\bar f \left( D_p\pi(v_p) \right) 
            =
            D_{f(p)}\pi\left( D_p f(v_p)\right).
        $$
    \item 
        From \eqref{eq:adapted_metric_on_B}, we have that for all $x\in B$, $m^B\left( D_x\bar{f}|_{E^u_{\bar{f}}(x)}\right)>\lambda$. This means that
            $$
                \lambda \|D_p\pi(v_p)\|^B < \left\| D_{\pi(p)} \bar f \left( D_p\pi(v_p)\right) \right\|^B.
            $$
    \end{itemize}
    Combining these four observations, we get that $\lambda \|v_p\|' < \|A_p^u v_p\|'$ for all $v_p\in \hat E^u(p)$. Since $A_p^u$ is invertible, we get that for all $w_{f(p)}\in \hat E^u(f(p))$, $\left\|(A_p^u)^{-1}w_{f(p)}\right\|'< \lambda^{-1}\|w_{f(p)}\|'$. In other words, $\left\| (A_p^u)^{-1}\right\|'<\lambda^{-1}$. 

    We have shown that for all $p\in M$, $\|K_p^u\|'<\lambda-\varepsilon$ and $\left\| (A_p^u)^{-1}\right\|'<\lambda^{-1}$. Combining these gives that 
        \[
            \left\| K_p^u \right\|' \left\|(A_p^u)^{-1}\right\|' 
            <
            \frac{\lambda-\varepsilon}{\lambda}
            =
            1-\frac{\varepsilon}{\lambda}.
        \]
    We've shown that \eqref{eq:GraphTransformContraction} holds, which implies that $\Gamma^u$ is a contraction. We can therefore take an invariant section $s^u\in \Sigma^u$ of $\Gamma^u$. We now define our unstable bundle $E^u\subset TM$ by
        $$
            E^u(p)=\graph(s_p^u), \quad \forall p\in M.
        $$
    Note that $E^u$ is $Df$-invariant since $s^u$ is $\Gamma^u$ invariant and $\Gamma^u$ satisfies \eqref{eq:def_graphtransform}.

    We now consider the case where $\sigma=s$ to construct the stable bundle. To do this we let $\Gamma^s:\Sigma^s\to \Sigma^s$ be the linear graph transform covering $f^{-1}$. So, $\Gamma^s$ is defined by the property that for $s\in \Sigma^s, \ p\in M$,
        $$
            D_pf^{-1}(\graph(s_p))=\graph(\Gamma^s(s_p)).
        $$
    The argument that $\Gamma^s$ is a contraction map is analogous to the argument that $\Sigma^u$ is a contraction map. We write 
        $$
            D_pf^{-1}|_{\hat E^s(p)\oplus E^c(p)} =
            \begin{pmatrix}
                A^s_p & 0 \\
                C^s_p & K^s_p
            \end{pmatrix}
            : \hat E^s(p)\oplus E^s(p) \to \hat E^s(f^{-1}(p)) \oplus E^c(f^{-1}(p)),
        $$
    where $A_p^s:\hat E^s(p)\to \hat E^s(f^{-1}(p))$, $C^s_p:\hat E^s(p)\to E^c(f^{-1}(p))$, and $K_p^s:E^c(p)\to E^c(f^{-1}(p))$. Note that since $D_pf^{-1}$ is invertible, $A^s_p$ and $K_p^s$ are both invertible. 

    We then use the block form of $D_pf^{-1}|_{\hat E^s(p)\oplus E^c(p)}$ to rewrite the defining property for $\Gamma^s$ as 
        $$
            \Gamma^s s_p = (C_p^s+ K_p^s s_p)\circ (A_p^s)^{-1}, \quad \forall s\in \Sigma^s, \ p\in M
        $$
    Then, by linearity, for $s\in \Sigma^s$, $p\in M$,
        $$
            \left\| \Gamma^s s_p -\Gamma^s s_p' \right\|'
            \leq 
            \left\| K_p^s \right\|' \left\|(A_p^s)^{-1} \right\|'\left\|s_p-s'_p \right\|'
        $$
    So, to show that $\Gamma^s:\Sigma^s\to \Sigma^s$ is a contraction map, we just need to show that 
        $$
            \sup_{p\in M} \left\| K_p^s \right\|' \left\|(A_p^s)^{-1} \right\|' <1.
        $$
    To bound $\left\| K_p^s \right\|'$, we observe that $K_p^s:E^c(p)\to E^c(f^{-1}(p))$ is $K_p^s=D_pf^{-1}|_{E^c(p)}$. So,
        $$
            (K_p^s)^{-1}=\left(D_pf^{-1}|_{E^c(p)}\right)^{-1} = D_{f^{-1}(p)} f|_{E^c(f^{-1}(p))}.
        $$
    We know from \eqref{eq:distortionfibers<lambda} that for all $q\in M$, $\lambda^{-1}<m^F\left( D_qf|_{E^c(q)}\right)$. Thus, letting $q=f^{-1}(p)$, we get
        $$
            \lambda^{-1}
            < 
            m\left( D_{f^{-1}(p)} f|_{E^c(f^{-1}(p))} \right) 
            = 
            m\left( (K_p^s)^{-1}\right)
            =
            \left\| \left((K_p^s)^{-1}\right) ^{-1}\right\|'^{-1}
            =
            \left\|K_p^s\right\|'^{-1}
        $$  
    So, $\|K_p^s\|'<\lambda$. By compactness of $M$, there exists $\varepsilon>0$ such that for all $p\in M$, 
        $$
            \|K_p^s\|<\lambda-\varepsilon.
        $$

    To bound $\left\|(A_p^s)^{-1}\right\|'$, we first recall that by \eqref{eq:adapted_metric_on_B}, for all $x\in B$, $\left\|D_x\bar f |_{E^s_{\bar f}(x)}\right\|^B<\lambda^{-1}$. Since $\left(D_{\bar f(x)} \bar f^{-1} |_{E^s_{\bar f}(\bar f(x))} \right)^{-1}=D_x \bar f |_{E^s_{\bar f}(x)}$, we get that 
        $$
            m^B\left( D_{\bar f(x)} \bar f^{-1} |_{E^s_{\bar f}(\bar f(x))}\right)
            =
            \left\| \left(D_{\bar f(x)} \bar f^{-1} |_{E^s_{\bar f}(\bar f(x))} \right)^{-1} \right\|_B^{-1}
            =
            \left\|D_x \bar f |_{E^s_{\bar f}(x)} \right\|_B^{-1}
            >\lambda.
        $$
    In other words, for all $p\in M$, $v_p\in \hat E^s(p)$, we have that 
        $$
            \left\| D_{\pi(p)} \bar f^{-1}\left(D_p\pi(v_p)\right)\right\|^B
            >
            \lambda \left\| D_p \pi(v_p)\right\|^B.
        $$
    Now, we can proceed as we did in the unstable case to see that 
        \begin{align*}
            \lambda \|v_p\|'
            &=
            \lambda \left\| D_p \pi(v_p)\right\|^B
            \\
            &<
            \left\| D_{\pi(p)} \bar f^{-1}\left(D_p\pi(v_p)\right)\right\|^B
            \\
            &=
            \left\| D_{f^{-1}(p)}\pi \left( D_p f^{-1}(v_p)\right) \right\|^B
            \\
            &=
            \left\| D_{f^{-1}(p)} \pi\left( A_p^s(v_p)\right) \right\|^B
            \\ 
            &=
            \|A^s_p(v_p)\|',
        \end{align*}
    which shows that $\left\|(A_p^s)^{-1}\right\|'<\lambda^{-1}$. 

    Combining our bounds for $\|K_p^s\|'$ and $\left\|(A_p^s)^{-1}\right\|'$, we get that for all $p\in M$, $\|K_p^s\|'\left\|(A_p^s)^{-1}\right\|'<1-\frac{\varepsilon}{\lambda}$. We have therefore shown that $\Gamma^s$ is a contraction map. We can therefore take an invariant section $s^s\in \Sigma^s$ of $\Gamma^s$. We define our stable bundle $E^s\subset TM$ by
        $$
            E^s(p)=\graph(s_p^s), \quad \forall p\in M.
        $$
        
    We have now constructed $Df$ invariant subbundles $E^s, \ E^c, \ E^u$. It is immediate from the fact that $TB=E^s_{\bar f}\oplus E^u_{\bar f}$ and our construction that $TM=E^s\oplus E^c \oplus E^u$. In addition, it is clear from our construction and the definition of the metric $\langle \cdot, \cdot \rangle'$ that the bounds in \eqref{eq:smooth_model_ph_bounds} hold with respect to the metric $\langle \cdot, \cdot \rangle '$ on $M$.
\end{proof}

We can now conclude the proof of the main theorem. 
\begin{proof}[Proof of Theorem~\ref{thm:globalsmoothmodels}.]
The proof of the Theorem will now follow by plugging Lemmas \ref{lem:globalsmooth_smoothandlifttohomeo}, \ref{lem:globalsmooth_smoothinglift}, and \ref{lem:smoothmodel_liftisph} into each other. We will apply them in the following sequence of steps. 

\noindent\textbf{Step 0.} (Preparation) From Corollary \ref{cor:BohnetBonatti} it follows that $\bar f$, the quotient map of $f$ on $B$, is an Anosov homeomorphism of a nilmanifold, which admits a smooth structure $\hat B$ making $\bar f$ into a linear nilmanifold automorphism.

\noindent\textbf{Step 1.} (Smooth the bundle) Let $\lambda$ be give as in the statement of Theorem \ref{thm:globalsmoothmodels}. Now the hypotheses of Theorem \ref{thm:globalsmoothmodels} imply that we may apply Lemma \ref{lem:globalsmooth_smoothandlifttohomeo}, with our chosen $\lambda$ and the particular smoothing $\hat{B}$, $\bar{f}$ obtained in Step 0. The conclusion of the lemma then gives us the following. We consider $M\times [0,1]$. The lemma also gives us a $1$-parameter family of homeomorphisms $f_t\colon M\times [0,1]\to M\times [0,1]$ such that each $f_t$ is a map $M\times \{t\}\to M\times \{t\}$. Furthermore, the maps $f_0$ and $f_1$ have some additional special properties. First, $M\times \{0\}$ we have $f_0=f$, and hence the original smooth structure, bundle, etc., coming from $f$. On $M\times \{1\}$ there is a possibly different smooth structure on $M$, as well as a $C^{\infty}$ fibering $\pi_1\colon M\to B$ that is $C^{\infty}$ with respect to the smooth structure $\hat{B}$. Furthermore, $f_1$ fibers over $\overline{f}$ and is uniformly $C^{1}$ along the fibers, with distortion of at most $\lambda$ along the fibers with respect to some $C^{\infty}$ metric on $M\times \{1\}$. Note that $f_0$ and $f_1$ are of course freely homotopic maps of $M$. Moreover they are homotopic through topologically fibered maps that are $C^1$ along the fibers.

\noindent\textbf{Step 2.} (Smooth $f_1$). Next we apply Lemma \ref{lem:globalsmooth_smoothinglift}, which smooths $f_1$ in the direction transverse to the fibers. The lemma then gives us a free homotopy $f_{1,t}$, $t\in [0,1]$ through maps preserving the smooth fibering $\pi_1\colon M\to \hat{B}$ that all fiber over the smooth map $\bar f$, so that $f_{1,1}$ is $C^{\infty}$ and has distortion at most $\lambda$ along the fibers of $\pi_1$.

\noindent\textbf{Step 3.} (Obtain Partial Hyperbolicity and Conclude) Now the map $f_{1,1}$ is a smoothly fibered map with distortion at most $\lambda>1$ along the the fibers. Moreover, from Step 2, it fibers over an Anosov automorphism $A$ that dominates $Df_{1,1}$ on the leaves of the fibering due to the assumption on $\lambda$ in the hypothesis. Thus by Lemma \ref{lem:smoothmodel_liftisph}, $f_{1,1}$ is partially hyperbolic with $E^c$ equal to the smooth distribution $\ker \pi_{1,1}$. Finally, note that $f_{1,1}$ is freely homotopic to $f$ as $f_1$ is freely homotopic to $f$ and $f_{1,1}$ is freely homotopic $f_1$.
\end{proof}

\section{Smooth Structures on the Total Space} \label{sec:smooth structures on the total space}

Our goal in this section is to relate the smooth manifold $\hat{M}$ obtained in Theorem \ref{thm:globalsmoothmodels} to the smooth manifold $M$ assumed in the hypothesis of the theorem. 
In particular, we show that if the dimension of $M$ is at least $5$, then they are diffeomorphic after taking a finite sheeted cover. 
This is closely related to the fact that smooth manifolds homeomorphic to a given nilmanifold become diffeomorphic after taking a finite sheeted cover \cite[Appendix]{FKSD}.
In fact, our results here are a fibered version of the results of \cite[Appendix]{FKSD}.

Let $B$ be homeomorphic to a nilmanifold and let $F$ be a fixed smooth manifold.
Suppose $M$ is an $F$-bundle over $B$ with a smooth structure such that the inclusion of each fiber is smooth.
In this section, we show that, after taking a finite sheeted cover, there is a smooth structure on $B$ inducing the smooth structure on $M$.
We also show that different smooth structures on $B$ induce the same smooth structure on $M$ after taking a finite sheeted cover.

The main theorem is the following.
\begin{thm}\label{thm: smoothings of total space}
    Let $\pi\colon M\to B$ be a topological fiber bundle with fiber homeomorphic to $F$ such that the following hold:
    \begin{itemize}
        \item $\dim M\ge5$; 
        \item $M$ and $F$ are closed, smooth manifolds and there is a fiber $F_0$ that is a $C^{\infty}$ embedded submanifold that is diffeomorphic to $F$;
        \item $B$ is homeomorphic to a nilmanifold.
    \end{itemize}
    Let $\phi\colon X\to M$ be a homeomorphism where $X$ is a smooth manifold, and suppose for the fiber $F_0$ that $\phi$ is isotopic to a diffeomorphism when restricted to a neighborhood of $F_0$.\footnote{This means there is an open neighborhood of the fiber diffeomorphic to $F\times U$ ($U=\R^{\dim(B)}$) such that the map $\phi|_{\phi^{-1}(F\times U)}\colon \phi^{-1}(F\times U)\to F\times U$ is isotopic to a diffeomorphism.} Note that $\pi\circ \phi$ makes $X$ into a topological $F$ bundle over $B$. For a cover $p\colon \hat{B}\to B$, we can pull back the bundles $X$ and $M$ over $B$ to $\hat{B}$ to form topological $F$ bundles $p^*X$ and $p^*M$ over $\hat{B}$. As each of these bundles is a covering space of a smooth manifold each has a natural smooth structure. 
    
    In this setup, there exists a finite cover $\hat{B}$ of $B$ such that the following holds.   With respect to these smooth structures, the induced map on pullback bundles
    \[
    p^*\phi\colon p^*X\to p^*M
    \]
    is isotopic to a diffeomorphism.
\end{thm}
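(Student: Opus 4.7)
The plan is to reduce the question to a smoothing-theoretic obstruction computation, and then kill the obstruction by passing to a finite cover of $B$. The homeomorphism $\phi\colon X\to M$ transports the smooth structure of $X$ to a second smooth structure $\Sigma$ on the topological manifold $M$; denote the original smooth structure by $\Sigma_0$. The hypothesis that $\phi$ is isotopic to a diffeomorphism on an open neighborhood $U$ of $F_0$ says exactly that $\Sigma$ and $\Sigma_0$ are concordant on $U$. Since $\dim M\ge 5$, Kirby--Siebenmann/Hirsch--Mazur smoothing theory identifies concordance classes of smoothings of $M$ that agree with $\Sigma_0$ on $U$ with the set $[M,U;\TOP/\mathrm{O}]$, and in this dimension range concordance of smoothings implies isotopy of smoothings. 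It therefore suffices to show that the class determined by $\Sigma$ in $[M,U;\TOP/\mathrm{O}]$ pulls back to zero on some finite cover of $B$.

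By obstruction theory through a Postnikov decomposition of $\TOP/\mathrm{O}$, the vanishing of this class reduces to the vanishing of finitely many cohomological obstructions in $H^i(M,U;\pi_i(\TOP/\mathrm{O}))$. The groups $\pi_i(\TOP/\mathrm{O})$ are all finite abelian: they are zero below degree three, equal $\Z/2$ in degree three, and are finite in higher degrees by Kirby--Siebenmann together with the finiteness of the groups of exotic spheres. Applying the relative Serre spectral sequence for the bundle pair $(F,\mathrm{pt})\hookrightarrow(M,F_0)\to(B,b_0)$ with coefficients in a finite abelian group $A$ gives $E_2^{p,q}=H^p(B,b_0;H^q(F;A))$, which vanishes for $p=0$ by relativity. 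Thus each obstruction class is assembled from a finite collection of classes $\beta\in H^p(B;A')$ with $p\ge 1$ and $A'=H^q(F;\pi_i(\TOP/\mathrm{O}))$ finite abelian.

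The final step is killing such positive-degree cohomology on a finite cover of the nilmanifold base. Writing $B=N/\Gamma$, one has for every positive integer $m$ a finite-index sublattice $\Gamma_m\subseteq\Gamma$ such that the induced cover $q_m\colon N/\Gamma_m\to N/\Gamma$ annihilates $H^{\ge 1}(B;A)$ for any finite abelian $A$ whose exponent divides an appropriate power of $m$; this is the cohomological input used in the appendix of \cite{FKSD}, coming from the nilpotent lower-central-series structure of $\Gamma$ (applied layerwise via power maps $x\mapsto x^m$, or, when $\mathfrak{n}$ admits a grading, via an expanding Lie-algebra automorphism acting on Chevalley--Eilenberg cohomology with eigenvalues of modulus at least $\lambda^p$ on $H^p(B;\Q)$). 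Applied uniformly to the finitely many coefficient groups $A'$ appearing in the spectral sequence, this produces a single finite cover $p\colon\hat B\to B$ on which every $\beta$ and hence every obstruction pulls back to zero, so that $p^*\phi\colon p^*X\to p^*M$ is isotopic to a diffeomorphism. The main obstacle is the smoothing-theoretic setup in the first part: verifying carefully that the isotopy-on-$U$ hypothesis really delivers the relative class in $[M,U;\TOP/\mathrm{O}]$, and that its pullback under a fibered cover is computed by the pullback of the cohomological obstructions. The cover-existence step is then a fibered application of the main technical input of \cite[Appendix]{FKSD}.
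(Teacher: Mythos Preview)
Your proposal is correct and follows the same overall strategy as the paper: translate via Kirby--Siebenmann smoothing theory, use a spectral sequence for the bundle $F\to M\to B$ to see that the obstruction is carried by positive-degree cohomology of $B$ with finite coefficients (coming from the finiteness of $\pi_*(\TOP/O)$), and then invoke the nilmanifold cover lemma of \cite[Appendix]{FKSD} to kill it. The packaging differs: the paper works directly with the generalized cohomology theory $\mathbf{E}^*(-)=[-,\TOP/O]$ and its Atiyah--Hirzebruch--Serre spectral sequence, shows the class $\xi$ lies in the filtration step $F_1$ via the fiber-inclusion map, and then inductively pushes $\xi$ down the filtration by killing its image in each $E_\infty^{i,-i}$; you instead unwind $\TOP/O$ through its Postnikov tower and use the ordinary (relative) Serre spectral sequence with coefficients $\pi_i(\TOP/O)$, building in the fiber hypothesis via the relative pair $(M,U)$ from the start. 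Your route is a bit more elementary but you should be explicit that the higher Postnikov obstructions are only defined inductively (the degree-$(i{+}1)$ obstruction depends on the null-homotopy chosen in degree $i$), so one must pass to successive finite covers rather than ``uniformly'' to a single cover in one step; the paper's spectral-sequence filtration argument handles this bookkeeping more transparently.
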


The outline for the proof of Theorem \ref{thm: smoothings of total space} is as follows.
First we use smoothing theory to identify smooth structures on $M$ with a generalized cohomology theory which we denote $\mathbf{E}$; this requires that $M$ is a high dimensional manifold.\footnote{A generalized cohomology theory $\mathbf{E}$ associates to a space $X$ a collection abelian group $\mathbf{E}^n(X)$, where $n\in\Z$. These satisfy axioms which make them computable using tools such as excision.}
In particular, a map such as $\phi$ in Theorem \ref{lem:smoothmodel_liftisph} represents an element of an abelian group $\mathbf{E}^0(M)$ and $p^*\phi$ represents an element of an abelian group $\mathbf{E}^0(p^*M)$.
Algebraically, our goal is to show that $p^*\phi$ represents the trivial element, which will imply the domain and codomain of $p^*\phi$ are diffeomorphic.
Next, we use the Atiyah--Hirzebruch--Serre spectral sequence to study the behavior of this cohomology theory on the total space $M$.
Heuristically, the spectral sequence allows us to understand $\mathbf{E}(M)$ by piecing together what we know about $\mathbf{E}(F)$ and $\mathbf{E}(B)$.
The hypothesis that $\phi$ is isotopic to a diffeomorphism near a fiber tells us that $\mathbf{E}(F)$ does not contribute anything so it remains to analyze $\mathbf{E}(B)$.
Since $B$ is homeomorphic to a nilmanifold, we understand how the singular cohomology of $B$ behaves after taking finite sheeted covers, and we use this to understand how $\mathbf{E}(B)$ behaves after taking finite sheeted covers.

\subsection{An overview of smoothing theory}\label{subsection: overview of smoothing theory}
In this subsection, we summarize the relevant definitions and results of smoothing theory.
A detailed treatment can be found in \cite{KirbySiebenmann}.
\begin{definition}
    Let $M$ be a closed topological manifold.
    A \emph{smooth structure} on $M$ is a homeomorphism $f\colon M_0\to M$ where $M_0$ is a smooth manifold.
    Two representatives of smooth structures $f_i\colon M_i\to M$, $i=0,1$ are \emph{isotopic} if there is a diffeomorphism $\varphi\colon M_1\to M_0$ so that $f_0$ is isotopic to $f_1\circ\varphi$.
    We let $\mc{S}^{{TOP}\!/\!{O}}(M)$ denote the isotopy classes of smooth structures on $M$.
\end{definition}

\begin{rem}
    It is possible for smooth structures $(M_0,f_0)$ and $(M_1,f_1)$ to be non-isotopic even when $M_0$ and $M_1$ are diffeomorphic.
    So this more refined than considering diffeomorphism classes of smooth manifolds homeomorphic to a given manifold $M$, which some also refer to as smooth structures on $M$.
    Below, when we use the term ``smooth structure'' we will always mean an isotopy class of smooth structure as above.
\end{rem}

A topological $n$-microbundle over $M$ is essentially a germ of a neighborhood of $M$ embedded as a codimension $n$ submanifold of a larger manifold $E$.
We refer to \cite{milnor1964microbundles,KirbySiebenmann} for a rigorous definition and background as we do not need these here.
Given a topological $n$-manifold $M$, the tangent microbundle is the germ of the neighborhood of $\Delta(M)$ in $M\times M$ where $\Delta\colon M\to M\times M$ is the diagonal.
In practice, the Kister--Mazur theorem \cite[p.~159]{KirbySiebenmann} allows us to regard microbundles as $\R^n$-bundles with a zero section.

Kirby--Siebemann show that isotopy classes of smooth structures on a high dimensional manifold $M$ are in bijection with linear reductions of the tangent microbundle, i.e.\ they correspond to different ways of endowing the tangent microbundle with the structure of a vector bundle.
In terms of classifying spaces, these reductions correspond to homotopy classes of lifts of the diagram
\[
\begin{tikzpicture}[scale=2]
\node (A) at (2,1) {${BO}(n)$};
\node (B) at (0,0) {$M$};\node (C) at (2,0) {${BTOP}(n)$};
\path[->] (A) edge (C) (B) edge node[above]{$\tau$} (C) (B) edge [dashed] (A);
\end{tikzpicture}
\]
where $\tau$ denotes the classifying map of the tangent microbundle and ${TOP}(n)$ denotes the topological group of homeomorphisms of $\R^n$ preserving the origin.
The homotopy fiber of the vertical map is denoted ${TOP}(n)/{O}(n)$.
Kirby--Siebenmann further show that one only needs to consider the tangent microbundle as a stable microbundle \cite[Essay~IV]{KirbySiebenmann} (i.e., it suffices to understand the direct sum of the tangent microbundle with a large trivial bundle).
As a consequence, they obtain the following theorem.

\begin{thm}\label{thm: Kirby-Siebenmann} \cite[Essay IV, Thm.~10.1]{KirbySiebenmann}
    If $\dim M\ge5$, then a smooth structure $\eta$ of $M$ determines a bijection
    \[
    \mc{S}^{TOP/O}(M)\to [M,TOP/O]
    \]
    where $\eta$ is sent to the homotopy class of the constant map.
\end{thm}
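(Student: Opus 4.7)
The statement is a high-level packaging of the Kirby--Siebenmann classification, and the plan is to reconstruct it along the standard three-layer route: (i) replace smooth structures on $M$ by smoothings of its tangent microbundle, (ii) replace smoothings of the tangent microbundle by lifts of its classifying map along $BO \to BTOP$, and (iii) read off the set of lifts with obstruction theory. Fix a basepoint smoothing $\eta \colon M_0 \to M$ throughout; its role is to pick out the identity element and so promote the would-be bijection into a pointed bijection.

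First I would set up the microbundle side. By the Kister--Mazur theorem the tangent microbundle $\tau_M$ is represented by an honest $\R^n$-bundle, and a smooth structure on $M$ equips a neighborhood of the diagonal in $M \times M$ with a smooth atlas for which the projections are submersions; restricting gives $\tau_M$ the structure of a smooth $\R^n$-bundle, hence of a vector bundle (using the Kister--Mazur uniqueness up to isotopy to straighten the fibers). This produces a map $\Phi \colon \mc{S}^{TOP/O}(M) \to \{\text{vector bundle reductions of } \tau_M\}/\text{concordance}$. The basepoint $\eta$ goes to the tautological reduction.

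The hard step is surjectivity and injectivity of $\Phi$, which is precisely the content of the Kirby--Siebenmann \emph{Product Structure Theorem}: given $M$ and a vector-bundle reduction of a stabilization $\tau_M \oplus \epsilon^k$, there is a smoothing of $M$ (unique up to isotopy) whose induced reduction is concordant to the given one. This uses the hypothesis $\dim M \ge 5$ in an essential way, both to run the handle-by-handle Kirby torus trick on each chart and to ensure that concordance implies isotopy of smoothings (via a relative version applied to $M \times [0,1]$). This is the one place where I would not attempt to reprove the input; I would cite \cite[Essay~IV]{KirbySiebenmann} as a black box and stress that stabilizing the microbundle is harmless because a trivial summand always admits a canonical vector-bundle reduction, so stable and unstable reductions biject in this range.

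With the Product Structure Theorem in hand, the remaining step is purely homotopical. Vector-bundle reductions of $\tau_M$ up to concordance correspond bijectively to homotopy classes of lifts in the classifying diagram
\[
\begin{tikzpicture}[scale=1.8]
\node (A) at (2,1) {$BO$};
\node (B) at (0,0) {$M$};
\node (C) at (2,0) {$BTOP$};
\path[->] (A) edge (C);
\path[->] (B) edge node[above]{$\tau$} (C);
\path[->,dashed] (B) edge (A);
\end{tikzpicture}
\]
and the basepoint reduction coming from $\eta$ gives one such lift $\ell_\eta$. Standard fibration theory (the fibration $TOP/O \to BO \to BTOP$) identifies homotopy classes of lifts of a fixed map, once a basepoint lift is chosen, with $[M, TOP/O]$; concretely, a second lift $\ell$ differs from $\ell_\eta$ by a fiberwise homotopy, and choosing such a homotopy determines a map $M \to TOP/O$ well-defined up to homotopy, the trivial element corresponding to $\ell = \ell_\eta$, i.e.\ to $\eta$ itself. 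Composing the bijections of the three steps yields the claimed bijection $\mc{S}^{TOP/O}(M) \to [M, TOP/O]$ sending $\eta$ to the constant map. The only genuine analytic work hides inside step (ii); the rest is formal bookkeeping in obstruction theory.
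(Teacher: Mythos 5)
Your outline is correct and matches the paper's treatment: the paper likewise presents this as a citation to Kirby--Siebenmann, with the same chain of identifications (smooth structures $\leftrightarrow$ linear reductions of the stable tangent microbundle via Kister--Mazur and the Product Structure Theorem $\leftrightarrow$ lifts of $\tau\colon M\to BTOP$ along $BO\to BTOP$ $\leftrightarrow$ $[M,TOP/O]$ by fibration theory). The hard analytic input is deferred to \cite[Essay~IV]{KirbySiebenmann} in both cases, so there is nothing to add.
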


\noindent If $\eta$ is a smooth structure on $M$, will use $\mc{S}^{TOP/O}(M,\eta)$ to denote the pointed set $\mc{S}^{TOP/O}(M)$ with distinguished element $\eta$.

The set $[M,TOP/O]$ denotes pointed homotopy classes of maps.
The space $TOP/O$ is an infinite loop space, so there are pointed spaces $E_i$ such that $\Omega^iE_i\simeq TOP/O$ for all $i\ge0$.
For $i<0$, define $E_i=\Omega^iTOP/O$; the fact that $TOP/O$ is an infinite loop space implies that the $E_i$ form an $\Omega$-spectrum.
It follows that the functors $\mathbf{E}^i(X):=[X,E_i]$ are valued in abelian groups and define a generalized additive cohomology theory. 
We refer to \cite[Chapter 4.3]{Hatcher} for more details.

\begin{rem}
    Even when $M$ has dimension less than $5$, isotopic smoothings determine the same map $M\to TOP/O$.
    We will implicitly use this in the proof of Theorem \ref{thm: smoothings of total space} below in order to avoid the assumption that the fiber $F$ is high dimensional.
    The condition that $\phi$ is isotopic to a diffeomorphism near a fiber may be relaxed to say that, in a neighborhood $N$ of $F$, $\phi\colon\phi^{-1}(N)\times\R^m\to N\times\R^m$ is isotopic to a diffeomorphism for some $m$.
    This is only stronger than the stated condition in Theorem \ref{thm: smoothings of total space} when $\dim F=4$.
\end{rem}

\subsection{Spectral sequences}
We briefly discuss spectral sequences, which will be used to analyze generalized cohomology theories.
Given a fiber bundle of connected manifolds $F\to M\to B$, the Atiyah--Hirzebruch--Serre spectral sequence computes the generalized cohomology theory of $\mathbf{E}^*(M)$.
We refer to \cite[Chapter 9.5]{DavisKirk} for details as well as the notation, which is standard. Because $\mathbf{E}^*$ is an additive generalized cohomology theory, it gives rise to spectral sequence that computes $\mathbf{E}^*(M)$ (\cite[Thm.~9.22]{DavisKirk}). 

The spectral sequence is denoted
\[
E_2^{i,j}=H^i(B;\mathbf{E}^j(F))\Rightarrow\mathbf{E}^{i+j}(M). 
\]
The groups $H^i(B;\mathbf{E}^j(F))$ are the (possibly twisted) singular cohomology groups of $B$: As we have the fibration, the action of $\pi_1(B)$ on $\mathbf{E}^j(F)$ gives a local coefficient system over $B$, which may be twisted.

To say there is a convergent spectral sequence as above means there are the following:
\begin{itemize}
\item Abelian groups $B_r^{i,j}\subseteq C_r^{i,j}\subseteq E_r^{i,j}$ for integers $r\ge2$ where $E_{r+1}^{i,j}\cong C_r^{i,j}/B_r^{i,j}$.
In this case, we call $E_{r+1}^{i,j}$ a subquotient of $E_r^{i,j}$.

\item Isomorphisms $E_r^{i,j}\cong E_{r+1}^{i,j}$ for sufficiently large $r$ (in our case, we may take $r\ge\dim M$).
We denote $E_{\infty}^{i,j}:=E_r^{i,j}$ when $r$ is large enough that these groups stabilize.

\item A filtration
\[
\cdots \subseteq F_i^n\subseteq F_{i-1}^n\subseteq \cdots \subseteq  F_1^n\subseteq F_0^n=\mathbf{E}^n(M)
\]
of $\mathbf{E}^n(M)$ by abelian groups such that $F_i^n/F_{i+1}^n\cong E_{\infty}^{i,n-i}$.
\end{itemize}

\begin{rem}
    The groups $B_r^{i,j}$ and $C_r^{i,j}$ are determined by the differentials of the spectral sequence, which are maps $E_r^{i,j}\to E_r^{i+r,j-r+1}$.
    In what follows, we will not need these differentials so the reader only needs to know the properties above.
\end{rem}

\begin{rem}
    It follows that the groups $E_r^{i,j}$ are subquotients of $E_2^{i,j}$.
    In particular, there are subgroups $B^{i,j}\subseteq C^{i,j}\subseteq E_2^{i,j}$ such that $E_{\infty}^{i,j}\cong C^{i,j}/B^{i,j}$. 
\end{rem}

\begin{rem}
    In the cases we consider, only finitely many of the $F_i^n$ will be nonzero for each fixed $n$ since $H^i(B;\mathbf{E}^j(F))=0$ when $i>\dim B$.
    If infinitely many of these are nonzero, then our definition of convergence does not agree with other definitions \cite{BoardmanSS}.
\end{rem}

An important property of the spectral sequence that we will need is functoriality.
Specifically, if $F'\to M'\to B'$ is another bundle and $f\colon M'\to M$ is a map of bundles, then there are induced homomorphisms between the groups $B_r^{i,j}$, $C_r^{i,j}$ and $E_r^{i,j}$ of the respective spectral sequences.
In particular, there are induced maps
\[
H^i(B;\mathbf{E}^j(F))\to H^i(B';\mathbf{E}^j(F')).
\]
If $F=F'$ and the action of $\pi_1(B)$ and $\pi_1(B')$ are trivial on $\mathbf{E}^j(F)$, then this induced map is just the induced map on singular cohomology.
Conceptually, this naturality follows from the fact that the induced map $\mathbf{E}(M)\to\mathbf{E}(M')$ respects the filtration that gives rise to the spectral sequence.
An explicit proof of the analogous fact for singular homology is given in \cite[Section 5.3]{McClearySS}.

\subsection{Proof of Theorem \ref{thm: smoothings of total space}}
To study smooth structures of bundles $F\to M\xrightarrow{\pi} B$ where $B$ is a nilmanifold, we need the following fact (see \cite[Lem.~A.4]{FKSD}). 

\begin{lem}\label{lem: finite sheeted covers and cohom}
    Suppose $B$ is homeomorphic to a nilmanifold.
    Then, for any $i>0$, any finite abelian group $A$, any finite sheeted cover $\hat{B}\to B$ and any element $x\in H^i(\hat{B};A)$, there is a finite sheeted cover $p\colon \tilde{B}\to\hat{B}$ such that $p^*x=0$.
\end{lem}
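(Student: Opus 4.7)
My plan is to establish the result by induction on the nilpotency class, reducing to the case of tori where a direct multiplication-by-$m$ cover suffices. Since a finite sheeted cover of a space homeomorphic to a nilmanifold is itself homeomorphic to a nilmanifold (such covers correspond to finite-index subgroups of the fundamental group, which remain torsion-free and nilpotent), it suffices to prove the following: for every nilmanifold $\hat{B}$, every finite abelian group $A$, every $i > 0$, and every $x \in H^i(\hat{B}; A)$, there is a finite sheeted cover $p \colon \tilde{B} \to \hat{B}$ with $p^*x = 0$. I then induct on the nilpotency class $c$ of $\hat{B}$.

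In the base case $c = 1$, the space $\hat{B}$ is a torus $T^d = \R^d/\Z^d$. The multiplication-by-$m$ map $[m] \colon T^d \to T^d$ is an $m^d$-sheeted cover, and $[m]^*$ acts by multiplication by $m$ on $H^1(T^d; \Z)$, hence by $m^i$ on $H^i(T^d; A) \cong \Lambda^i A^d$. Choosing $m$ to be any multiple of the exponent of $A$ annihilates every class in $H^i(T^d; A)$ with $i \geq 1$ simultaneously.

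For the inductive step, a nilmanifold $\hat{B}$ of nilpotency class $c > 1$ admits a principal torus bundle structure $T^d \to \hat{B} \xrightarrow{\pi} \overline{B}$, where $\overline{B}$ is a nilmanifold of strictly smaller nilpotency class obtained by quotienting by the torus coming from the center of the ambient nilpotent Lie group. Because this bundle is principal with connected fiber, the monodromy action of $\pi_1(\overline{B})$ on $H^*(T^d; A)$ is trivial, yielding an untwisted Serre spectral sequence
\[
E_2^{p,q} = H^p(\overline{B}; H^q(T^d; A)) \Longrightarrow H^{p+q}(\hat{B}; A),
\]
together with a finite decreasing filtration $H^i(\hat{B}; A) = F^0 \supseteq F^1 \supseteq \cdots \supseteq F^{i+1} = 0$ whose associated graded pieces $F^p/F^{p+1} \cong E_\infty^{p,i-p}$ are subquotients of $E_2^{p,i-p}$.

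I reduce $x$ through this filtration one step at a time. Suppose $x \in F^p$; I seek a finite cover on which $x$ pulls back into $F^{p+1}$. The image $[x] \in E_\infty^{p,i-p}$ lifts to some $\tilde{x} \in E_2^{p,i-p} = H^p(\overline{B}; H^{i-p}(T^d; A))$, a finite abelian group. If $p \geq 1$, the inductive hypothesis, applied to $\overline{B}$ and to the finite coefficient group $H^{i-p}(T^d; A)$, supplies a finite cover $\tilde{\overline{B}} \to \overline{B}$ killing $\tilde{x}$; pulling $\pi$ back along this cover produces a finite cover of $\hat{B}$ on which, by naturality of the Serre spectral sequence, the pullback of $x$ lands in $F^{p+1}$. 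If $p = 0$, the class lives in $E_2^{0,i} = H^i(T^d; A)$ and is annihilated by a fiberwise multiplication-by-$m$ cover exactly as in the base case. Iterating at most $i+1$ times yields, by composition, a single finite cover of $\hat{B}$ on which $x$ pulls back into $F^{i+1} = 0$. The main subtleties I expect are the naturality of the spectral-sequence filtration under covers (already used in the excerpt for the Atiyah--Hirzebruch--Serre sequence) and arranging the principal torus bundle structure cleanly; any need to pass to a finite cover before extracting this structure can be harmlessly absorbed into the very first stage of the iteration.
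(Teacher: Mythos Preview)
The paper does not supply its own proof of this lemma; it simply cites \cite[Lem.~A.4]{FKSD}. Your argument by induction on nilpotency class, using the central torus fibration and the Serre spectral sequence to push the class down the filtration, is a correct and standard route to the result and is almost certainly what the cited reference does.

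Two small points merit attention. First, the identification $H^i(T^d;A)\cong\Lambda^i A^d$ is not literally correct; what you want is $H^i(T^d;A)\cong(\Lambda^i\Z^d)\otimes A$ via universal coefficients, on which $[m]^*$ indeed acts by $m^i$. Second, and more substantively, in the $p=0$ step you invoke a ``fiberwise multiplication-by-$m$'' cover of the nilmanifold $\hat B=N/\Gamma$, i.e.\ a finite-index subgroup $\Gamma'\le\Gamma$ with $\Gamma'\cap\Lambda\subseteq m\Lambda$, where $\Lambda=\Gamma\cap Z(N)$. Unlike the torus base case this is not automatic: the central extension $1\to\Lambda\to\Gamma\to\overline\Gamma\to1$ need not split modulo $m\Lambda$, so one cannot simply multiply by $m$ on the fiber while leaving the base fixed. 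The existence of such a $\Gamma'$ does follow, however, from subgroup separability of finitely generated nilpotent groups: $m\Lambda$ is closed in the profinite topology on $\Gamma$, and since $\Lambda/m\Lambda$ is finite one can choose a single finite-index subgroup $\Gamma'\ge m\Lambda$ avoiding a set of coset representatives, forcing $\Gamma'\cap\Lambda=m\Lambda$. The resulting cover restricts to $[m]$ on fibers (possibly also covering the base nontrivially, which is harmless for the spectral-sequence naturality). With this justification added, your proof is complete.
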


Recall that $\mathbf{E}^*$ is the generalized cohomology theory defined by maps into the sequence of spaces $E_i$ with $E_0=TOP/O$ as above. Letting $F_i(\pi)$ be the $F_i^0$ from applying the Atiyah--Hirzebruch--Serre spectral sequence for $\pi$:
\[
F_m(\pi)\subseteq F_{m-1}(\pi)\subseteq\cdots\subseteq F_1(\pi)\subseteq F_0(\pi)=\mathbf{E}^0(M)\cong\mc{S}^{TOP/O}(M,\eta)
\]
be the filtration of $\mathbf{E}^0(M)$, where $m$ is the dimension of the base space. 
We will need a more explicit description of this filtration.
Let $B^{(i)}$ denote the $i$-skeleton of $B$ and let $M^i:=\pi^{-1}B^{(i)}$ be the bundle over the $i$-skeleton.
There is a restriction $\mathbf{E}^*(M)\to\mathbf{E}^*(M^i)$ and, with the notation above, $F_i(\pi)=\ker(\mathbf{E}^0(M)\to\mathbf{E}^0(M^{i-1}))$ for $i\ge 1$.

Let $E_\infty^{i,j}(\pi)$ denote the $E_\infty^{i,j}$ term of the spectral sequence for $\pi$.
Let $\xi\in\mc{S}^{TOP/O}(M,\eta)$ be the smooth structure determined by the homeomorphism $\phi\colon M'\to M$.
Our proof consists of two parts.
The first part is showing that $\xi\in F_1$.
Heuristically, this means that $\xi$ comes from the cohomology of the base space.
The second part is showing that pulling back along covers of the base causes $\xi$ to vanish.

\begin{proof}[Proof of Theorem \ref{thm: smoothings of total space}]
The group $E_\infty^{0,j}(\pi)$ is a subgroup of $E_2^{0,j}(\pi)\cong H^0(B;\mathbf{E}^j(F))\cong\mathbf{E}^j(F)$ (this follows from the fact that $E^{i,j}_r=0$ for $i<0$ and from the fact that the differentials go from left to right).
Consider the map $\mathbf{E}^0(M)\to\mathbf{E}^0(F)$ induced by the inclusion $F\to M$.
Since we have assumed that the restriction of $\phi$ to a neighborhood of $F$ is isotopic to a diffeomorphism, the image of $\xi$ in $\mathbf{E}^0(F)$ is $0$ (by Theorem \ref{thm: Kirby-Siebenmann}).
We may regard $F$ as a bundle over a point with fiber $F$ and we may consider the inclusion of $F$ in $M$ as a map of $F$-bundles.
There is an induced map of spectral sequences.
In particular, we obtain a map on $E_2^{0,0}$:
\[
H^0(B;\mathbf{E}^0(F))\xrightarrow{\cong}H^0(pt;\mathbf{E}^0(F)).
\]
But now, we have the following commuting diagram.
\[
\begin{tikzpicture}[scale=2]
\node (A) at (0,1) {$E^{0,0}_{\infty}(\pi)$};\node (B) at (1,1) {$\mathbf{E}^0(F)$};
\node (C) at (0,0) {$E^{0,0}_2(\pi)$};\node (D) at (1,0) {$\mathbf{E}^0(F)$};
\path[->] (A) edge (B) (A) edge (C) (B) edge node[right]{$=$} (D) (C) edge node[above]{$\cong$} (D);
\end{tikzpicture}
\]
The vertical arrows are inclusions of $E_{\infty}^{0,0}\subseteq E_2^{0,0}$ and the terms on the right hand side are obtained by considering the spectral sequence
\[
E_2^{i,j}=H^i(pt;\mathbf{E}^j(F))\Rightarrow \mathbf{E}^{i+j}(F)
\]
where the only nonzero terms are $i=0$ and $E_2^{i,j}=E_{\infty}^{i,j}$.
The horizontal arrows arise from the map of spectral sequences.
The bottom map is the isomorphism above so the map $E^{0,0}_{\infty}(\pi)\to \mathbf{E}^0(F)$ is injective.
Recall that $E_{\infty}^{0,0}(\pi)=F_0(\pi)/F_1(\pi)$ where $F_0(\pi)=\mathbf{E}^0(M)$.
By recalling that the filtration on $\mathbf{E}^0(M)$ is defined via kernels of restrictions, one can check that the restriction $\mathbf{E}^0(M)\to\mathbf{E}^0(F)$ factors through $E^{0,0}_\infty(\pi)$ so $\xi\in F_1(\pi)$.

We now consider the image of $\xi$ in $F_1(\pi)/F_2(\pi)=E_\infty^{1,-1}(\pi)$ and show that it vanishes after taking a finite sheeted cover.
Let $\alpha$ denote the image of $\xi$ in $E_\infty^{1,-1}(\pi)$.
Let $\alpha'\in C^{1,-1}\subseteq H^1(B;\mathbf{E}^{-1}(F))$ be an element which maps to $\alpha$ in the quotient $C^{1,-1}/B^{1,-1}$.
Let $p\colon\hat{B}\to B$ be a finite sheeted cover.
Pulling back the bundle $\pi$ along this cover gives a bundle $F\to\hat{M}\xrightarrow{\hat{\pi}}\hat{B}$ and a map of bundles $\hat{M}\to M$.

The induced map on spectral sequences yields the following diagram.
\[
\begin{tikzpicture}[scale=2]
    \node (A) at (0,1) {$C^{1,-1}(\pi)$};\node (B) at (2,1) {$C^{1,-1}(\hat{\pi})$};
    \node (C) at (0,0) {$E_{\infty}^{1,-1}(\pi)$};\node (D) at (2,0) {$E_{\infty}^{1,-1}(\hat{\pi})$};
    \path[->] (A) edge (B) (A) edge (C) (B) edge (D) (C) edge (D);
\end{tikzpicture}
\]
Recall that $\mathbf{E}^{-1}(F)$ is a finite group.
Thus we may take our finite sheeted cover $\hat{B}$ such that, on the pullback bundle, $\pi_1\hat{B}$ acts trivially on $\mathbf{E}^{-1}(F)$, allowing us to apply Lemma \ref{lem: finite sheeted covers and cohom}.
By Lemma \ref{lem: finite sheeted covers and cohom}, we may take assume
\[
p^*\colon H^1(B;\mathbf{E}^{-1}(F))\to H^1(\hat{B};\mathbf{E}^{-1}(F))
\]
sends $\alpha'\in C^{1,-1}$ to $0$.
By the diagram above, we see that $p^*\alpha=0$.
Therefore, $p^*\alpha\in F_1(\hat{\pi})\subseteq\mathbf{E}^0(\hat{M})$.

Continuing inductively shows that there is a finite sheeted cover $p\colon\hat{B}\to B$ such that $p^*\xi=0\in\mathbf{E}^0(\hat{M})$.\footnote{It is important that $B$ is finite dimensional for this process to terminate. If $B$ were not finite dimensional, then this argument would not work.}
To complete the proof, we remark that the diagram
\[
\begin{tikzpicture}[scale=2]
    \node (A) at (0,1) {$\mc{S}^{TOP/O}(M,\eta)$};\node (B) at (2,1) {$\mc{S}^{TOP/O}(\hat{M},p^*\eta)$};
    \node (C) at (0,0) {$\mathbf{E}^0(M)$};\node (D) at (2,0) {$\mathbf{E}^0(\hat{M})$};
    \path[->] (A) edge node[above]{$p^*$} (B) (A) edge (C) (B) edge (D) (C) edge node[above]{$p^*$} (D);
\end{tikzpicture}
\]
commutes where the smooth structure $p^*\eta$ is given by the homeomorphism $p^*\phi\colon p^*X\to p^*M$.
The commutativity of this diagram follows from the fact that the tangent microbundle of $\hat{M}$ is the pullback of the tangent microbundle of $M$.
\end{proof}
\subsection{Applications}

\begin{cor}
Let $\pi\colon M\to B$ be a topological fiber bundle with fiber $F$.
Suppose $F$ and $M$ are closed, smooth manifolds and that $B$ is homeomorphic to a nilmanifold (we do not assume $\pi$ is smooth).
Suppose also that $\dim M\ge5$.
Let $A\colon B\to B$ be a homeomorphism which lifts to a homeomorphism $f\colon M\to M$.
Suppose that there is a fiber $F_0$ of $\pi$ such that, in a neighborhood $U$ of $F_0$, $f|_{f^{-1}(U)}:f^{-1}(U)\to U$ is isotopic to a diffeomorphism.
Then there is a finite sheeted cover $p\colon\hat{B}\to B$ such that, for each $\varepsilon>0$, there is a diffeomorphism $f_\varepsilon\colon p^*M\to p^*M$ such that $d(f_\varepsilon(x),p^*f(x))<\varepsilon$ for all $x\in p^*M$.
\end{cor}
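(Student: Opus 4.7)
The plan is to reduce this corollary to Theorem \ref{thm: smoothings of total space} and then apply a standard approximation theorem from smoothing theory.

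First, I would reduce to the case in which $A$ lifts to every finite cover we take. Since $B$ is homeomorphic to a nilmanifold, $\pi_1(B)$ is a finitely generated nilpotent group and admits characteristic subgroups of arbitrarily large finite index (for example, the subgroups generated by $n$th powers and iterated commutators, or the verbal subgroups). Any automorphism $A_\ast$ preserves such a characteristic subgroup, so $A$ lifts to a homeomorphism $\hat A$ of the corresponding finite cover. I would replace $B$ with such a cover from the outset; then any further finite cover of $B$ that we produce in the next step will automatically carry a lift of $A$.

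Second, I would apply Theorem \ref{thm: smoothings of total space} with $\phi = f$, taking the domain $X = M$ equipped with its given smooth structure and viewed as an $F$-bundle over $B$ via $\pi\circ f = A\circ \pi$. The fiber hypothesis of Theorem \ref{thm: smoothings of total space} is exactly the assumption that $f|_{f^{-1}(U)}$ is isotopic to a diffeomorphism on a neighborhood of $F_0$. Because $A$ lifts to $\hat A$ on any further cover, the pullback $p^*f\colon p^*M\to p^*M$ is a genuine self-homeomorphism of a smooth manifold, and Theorem \ref{thm: smoothings of total space} delivers a finite cover $p\colon \hat B \to B$ such that $p^*f$ is isotopic through homeomorphisms to some diffeomorphism $g\colon p^*M\to p^*M$.

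Third, I would convert the isotopy into a $C^0$-approximation. The homeomorphism $h := (p^*f)\circ g^{-1}$ is isotopic to $\id_{p^*M}$ through homeomorphisms. Since $\dim p^*M \ge 5$ (and in particular $\ne 4$), I would invoke the \v{C}ernavskii--Edwards--Kirby local contractibility of $\Homeo(p^*M)$ together with the Kirby--Siebenmann approximation theorem \cite{KirbySiebenmann} to conclude that diffeomorphisms are $C^0$-dense among homeomorphisms isotopic to the identity. Choosing a diffeomorphism $\psi$ with $d(\psi, h)$ sufficiently small (relative to the modulus of continuity of $g$), the composition $f_\varepsilon := \psi\circ g$ is a diffeomorphism within $\varepsilon$ of $p^*f$, as desired.

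The main obstacle will be this final step. Theorem \ref{thm: smoothings of total space} produces only an isotopy, which need not be small, so extracting a $C^0$-close diffeomorphism is a genuine appeal to high-dimensional topology and uses the hypothesis $\dim M \ge 5$ essentially. The remaining steps, including the bookkeeping to arrange that $A$ lifts and that $\pi\circ f$ versus $\pi$ give compatible bundle structures on $M$, are routine once the relationship $p^*X \cong \hat A^*(p^*M)$ is unpacked.
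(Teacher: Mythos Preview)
Your approach matches the paper's: apply Theorem~\ref{thm: smoothings of total space} with $\phi=f$, then invoke a $C^0$-approximation theorem (the paper cites M\"uller~\cite{Muller}) to produce $f_\varepsilon$.

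There is one genuine slip. Step~1 does not work in the order you give. Replacing $B$ by a single characteristic cover at the outset does \emph{not} guarantee that the further cover produced by Theorem~\ref{thm: smoothings of total space} is $A_*$-invariant; being characteristic in $\pi_1(B)$ is not inherited by arbitrary further finite-index subgroups. The correct order is the reverse: first apply the theorem to obtain some cover $\hat B\to B$ on which $p^*f$ is isotopic to a diffeomorphism, and \emph{then} pass to a further cover corresponding to a characteristic finite-index subgroup of $\pi_1(B)$ contained in $p_*\pi_1(\hat B)$ (such a subgroup exists because $\pi_1(B)$ is finitely generated, hence has only finitely many subgroups of each index, and their intersection is characteristic). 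Isotopy-to-a-diffeomorphism pulls back along this refinement, and on the refined cover $p^*X$ and $p^*M$ are literally the same covering space of $M$, so $p^*f$ becomes a self-homeomorphism. The paper glosses over this entirely; you were right to flag it, but your proposed fix is backward.

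In Step~3, local contractibility of $\Homeo$ is not the relevant input: that result concerns small isotopies of homeomorphisms near the identity, not $C^0$-density of diffeomorphisms in an isotopy class. The approximation theorem you also cite already says directly that a homeomorphism isotopic to a diffeomorphism can be uniformly approximated by diffeomorphisms, so apply it to $p^*f$ itself and skip the detour through $h=(p^*f)\circ g^{-1}$.
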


\begin{proof}
    By Theorem \ref{thm: smoothings of total space}, we may take a cover $p\colon\hat{B}\to B$ so that $p^*f$ is isotopic to a diffeomorphism.
    Then it may be uniformly approximated by diffeomorphisms \cite{Muller}.
\end{proof}

The interesting case is when $A$ is an Anosov diffeomorphism.
Then, the map $p^*f$ above almost covers the pullback of the Anosov diffeomorphism $p^*A$ in the sense that $d_B(p^*\pi(f_\varepsilon(x)),p^*A(p^*\pi(x)))<\varepsilon$.
It would be interesting to understand when $f_\varepsilon$ is partially hyperbolic.

Before stating the next application, we mention a subtlety in smooth structures and fiber bundles.
Given a topological fiber bundle $F\to M\to B$ and a smooth structure on $M$, there does not necessarily exist a smooth structure on $B$ such that the projection is smooth.
In fact, there does not necessarily exist a smooth structure on $B$ at all.

A well-known example of a non-smoothable manifold is the $E_8$-manifold, which we denote by $M_{E_8}$.
This is the unique closed, oriented, simply connected $4$-manifold whose intersection form $H_2(M_{E_8})\otimes H_2(M_{E_8})\to \Z$ is the $E_8$-lattice.
Rokhlin's Theorem restricts the possible signatures of the intersection form of smooth closed $4$-manifolds.
These restrictions imply that $M_{E_8}$ is not smoothable.

Now consider the connect sum $M_{E_8}\#M_{E_8}$.
This manifold satisfies properties similar to $M_{E_8}$; it is a closed, oriented, simply connected $4$-manifold which does not admit a smooth structure by \cite{Donaldson}.
Unlike $M_{E_8}$, $M_{E_8}\#M_{E_8}$ has a vanishing Kirby--Siebenmann invariant, which is an element of $H^4(M;\Z/2)$ and is the only obstruction to finding a linear reduction of the topological microbundle when $M$ is a $4$-manifold.
Now, if $F$ is a smooth manifold of dimension at least $1$, then $M:=F\times(M_{E_8}\#M_{E_8})$ is smoothable by Theorem \ref{thm: Kirby-Siebenmann} and \cite[Essay IV, Section 10.12]{KirbySiebenmann}. 
So, $F\times(M_{E_8}\#M_{E_8})\to M_{E_8}\#M_{E_8}$ is a fiber bundle whose total space and fiber are smoothable but whose base space is not.

The next result shows that, up to taking a finite sheeted cover, there exists a unique smooth structure on $B$ that determines the smooth structure on $M$ in the sense that $M$ is the smooth manifold determined by smoothing the map $B\to\BDiff(F)$.

\begin{cor}
Let $B$ be a topological nilmanifold, and suppose $M$ is a smooth manifold that is the total space of a bundle determined by a continuous map $g\colon B\to \BDiff(F)$.
Let $\phi\colon B'\to B$ be a smooth structure and let $M'$ denote the $F$-bundle classified by $B'\xrightarrow{\phi}B\xrightarrow{g}\BDiff(F)$.
Since $B'$ is smooth, we may smooth this map so that $M'$ is a smooth manifold.
Then there is a finite sheeted cover $p\colon\hat{B}\to B$ 
such that the pullback map $p^*M'\to p^*M$  is a homeomorphism and is isotopic to a diffeomorphism, when $p^*M'$ and $p^*M$ are endowed with the obvious smooth structures. In particular, $p^*M'$ and $p^*M$ are diffeomorphic.
\end{cor}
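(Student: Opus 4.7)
The plan is to apply Theorem~\ref{thm: smoothings of total space} to the homeomorphism $\psi\colon M'\to M$ given by the natural bundle map covering $\phi$, with $M'$ playing the role of $X$. This $\psi$ exists because $M'$ is classified by $g\circ\phi$ and $M$ by $g$, so $M'\cong\phi^*M$ as topological bundles up to the chosen smoothing; it is a homeomorphism since $\phi$ is. I need to verify the two substantive hypotheses of the theorem (the implicit dimension condition $\dim M\ge 5$ is standing): (a) $M$ has a $C^{\infty}$-embedded fiber $F_0$, and (b) $\psi$ is isotopic to a diffeomorphism on a neighborhood of $F_0$.

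For (a): any particular topological fiber $F_{b_0}\subset M$ is a compact topological submanifold of the smooth manifold $M$ that is topologically a copy of $F$. By standard approximation for topological embeddings of compact manifolds into smooth manifolds, $F_{b_0}$ is isotopic in $M$ to a $C^{\infty}$-embedded copy of $F$. By the isotopy extension theorem, this lifts to an ambient self-isotopy of $M$ whose endpoint homeomorphism $H\colon M\to M$ carries $F_{b_0}$ to a smoothly embedded submanifold. Replacing $\pi$ by $\pi\circ H^{-1}$ produces an equivalent topological bundle, classified by a map homotopic to $g$, for which the fiber over $b_0$ is smoothly embedded. So I may assume that $F_0:=\pi^{-1}(b_0)$ is $C^{\infty}$-embedded from the outset.

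For (b): since $F_0$ is a smooth submanifold of $M$, it has a smooth tubular neighborhood $T\subseteq M$. Shrinking as needed, I can arrange that $T\subseteq\pi^{-1}(U)$ for some trivializing chart $\pi^{-1}(U)\cong U\times F$ of the bundle. The smooth structure that $T$ inherits from $M$ realizes $T$ as the total space of the smooth normal bundle of $F_0$; combining this with the topological projection yields a candidate local smooth bundle structure on a neighborhood of $F_0$ that is compatible with the ambient smooth structure on $M$. Pulling this structure back along $\phi^{-1}$ gives a candidate local smooth bundle structure on a neighborhood of $\psi^{-1}(F_0)$ in $M'$. The smoothing of $g\circ\phi$ determining the smooth structure on $M'$ can be performed locally, and so can be chosen to realize this prescribed local structure. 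With this choice, $\psi$ is a diffeomorphism on a neighborhood of $\psi^{-1}(F_0)$.

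Theorem~\ref{thm: smoothings of total space} then provides a finite cover $p\colon\hat{B}\to B$ such that $p^*\psi\colon p^*M'\to p^*M$ is isotopic to a diffeomorphism, which is exactly the conclusion of the corollary. The main obstacle will be step~(b): arranging the local smoothing of $g\circ\phi$ to be compatible with the ambient smooth structure on $M$ near $F_0$, which requires keeping careful track of how smoothings of classifying maps into $\BDiff(F)$ relate to smooth structures on total spaces.
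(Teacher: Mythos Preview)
Your approach—reduce to Theorem~\ref{thm: smoothings of total space} via the natural bundle map $\psi\colon M'\to M$ covering $\phi$—is exactly the paper's. The paper's proof is a single sentence: it only explains why $M'$ is well-defined as a smooth manifold, namely that maps into $\BDiff(F)$ can be smoothed because $\BDiff(F)$ is a Fr\'echet manifold and classifies $C^\infty$-fibered bundles. The remaining hypotheses of Theorem~\ref{thm: smoothings of total space} are not verified explicitly; the paper treats them as part of the setup.

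You go further and try to check those hypotheses. There is a real gap in your step~(a): the assertion that a compact topological submanifold of a smooth manifold can be isotoped to a $C^\infty$-embedded one is \emph{not} a standard approximation theorem, and it is false in general—even locally flat embeddings can fail to be smoothable. Your subsequent use of isotopy extension would also need the topological (not smooth) version. In the paper's actual applications the smooth structure on $M$ already makes the fibers smoothly embedded (e.g.\ $M$ is produced by the main theorem, or is a smooth bundle from the start), and under that reading the hypotheses of Theorem~\ref{thm: smoothings of total space} are immediate, which is why the paper's one-line proof suffices. Your strategy for~(b)—use the freedom in choosing the smoothing of $g\circ\phi$ to match a prescribed local model near one fiber—is sound in principle, and you are right to single it out as the point requiring care.
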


\begin{proof}
This follows from Theorem \ref{thm: smoothings of total space}; the only thing not covered by that is that the classifying map can be smoothed, but this follows because maps into $\BDiff(F)$ can be smoothed because it is a smooth manifold modeled on a Fr\'{e}chet space and is the classifying space for bundles with $C^{\infty}$ fibers \cite[Thm.~44.24]{kriegl1997convenient}.
\end{proof}

\section{Topological Sphere Bundles} \label{sec: topological sphere bundles}
The map $\BDiff^{\infty}(F)\to\BDiff^{1}(F)$ is a homotopy equivalence\footnote{This is because the inclusion $\Diff^\infty(F) \hookrightarrow \Diff^1(F)$ is a homotopy equivalence.}, which implies that the theory of continuous $\Diff^1(F)$-bundles is equivalent to the theory of $\Diff^\infty(F)$-bundles.
However, the theory of $\Diff^\infty(F)$ bundles is not equivalent to the theory of $\Homeo(F)$ bundles. 
This has two potential consequences: there may be topological $F$-bundles such that the fibers cannot be made to vary smoothly, and there may be many inequivalent $\Diff^\infty(F)$-bundles which are equivalent topologically.
In this section, we focus on the second phenomenon.
We show that there can be smooth fiber bundles which are equivalent topologically but which behave differently with respect to the dynamics on the base space.
As we are interested in the homotopy type of the classifying spaces, we will write $\BDiff$ and $\Diff$ without specifying the regularity.
We write $\Diff^+$ for the group of orientation preserving diffeomorphisms.

The following theorem shows that even if a fiber bundle $M\to B$ is a $C^{\infty}$ bundle, and topologically this bundle is trivial---so that there are no topological obstructions to lifting to the bundle---then there might still not exist any partially hyperbolic diffeomorphism on the bundle. 

\begin{thm}\label{thm: counterexample}
    There exists a $C^\infty$ bundle $F\to M\to B$ such that the following hold:
    \begin{enumerate}
        \item $B$ is a nilmanifold.
        \item There is a topological bundle isomorphism $M\cong B\times F$.
        \item There is a diffeomorphism $M\cong B\times F$.
        \item There is no Anosov diffeomorphism $A\colon B\to B$ which lifts to a $C^\infty$-bundle isomorphism $M\to M$, even after passing to a finite sheeted cover of $B$.
    \end{enumerate}
\end{thm}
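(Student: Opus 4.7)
The plan is to build a smooth $S^n$-bundle $M\to B=\mathbb{T}^d$ whose classifying map $g\colon B\to\BDiff^+(S^n)$ represents a non-torsion rational class in the Atiyah--Hirzebruch-type filtration on $[B,\BDiff^+(S^n)]$, vanishes under the forgetful map to $B\Homeo^+(S^n)$, and is not fixed by any Anosov automorphism of $B$ or of any finite cover. The key technical input is the Farrell--Hsiang nonvanishing theorem (ultimately coming from Waldhausen's algebraic $K$-theory of spaces / pseudoisotopy theory): for $n$ large and $j\geq 1$ in a suitable range, $\pi_{4j-1}(\Diff^+(S^n))\otimes\mathbb{Q}$ is nonzero, while its image in $\pi_{4j-1}(\Homeo^+(S^n))\otimes\mathbb{Q}$ vanishes.

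Concretely, I would fix $j\geq 1$, choose $n$ large enough that a nonzero class $\alpha\in\pi_{4j-1}(\Diff^+(S^n))\otimes\mathbb{Q}$ in the kernel to the topological side exists, and take $d=4j+1$. This dimension is dictated by the following linear algebra: for any Anosov $A\in\SL(d,\Z)$ with eigenvalues $\lambda_1,\ldots,\lambda_d$, the induced action $\Lambda^{4j}A$ on $H^{4j}(\mathbb{T}^d;\mathbb{Q})\cong\Lambda^{4j}\mathbb{Q}^d$ has eigenvalues $\det(A)\cdot\lambda_i^{-1}$, none of which can equal $1$ since no $\lambda_i$ lies on the unit circle. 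Consequently, no Anosov automorphism of $B$ or of any finite cover (which is again a torus, hence a nilmanifold) fixes any nonzero class in $H^{4j}(\,\cdot\,;\mathbb{Q})$, and moreover non-torsion rational classes in $H^{4j}$ survive pullback to finite covers. I would then construct $g$ via the Postnikov tower of $\BDiff^+(S^n)$ so that its leading associated graded term is $\beta\otimes\alpha\in H^{4j}(B;\pi_{4j}(\BDiff^+(S^n)))$ for a chosen non-torsion primitive $\beta\in H^{4j}(B;\mathbb{Z})$; higher Postnikov obstructions lie in finite-torsion cohomology and can be cleared by replacing $B$ with a sufficiently large finite cover.

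Items~(1) and~(2) then follow respectively from the fact that $B$ is a torus and from functoriality of the Postnikov filtration applied to $\BDiff^+(S^n)\to B\Homeo^+(S^n)$: the rational part of $\alpha$ vanishes on the topological side by Farrell--Hsiang, and the remaining torsion obstructions vanish on the chosen cover, so the composed map $B\to B\Homeo^+(S^n)$ is null-homotopic. Item~(3) then follows from Theorem~\ref{thm: smoothings of total space}: after passing to a further finite cover of $B$ if necessary (still a nilmanifold), the homeomorphism $B\times F\cong M$ produced by~(2) becomes isotopic to a diffeomorphism, so $M\cong B\times F$ as smooth manifolds. For item~(4), suppose $\hat{A}$ is an Anosov diffeomorphism on some finite cover $p\colon\hat{B}\to B$. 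By Theorem~\ref{Thm:AnosovHomeoClassification} we may assume, up to a topological conjugacy homotopic to the identity, that $\hat{A}$ is a linear automorphism. A $C^\infty$-bundle lift of $\hat{A}$ to $p^*M$ is equivalent to a homotopy $(p^*g)\circ\hat{A}\simeq p^*g$ in $[\hat{B},\BDiff^+(S^n)]$, which would force $\hat{A}^*$ to fix the leading associated graded class $(p^*\beta)\otimes\alpha$. Since $p^*\beta\neq 0$ (non-torsion rational classes survive finite covers) and $\hat{A}^*$ has no nonzero fixed vector on $H^{4j}(\hat{B};\mathbb{Q})$ by the eigenvalue computation above, this contradicts the existence of the lift.

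The main obstacle will be orchestrating the Postnikov-tower construction of $g$ compatibly with the finite-cover reductions needed for items~(2) and~(3): one must verify that some single finite cover simultaneously kills the torsion obstructions to topological triviality, realizes the smooth diffeomorphism $M\cong B\times F$ via Theorem~\ref{thm: smoothings of total space}, and preserves the leading rational class non-fixed by any Anosov. Granted the Farrell--Hsiang input and the smoothing-theoretic machinery from Section~\ref{sec:smooth structures on the total space}, this is essentially bookkeeping, but it is the conceptually delicate step of the argument.
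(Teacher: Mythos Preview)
Your overall strategy matches the paper's: build a smooth $S^n$-bundle over a torus from a non-torsion class in $\pi_*(\Diff_\partial(D^n))$ that dies under the Alexander trick in $\Homeo_\partial(D^n)$, then use that an Anosov automorphism acts without eigenvalue $1$ on $H^{d-1}$ to rule out a lift. The paper makes this concrete with $n=60$: Wang--Xu ($\Theta_{61}=0$) renders $\BDiff_\partial(D^{60})$ simply connected, Kupers--Randal-Williams locates the first rational homotopy in degree $59$, and then rational Hurewicz produces an honest class in $H^{59}(\BDiff_\partial(D^{60});\Q)$. The classifying map factors as $T^{60}\to S^{59}\to\BDiff_\partial(D^{60})\to\BDiff^+(S^{60})$, and item~(2) falls out immediately (no cover needed) from the Alexander trick. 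Your route via Farrell--Hsiang and odd $n$ avoids those recent heavy inputs, which is a genuine simplification.

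Where your argument needs more than bookkeeping is item~(4). You assert that a homotopy $p^*g\simeq (p^*g)\circ\hat A$ in $[\hat B,\BDiff^+(S^n)]$ forces $\hat A^*$ to fix the ``leading associated graded class'' $p^*\beta\otimes\alpha$. But $[\hat B,\BDiff^+(S^n)]$ carries no group structure, and identifying the leading term with an element of $H^{4j}(\hat B;\pi_{4j})$ requires choosing a null-homotopy of the projection to the lower Postnikov stage $P_{4j-1}$; the indeterminacy in that choice is governed by $[\Sigma\hat B,P_{4j-1}]$, which is rationally nonzero because $\pi_{<4j}(\BDiff^+(S^n))\otimes\Q$ already receives contributions from $SO(n+1)$ (and, for $j>1$, from lower Farrell--Hsiang classes). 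Two homotopic maps can thus have different apparent ``leading terms''. What one actually needs is a bona fide class in $H^{4j}(\BDiff^+(S^n);\Q)$ that detects $\alpha$; such a class does exist (for $j=1$ one can extract it from the Serre spectral sequence of the fibration $SO(n+1)\to\BDiff_\partial(D^n)\to\BDiff^+(S^n)$), but producing it is precisely the step the paper handles by its specific choice of $n=60$, not a formal consequence of the Postnikov filtration.
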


In the example we construct, $F$ will be the sphere $S^{60}$ and $B$ will be $T^{60}$.
These manifolds are chosen for computational convenience.

In the notation of Theorem \ref{thm: counterexample}, let $\mc{F}$ denote the smooth foliation on $M$ arising from the product structure $M\cong B\times F$.
Let $\mc{G}$ denote the foliation arising from the bundle $M\to B$.
Then Theorem \ref{thm: counterexample} translates to the following corollary, which shows that leaf conjugacy loses important smooth information.

\begin{cor}\label{cor: counterexample}
    There is a smooth manifold $M$, a fibered partially hyperbolic diffeomorphism $f\colon M\to M$ with associated center foliation $\mc{F}$, a homeomorphism $g\colon M\to M$, and a $C^{\infty}$ foliation $\mc{G}$, whose fibers form a $C^{\infty}$ bundle such that the following holds.
    \begin{enumerate}
        \item The homeomorphism $g$ preserves the leaves of $\mc{G}$.
        \item There is a leaf conjugacy between $(f,\mc{F})$ and $(g,\mc{G})$.
        \item There is no $C^{\infty}$ fibered partially hyperbolic diffeomorphism with center foliation $\mc{G}$.
    \end{enumerate}
\end{cor}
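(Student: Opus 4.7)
The plan is to unpack Theorem~\ref{thm: counterexample} into the foliation language of the corollary. Let $\pi\colon M\to B$ be the $C^\infty$ bundle from that theorem, fix a diffeomorphism $\Phi\colon M\to B\times F$ realizing item (3), and fix a topological bundle isomorphism $\Psi\colon M\to B\times F$ realizing item (2), so that $\mathrm{pr}_B\circ \Psi=\pi$. Let $\mc{G}$ be the foliation of $M$ by fibers of $\pi$, which is a $C^\infty$ foliation whose leaves form a $C^\infty$ bundle by hypothesis, and let $\mc{F}$ be the pullback under $\Phi$ of the standard product foliation on $B\times F$.

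Next, I will produce the two maps. Since $B$ is a nilmanifold admitting Anosov automorphisms (concretely $B=T^{60}$ in the construction), fix an Anosov automorphism $A_0\colon B\to B$, and consider $A_0\times \mathrm{id}_F$ on $B\times F$; this is a $C^\infty$ partially hyperbolic diffeomorphism whose center foliation is the trivial product foliation. Define
\[
f \coloneqq \Phi^{-1}\circ (A_0\times \mathrm{id}_F)\circ \Phi, \qquad g \coloneqq \Psi^{-1}\circ (A_0\times \mathrm{id}_F)\circ \Psi.
\]
Then $f$ is a $C^\infty$ fibered partially hyperbolic diffeomorphism of $M$ with center foliation $\mc{F}$, while $g$ is a homeomorphism of $M$ that preserves $\mc{G}$ because $\Psi$ is a topological bundle isomorphism from $\pi$ to the trivial projection and $A_0\times\mathrm{id}_F$ preserves the product foliation. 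The homeomorphism $\eta \coloneqq \Psi^{-1}\circ \Phi$ satisfies $\eta\circ f=g\circ\eta$ by direct cancellation and sends each leaf $\Phi^{-1}(\{b\}\times F)$ of $\mc{F}$ to $\Psi^{-1}(\{b\}\times F)=\pi^{-1}(b)$, which is a leaf of $\mc{G}$. This delivers the leaf (in fact, topological) conjugacy between $(f,\mc{F})$ and $(g,\mc{G})$.

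The main content is ruling out the existence of a $C^\infty$ fibered partially hyperbolic diffeomorphism with center foliation $\mc{G}$, and this is where Theorem~\ref{thm: counterexample}(4) does the heavy lifting. Suppose for contradiction that $h\colon M\to M$ were such a map. Because $\mc{G}$ is the fiber foliation of the smooth submersion $\pi$, $h$ descends to a smooth map $\bar{h}\colon B\to B$ with $\pi\circ h=\bar{h}\circ\pi$, and $h$ is a $C^\infty$ bundle automorphism of $\pi$. Since $E^c_h=\ker D\pi$, the differential $D\pi$ maps $E^s_h$ and $E^u_h$ fiberwise isomorphically onto complementary $D\bar{h}$-invariant subbundles of $TB$, and the partial hyperbolicity estimates for $Dh$ push down to hyperbolic bounds for $D\bar{h}$; thus $\bar{h}$ is a $C^\infty$ Anosov diffeomorphism of $B$ which lifts via $h$ to a $C^\infty$ bundle automorphism of $\pi$, directly contradicting Theorem~\ref{thm: counterexample}(4). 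The whole argument is robust under passing to finite sheeted covers, matching the finite-cover strength of the non-lifting hypothesis. The genuine obstacle is entirely concentrated in proving Theorem~\ref{thm: counterexample} itself; within the corollary, the only delicate point is verifying that the pushed-down splitting on $TB$ inherits hyperbolic bounds, a standard computation that I would record explicitly using compactness of $M$ to bound $\|D\pi|_{E^{s/u}_h}\|$ from above and below.
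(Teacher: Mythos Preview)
Your proposal is correct and takes exactly the approach the paper intends: the paragraph preceding the corollary sets up $\mc{F}$ and $\mc{G}$ just as you do, and the paper then simply asserts the result follows immediately from Theorem~\ref{thm: counterexample}, giving no further detail. Your argument that the quotient $\bar h$ is Anosov is sound; if you want to be fully explicit, note that $D_x\pi(E^{s}_h(x))$ is independent of the choice of $x$ in the fiber because it coincides with the set of tangent vectors in $T_{\pi(x)}B$ whose forward $D\bar h$-iterates decay, so the pushed-down splitting is genuinely a splitting of $TB$.
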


This corollary follows immediately from Th

\begin{rem}
    In fact, there are infinitely many foliations $\mc{G}$ satisfying the conclusion of Corollary \ref{cor: counterexample}.
\end{rem}

\subsection{Preliminaries}
\subsubsection{Characteristic Classes}
We begin with a review of Pontryagin classes of real vector bundles.
Let $BO(n)$ denote the classifying space for the orthogonal group.
Explicitly, this is the Grassmannian of $n$-planes in $\R^\infty$.
The canonical $n$-plane bundle $\gamma_n$ is the subspace of $BO(n)\times\R^\infty$ consisting of pairs $(V,v)$ where $V\subseteq\R^\infty$ is a subspace an $v\in V$.
If $E$ is a rank $n$ vector bundle over a manifold $M$, then there is a classifying map $\phi_E\colon M\to BO(n)$ such that $\phi_E^*\gamma_n=E$.

Two vector bundles $\pi_0\colon E_0\to M$ and $\pi_1\colon E_1\to M$ are \emph{isomorphic} if there is a continuous map $f\colon E_0\to E_1$ such that
\begin{enumerate}
    \item $\pi_1\circ F=\pi_0$ and
    \item $f|_{\pi_0^{-1}(x)}\colon \pi_0^{-1}(x)\to\pi_1^{-1}(x)$ is a vector space isomorphism for all $x\in M$.
\end{enumerate}

If $E_0$ and $E_1$ are isomorphic vector bundles, then the classifying maps $\phi_{E_0}$ and $\phi_{E_1}$ are homotopic.
So, to show that two bundles are not isomorphic, it suffices to show that their classifying maps are not homotopic.
Cohomology is a very useful tool for this.
The following is on \cite[p.\ 179]{MilnorStasheff}.

\begin{thm}
There are ring isomorphisms
\[
H^*(BO(2n+1);\Q)\cong H^*(BSO(2n+1);\Q)\cong\Q[p_1,p_2,\cdots,p_n]
\]
where $\abs{p_i}=4i$.
\end{thm}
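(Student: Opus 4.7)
The plan is to reduce the rational cohomology computation to invariant theory for the action of the Weyl group of $SO(2n+1)$ on the cohomology of a maximal torus. This is the classical approach via Borel's theorem, and the odd-dimensional case is especially clean.

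First, I would show that $H^*(BO(2n+1);\Q) \cong H^*(BSO(2n+1);\Q)$. The key observation is that $-I \in O(2n+1)$ has determinant $(-1)^{2n+1} = -1$, so there is an internal direct product decomposition $O(2n+1) \cong SO(2n+1) \times \{\pm I\}$, giving $BO(2n+1) \simeq BSO(2n+1) \times B(\Z/2)$. Since $H^*(B(\Z/2);\Q) = \Q$ concentrated in degree $0$, the K\"unneth formula yields the isomorphism. (Alternatively, the Serre spectral sequence of the $\Z/2$-covering $BSO(2n+1) \to BO(2n+1)$ collapses rationally for the same reason.)

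Next, I would compute $H^*(BSO(2n+1);\Q)$ using the maximal torus. Let $T^n \subset SO(2n+1)$ be the maximal torus of block-diagonal matrices $\diag(R(\theta_1), \ldots, R(\theta_n), 1)$, where $R(\theta)$ is the planar rotation by $\theta$. The Weyl group is the hyperoctahedral group $W = (\Z/2)^n \rtimes S_n$, acting on the parameters $\theta_i$ by signed permutations. Since the rational cohomology of $SO(2n+1)/T^n$ is concentrated in even degrees and has total dimension $|W|$, Borel's theorem applies and the map $BT^n \to BSO(2n+1)$ induces an isomorphism
\[
H^*(BSO(2n+1);\Q) \cong H^*(BT^n;\Q)^W.
\]

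Finally, I would identify the invariants. We have $H^*(BT^n;\Q) = \Q[x_1,\ldots,x_n]$ with $|x_i| = 2$, where $x_i$ is the Euler class of the $i$-th circle factor. The Weyl group $W$ acts by permutations and sign changes on the $x_i$, so the ring of invariants is generated by the elementary symmetric polynomials $e_1(x_1^2,\ldots,x_n^2), \ldots, e_n(x_1^2,\ldots,x_n^2)$, a polynomial subring in $n$ generators of degrees $4, 8, \ldots, 4n$. Via the splitting principle, the canonical bundle $\gamma_{2n+1}$ pulled back to $BT^n$ splits as a direct sum of $2$-plane bundles (plus a trivial line bundle), with total Pontryagin class $\prod_{i=1}^n(1+x_i^2)$; so the $k$-th symmetric function $e_k(x_1^2,\ldots,x_n^2)$ is precisely $p_k$. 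The main conceptual step is the invocation of Borel's theorem and the identification of the generators of the invariant ring with Pontryagin classes; both are standard but are where the content of the theorem sits.
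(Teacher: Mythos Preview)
Your argument is correct and follows the classical Borel approach: reduce to $BSO(2n+1)$ via the splitting $O(2n+1)\cong SO(2n+1)\times\{\pm I\}$ in odd dimensions, then invoke Borel's theorem to identify $H^*(BSO(2n+1);\Q)$ with the Weyl-invariants in $H^*(BT^n;\Q)=\Q[x_1,\dots,x_n]$, and finally compute the invariants of the hyperoctahedral group as the polynomial ring on $e_k(x_1^2,\dots,x_n^2)$, which the splitting principle identifies with $p_k$.

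The paper, however, does not prove this statement at all: it simply cites it as a standard fact from \cite[p.~179]{MilnorStasheff}. So there is no ``paper's own proof'' to compare against. Your write-up is essentially the argument one finds in Milnor--Stasheff or Borel, and is entirely appropriate; it supplies what the paper only quotes.
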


The Pontryagin classes of a rank $2n+1$ vector bundle $E$ are defined as $p_i(E):=\phi_E^*(p_i)\in H^{4i}(M;\Q)$.
Vector bundles with different Pontryagin classes are not isomorphic.
\begin{rem}
    These classes are actually classes in the integral cohomology.
    The cohomology of $BO(2n)$ is slightly more complicated; there is a class that squares to $p_n$.
    The integral Pontryagin classes are defined similarly.
\end{rem}

In the definition of a bundle isomorphism $f$, we usually require the following diagram to commute where $A$ is the identity
\[
\begin{tikzpicture}[scale=1.5]
    \node (A) at (0,1) {$E_0$};\node (B) at (2,1) {$E_1$};
    \node (C) at (0,0) {$B$};\node (D) at (2,0) {$B$};
    \path[->] (A) edge node[above]{$f$} (B) (A) edge node[left]{$\pi_0$} (C) (B) edge node[right]{$\pi_1$} (D) (C) edge node[above]{$A$} (D);
\end{tikzpicture}.
\]
We will be interested in the case where $A$ is not necessarily the identity.
In general, the above diagram can be enlarged as follows.
\[
\begin{tikzpicture}[scale=1.5]
    \node (A) at (0,1) {$E_0$};\node (B) at (2,1) {$A^*E_1$};\node (C) at (4,1) {$E_1$};
    \node (D) at (0,0) {$B$};\node (E) at (2,0) {$B$};\node (F) at (4,0) {$B$};
    \path[->](A) edge node[above]{$A^*f$} (B) (B) edge (C) (A) edge node[left]{$\pi_0$} (D) (B) edge node[left]{$A^*\pi_1$} (E) (C) edge node[right]{$\pi_1$} (F) (D) edge node[above]{$id_B$} (E) (E) edge node[above]{$A$} (F);
\end{tikzpicture}
\]
Here, $A^*\pi\colon A^*E_1\to B$ is the pullback bundle.
Recall that this is defined as follows.
\[
A^*E_1:=\{(b,x)\in B\times E_1|A(b)=\pi_1(x)\}.
\]
The map $A^*E_1\to E_1$ is defined by sending $(b,x)$ to $x$ and $A^*\pi_1(b,x)=b$.
One can verify that right square commutes and that the restriction to fibers is a vector space isomorphism.
Define $A^*f(x)=(\pi_0(x),f(x))$.
One can verify that the left square commutes and that the composition $E_0\to A^*E_1\to E_1$ is $f$.
Thus, the question of whether there exists a map covering $A$ is equivalent to whether there exists a map $A^*f$ covering the identity.
Moreover, $f$ is an isomorphism on fibers if and only if $A^*f$ is an isomorphism on fibers.
Also, the classifying map for $A^*E_1$ is $\phi_{E_1}\circ A$ so the Pontryagin classes of $A^*E_1$ are $p_i(A^*E_1)=A^*p_i(E_1)$.
To summarize, we see that, if $p_i(E_0)\neq A^*p_i(E_1)$ for some $i=1,\ldots ,n$, then there is no map $f$ as above restricting to a vector space isomorphism on the fibers.

\begin{example}
    Let $E$ be a vector bundle over $S^4$ whose first Pontryagin class is nontrivial.
    Such a vector bundle exists because $\pi_4 BO\cong\Z$ and any map which is not nullhomotopic will have nontrivial first Pontryagin class.
    Let $T^m$ be a torus where $m>4$ and let $g\colon T^m\to S^4$ be the map obtained by projecting to $T^4$ and then collapsing the $3$-skeleton.
    On cohomology, $g^*\colon H^4(S^4)\to H^4(T^m)$ sends the generator to $\omega=\omega_1\wedge\omega_2\wedge\omega_3\wedge\omega_4$ so the first Pontryagin class of $g^*E$ is some nonzero multiple of this $4$-form.
    If $A\colon T^m\to T^m$ is a map such that $A^*\omega\neq\omega$, then there is no $f$ which is an isomorphism on fibers making the following diagram commute:
    \[
    \begin{tikzpicture}[scale=2]
        \node (A) at (0,1) {$g^*E$};\node (B) at (2,1) {$g^*E$};
        \node (C) at (0,0) {$B$};\node (D) at (2,0) {$B$};
        \path[->] (A) edge node[above]{$f$} (B) (A) edge (C) (B) edge (D) (C) edge node[above]{$A$} (D);
    \end{tikzpicture}.
    \]
\end{example}

\subsection{Some rational homotopy computations}
The discussion of Pontryagin classes for vector bundles applies to bundles with structure group $\Diff(F)$ and $\Homeo(F)$.
However, these bundles are much more difficult to work with because the cohomology groups $H^*(\BDiff(F))$ and $H^*(\operatorname{BHomeo}(F))$ are more complicated.
In this subsection, we import some computations of the rational homotopy groups of these spaces which will aid us in constructing examples.
We are particularly interested in the case $F=S^n$.

We first make the following definition.

\begin{definition}
Let $\Diff_{\partial}(D^n)$ be the space of diffeomorphisms of the disk that are the identity on a neighborhood of the boundary.
Similarly, define $\Homeo_{\partial}(D^n)$ to be the space of homeomorphisms that are the identity on the boundary.
\end{definition}

The following well-known homotopy equivalence relates $\Diff^+(S^n)$ and $\Diff_{\partial}(D^n)$ \cite[Lemma 1.1.5]{AntonelliBurgheleaKahn}.
\begin{prop}\label{prop: diff of S^n decomp}
There is a homotopy equivalence
\[
\Diff^+(S^n)\simeq O(n+1)\times \Diff_{\partial}(D^n).
\]
\end{prop}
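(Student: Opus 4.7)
The plan is to exhibit this decomposition via an evaluation fibration admitting a section, following the classical argument of Palais and Cerf. Fix a basepoint $p_0 \in S^n$ and a reference orthonormal frame $e_0$ at $p_0$, and consider the evaluation map
\[
\operatorname{ev}\colon \Diff^+(S^n) \longrightarrow \mathrm{Fr}^+(S^n), \qquad f \longmapsto \bigl(f(p_0),\, \GS(D_{p_0}f \cdot e_0)\bigr),
\]
where $\mathrm{Fr}^+(S^n)$ denotes the oriented orthonormal frame bundle of the round sphere and $\GS$ denotes fiberwise Gram--Schmidt orthonormalization. Because the isometry group acts simply transitively on the frame bundle, the evaluation map realizes $\mathrm{Fr}^+(S^n)\cong \SO(n+1)$, which up to the $\Z/2$ of orientation conventions matches the $O(n+1)$ appearing in the statement. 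By the parameterized isotopy extension theorem, $\operatorname{ev}$ is a locally trivial fibration.

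Next, I would produce a section. The inclusion of the orthogonal group into $\Diff(S^n)$ as the isometry group of the round metric, composed with $\operatorname{ev}$, is a homeomorphism onto the frame bundle. Inverting this composition yields a section $s\colon O(n+1)\to \Diff^+(S^n)$ of $\operatorname{ev}$.

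The heart of the argument is identifying the fiber $F$ of $\operatorname{ev}$ over the reference frame with $\Diff_\partial(D^n)$. This fiber consists of orientation-preserving diffeomorphisms $f$ with $f(p_0)=p_0$ and $D_{p_0}f=\id$. In an exponential chart around $p_0$, the rescaling $f_t(x)=t^{-1}f(tx)$ for $t\in (0,1]$, suitably interpolated with $f$ itself outside a shrinking neighborhood of $p_0$ via a smooth cutoff, provides a continuous deformation of $f$ toward its derivative, which is the identity by assumption. This yields a deformation retraction of $F$ onto the subspace of diffeomorphisms that are the identity on a closed disk $D^n_\varepsilon$ about $p_0$. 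The complement $S^n\setminus\operatorname{int}(D^n_\varepsilon)$ is diffeomorphic to $D^n$, giving a homeomorphism of this subspace with $\Diff_\partial(D^n)$.

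Combining the section with the fiber identification, the map $O(n+1)\times \Diff_\partial(D^n)\to \Diff^+(S^n)$ given by $(A,\phi)\mapsto s(A)\circ \phi$ is a homotopy equivalence, proving the proposition. The main obstacle is the parameterized linearization in the fiber identification: one must verify that the rescaling and cutoff operations combine into a genuine deformation retraction in the $C^\infty$ topology on $\Diff(S^n)$, which amounts to a standard but technical appeal to Cerf's fragmentation theorem together with the continuity of isotopy extension applied to parameter families of embedded disks.
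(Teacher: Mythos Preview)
Your approach is correct and is the classical Cerf--Palais argument that the paper is implicitly invoking: the paper does not prove this proposition but cites \cite{AntonelliBurgheleaKahn} and describes the map $\SO(n+1)\times\Diff_\partial(D^n)\to\Diff^+(S^n)$ exactly as you do, via composing the rotation action with the extension-by-identity from one hemisphere. Your fibration argument with the frame-bundle evaluation and the linearization retract of the fiber is precisely how one proves this map is a homotopy equivalence.

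One small point worth flagging: the statement as printed has $O(n+1)$, but, as the paper's own explanatory paragraph and your frame-bundle computation both make clear, the correct group for $\Diff^+(S^n)$ is $\SO(n+1)$; your hand-wave about the $\Z/2$ of orientation conventions is covering for a typo in the statement rather than an actual mathematical step, and you should simply say so.
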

The homotopy equivalence in the theorem is fairly explicit: $\SO(n+1)$ acts on $S^n$ by rotations; an element of $\Diff^{+}_{\partial}(D^n)$ naturally defines a diffeomorphism of $S^n$ that acts non-trivially on the upper hemisphere and fixes the lower hemisphere pointwise. The homotopy equivalence is given by composing the diffeomorphisms arising from these two subgroups of $\Diff^+(S_n)$.
Hence the subgroup $\{\Id\}\times\Diff_{\partial}(D^n)\subseteq \Diff^+(S^n)$ corresponds to the diffeomorphisms which fix a small neighborhood of a common point where $\Id$ denotes the identity of $O(n+1)$.
The Alexander trick gives a deformation retract of the space $\Homeo_{\partial}(D^n)$ to a point.
The composite
\begin{equation}\label{eqn:inclusion_to_homeo}
\{\Id\}\times\Diff_{\partial}(D^n)\to\Diff^+(S^n)\to\Homeo^+(S^n)
\end{equation}
factors through $\Homeo_{\partial}(D^n)$ so it is nullhomotopic.

Farrell--Hsiang \cite{FarrellHsiang} compute the rational homotopy groups of $\Diff_{\partial}(D^n)$ in a range.

\begin{thm}[Farrell--Hsiang]\label{thm: Farrell Hsiang}
Suppose $0<d<\frac{n}{6}-7$.
Then,
\[
\pi_d\Diff_{\partial}(D^n)\otimes\Q\cong\begin{cases}\Q&n\text{ odd}, d\equiv -1\text{ mod }4\\0&\text{otherwise}.
\end{cases}
\]
\end{thm}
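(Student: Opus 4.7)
The plan is to compute $\pi_d\Diff_\partial(D^n)\otimes\Q$ by passing to pseudoisotopy theory, then to Waldhausen's algebraic $K$-theory of spaces, and finally invoking Borel's calculation of $K_*(\Z)\otimes\Q$. The range $d<n/6-7$ is exactly the range in which the relevant stability theorems apply.

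First, I would relate $\Diff_\partial(D^n)$ to the pseudoisotopy space $P(M):=\Diff(M\times I;\, M\times\{0\}\cup\partial M\times I)$. The Burghelea--Lashof--Rothenberg (or Hatcher) fibration provides an exact sequence that, schematically, expresses $\pi_d\Diff_\partial(D^n)$ in terms of $\pi_dP(D^{n-1})$ together with contributions from lower-dimensional diffeomorphism groups. Igusa's concordance stability theorem then asserts that the stabilization $P(D^n)\to P(D^{n+1})$ is $k$-connected for $n\gtrsim 3k$, so in our range the groups stabilize to the stable pseudoisotopy groups $\pi_d\mathcal{P}(\ast)$. The factor of $6$ appearing in $n/6-7$ reflects that one has to stabilize twice (once to pass from $\Diff_\partial$ to $P$, once more to reach the stable pseudoisotopy range).

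Second, I would invoke Waldhausen's identification of the smooth Whitehead spectrum $\operatorname{Wh}^{\mathrm{Diff}}(\ast)\simeq \Sigma^2\mathcal{P}(\ast)$ together with the splitting $A(\ast)\simeq Q(S^0)\times \operatorname{Wh}^{\mathrm{Diff}}(\ast)$. Rationally, $Q(S^0)\otimes\Q$ is concentrated in degree $0$, and the linearization map $A(\ast)\to K(\Z)$ is a rational equivalence since the unit $\mathbb{S}\to H\Z$ is rationally so. Thus $\pi_d\mathcal{P}(\ast)\otimes\Q$ is, up to an explicit dimension shift of $2$, a summand of $K_{d+2}(\Z)\otimes\Q$. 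By Borel's theorem, $K_i(\Z)\otimes\Q=\Q$ precisely for $i=0$ and for $i\equiv 1\pmod 4$ with $i\geq 5$, and vanishes otherwise. Under the degree shift $i=d+2$, the nonvanishing condition $i\equiv 1\pmod 4$ translates to $d\equiv -1\pmod 4$, which is exactly the residue class in the theorem.

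Finally, the parity condition on $n$ must be extracted from the involution on pseudoisotopy space obtained by flipping the $I$-coordinate; this involution rationally decomposes $\mathcal{P}(\ast)\otimes\Q$ into $(\pm1)$-eigenspaces. Tracking this involution through the BLR fibration shows that for $\Diff_\partial(D^n)$ only one eigenspace contributes, and which one depends on the parity of $n$. For $n$ even the relevant eigenspace rationally vanishes (its fixed set intersects the Borel classes trivially), while for $n$ odd it picks out exactly one copy of $\Q$ in each degree $d\equiv -1\pmod 4$. I expect the main obstacle to be precisely bookkeeping the degree shift and the eigenspace decomposition: both the Waldhausen splitting and the involution act with nontrivial shifts, and one has to carefully align them with the Borel regulators to confirm that exactly one $\Q$ survives in each predicted degree (rather than two, zero, or a higher multiplicity), and that the parity condition falls out correctly.
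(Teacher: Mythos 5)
The paper does not prove this statement: it is quoted verbatim from Farrell--Hsiang and used as a black box, so there is no internal proof to compare against. Your sketch is, however, an accurate roadmap of how the theorem is actually established in the literature, essentially the Farrell--Hsiang argument in its modern Waldhausen-theoretic form: pass from $\Diff_{\partial}(D^n)$ to concordance spaces via the Hatcher/Burghelea--Lashof--Rothenberg machinery, stabilize using Igusa's concordance stability (which is where the range $d<n/6-7$ comes from --- concordance stability gives roughly $n/3$, and the passage back to diffeomorphisms halves it, as you say), identify $\pi_d\mathcal{P}(\ast)\otimes\Q$ with $K_{d+2}(\Z)\otimes\Q$ via $\mathcal{P}(\ast)\simeq\Omega^2\operatorname{Wh}^{\mathrm{Diff}}(\ast)$, the splitting $A(\ast)\simeq Q(S^0)\times\operatorname{Wh}^{\mathrm{Diff}}(\ast)$, and the rational equivalence $A(\ast)\to K(\Z)$, and finally apply Borel's theorem. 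Your bookkeeping of the degree shift is right: $K_i(\Z)\otimes\Q\cong\Q$ for $i\equiv1\pmod4$, $i\ge5$ (note $K_1(\Z)\cong\Z/2$, which is why the first nonzero group occurs at $d=3$), translating to $d\equiv-1\pmod4$. The dependence on the parity of $n$ does indeed come from the canonical involution on the concordance space and the comparison with block diffeomorphisms/surgery, with only one eigenspace surviving in $\pi_d\Diff_{\partial}(D^n)\otimes\Q$. The one caveat is that this is a proof \emph{outline} deferring every substantive step to a major external theorem (Igusa stability, Waldhausen's splitting, Borel's computation, the Weiss--Williams-type involution analysis); the eigenspace computation in the last paragraph in particular is asserted rather than carried out, and it is exactly the step where Farrell--Hsiang's surgery-theoretic input is needed. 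Since the paper itself imports the statement without proof, this level of detail is appropriate for context but should be cited, not re-derived.
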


For $n\ge5$, $\pi_0\Diff_{\partial}(D^n)$ is in bijection with the group of homotopy $(n+1)$-spheres $\Theta_{n+1}$ (which is finite by \cite{KervaireMilnor}). When one constructs $BG$ by Milnor's join construction, one obtains a fibration $G\to EG\to BG$, with $EG$ contractible, hence from the long exact sequence of a fibration it follows that $\pi_d\BDiff_{\partial}(D^n)\cong\pi_{d-1}\Diff_{\partial}(D^n)$ for $d>0$. Hence we get
\[
\pi_1 \BDiff_{\partial}(D^n)\cong\Theta_{n+1}
\]
and
\[
\pi_{4k} \BDiff_{\partial}(D^n)\cong\Q
\]
when $n$ is odd and $k>0$ is sufficiently small.

We will be interested in the case $n$ is even so Theorem \ref{thm: Farrell Hsiang} does not give us anything to work with.
The following recent result from \cite{KupersRW} extends Theorem \ref{thm: Farrell Hsiang} and gives us the first nonzero rational homotopy group of $\Diff_{\partial}(D^{2n})$,
\begin{thm}[Kupers--Randal-Williams]
Let $2n\ge 6$.
Then in degrees $d\le 4n-10$ there is an isomorphism
\[
\pi_d(\BDiff_{\partial}(D^{2n}))\otimes\Q\cong\begin{cases}\Q&d\ge2n-1\text{ and }d\equiv2n-1\text{ mod }4\\0&\text{otherwise}\end{cases}.
\]
\end{thm}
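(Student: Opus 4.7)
The plan is to reduce the rational homotopy computation of $\Diff_{\partial}(D^{2n})$ to a rational cohomology computation on moduli spaces of high-genus $2n$-manifolds, where a Galatius--Randal-Williams (GRW) style theorem supplies a stable-range answer, and then to extend beyond the GRW range using cellular $E_{2n}$-algebra techniques. Concretely, I would fix the manifold $W_{g,1} = (\#^{g} S^{n}\times S^{n})\setminus \operatorname{int}(D^{2n})$ and use Morlet's theorem together with a variant of the Weiss fiber sequence to compare $\BDiff_{\partial}(D^{2n})$ with a relative piece of $\BDiff_{\partial}(W_{g,1})$ for genus $g$ chosen sufficiently large relative to the target degree. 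The homotopy equivalence $\Diff^{+}(S^{n})\simeq O(n+1)\times\Diff_{\partial}(D^{n})$ recalled above, applied fiberwise, clarifies why information about $\Diff_{\partial}(D^{2n})$ is encoded in such moduli spaces.

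Second, in the GRW stable range $* \le (g-3)/2$, the rational cohomology of $\BDiff_{\partial}(W_{g,1})$ is freely generated by the tautological $\kappa_{c}$-classes indexed by monomials $c$ in the Pontryagin and Euler classes of the vertical tangent bundle; this comes from identifying the relevant group completion with the infinite loop space of the Madsen--Tillmann--Weiss spectrum $\operatorname{MTSO}(2n)$. To push the validity of this computation up to $d \le 4n-10$, I would invoke the cellular $E_{2n}$-algebra framework of Kupers--Randal-Williams: treat $\bigsqcup_{g} \BDiff_{\partial}(W_{g,1})$ as an $E_{2n}$-algebra and analyze its cellular resolution, bounding the degrees in which new generators and relations can appear via connectivity estimates for the associated splitting complexes. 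This yields exactly the stated pattern: a copy of $\Q$ in each degree $d\equiv 2n-1 \pmod 4$ with $d\ge 2n-1$, and zero in the intermediate degrees. Converting rational cohomology to rational homotopy is then routine, since by Morlet's theorem $\BDiff_{\partial}(D^{2n})$ is rationally a double loop space, hence a simply connected H-space, so Milnor--Moore identifies its rational homotopy groups with the indecomposables of its rational cohomology.

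The principal obstacle is the extension from the stable GRW range to the unstable range $d\le 4n-10$. The cellular $E_{2n}$-algebra argument demands very delicate connectivity estimates for Hatcher-type splitting complexes, together with control over secondary differentials in the spectral sequence computing the homology of the $E_{2n}$-algebra; it is from these estimates that the precise numerical bound $4n-10$ emerges. A secondary difficulty is verifying that the expected $\kappa$-classes survive under restriction to the disk factor rather than being killed by higher differentials, which requires a careful analysis of how the tautological classes on $\BDiff_{\partial}(W_{g,1})$ restrict to the fiber $\BDiff_{\partial}(D^{2n})$ in the Morlet fibration. Once these technical points are under control, the parity condition $d\equiv 2n-1\pmod 4$ falls out of the structure of the Pontryagin/Euler monomials together with the loop-space shift from Morlet's equivalence.
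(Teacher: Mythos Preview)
The paper does not prove this theorem; it is quoted as a black box from the literature (the reference \cite{KupersRW}) and used only via its consequence Corollary~\ref{cor: relavant computations for BDiff D60}. So there is nothing in the paper to compare your argument against.

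That said, your sketch does capture the broad outline of the actual Kupers--Randal-Williams strategy: Morlet's equivalence, the Weiss fibre sequence relating $\BDiff_{\partial}(D^{2n})$ to moduli of $W_{g,1}$, the Galatius--Randal-Williams stable cohomology computation, and cellular $E_k$-algebra methods to push beyond the stable range. A few points are imprecise. Morlet's theorem identifies $\BDiff_{\partial}(D^{2n})$ with a component of $\Omega^{2n}(\mathrm{Top}(2n)/O(2n))$, a $2n$-fold loop space rather than merely a double loop space. The $E_k$-algebra structure one actually uses on $\bigsqcup_g \BDiff_{\partial}(W_{g,1})$ comes from boundary connected sum and is of low arity (essentially $E_1$ or $E_2$), not $E_{2n}$. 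Finally, your last step---passing from rational cohomology to rational homotopy via Milnor--Moore---requires genuine simple connectivity, whereas $\pi_1\BDiff_{\partial}(D^{2n})\cong\Theta_{2n+1}$ is in general a nontrivial finite group; one must either work with a simply connected cover or argue more carefully. None of this affects the use made of the theorem in the present paper, which only needs the single nonvanishing group $\pi_{59}\BDiff_{\partial}(D^{60})\otimes\Q\cong\Q$.
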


The following is the main result of \cite{WangXu}.
\begin{thm}[Wang--Xu]\label{thm: Wang--Xu}
The sphere $S^{61}$ has a unique smooth structure.
\end{thm}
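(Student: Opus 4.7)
The plan is to translate the smooth topology statement into a stable homotopy computation via Kervaire--Milnor, then carry out that computation. By Kervaire--Milnor surgery theory, isotopy classes of smooth structures on $S^{61}$ modulo connected sum with standard spheres form a finite abelian group $\Theta_{61}$, so $S^{61}$ has a unique smooth structure if and only if $\Theta_{61}=0$. There is an exact sequence
\begin{equation*}
0 \to bP_{62} \to \Theta_{61} \to \pi_{61}^S / \operatorname{im}(J),
\end{equation*}
where $J$ denotes the stable $J$-homomorphism. Since $62 \equiv 2 \pmod{4}$, the group $bP_{62}$ is either $0$ or $\Z/2$, and it vanishes precisely when the Kervaire invariant is surjective on $\pi_{62}^S$; the latter is a classical theorem of Barratt--Jones--Mahowald. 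Hence the whole problem reduces to showing that $\pi_{61}^S = 0$.

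The computation of $\pi_{61}^S$ splits by primes. At odd primes, the localizations $\pi_{61}^S \otimes \Z_{(p)}$ vanish by classical chromatic calculations (Ravenel, etc.), so the substantive part is the $2$-primary piece. Here I would follow the approach of Wang and Xu and use the motivic Adams spectral sequence (MASS) over $\operatorname{Spec}(\C)$ to compute $\pi_{61}^S \otimes \Z_{(2)}$. The MASS specializes to the classical Adams spectral sequence but carries an extra grading by motivic weight together with a class $\tau$ whose associated Bockstein spectral sequence relates the motivic $E_2$-page to the classical one. This extra structure makes it possible to detect and force Adams differentials that are inaccessible classically, and to disentangle the numerous hidden $2$-, $\eta$-, and $\nu$-extensions that appear in this range.

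The concrete steps would be: (a) compute the $E_2$-page of the MASS in a range comfortably containing the $61$-stem, via the motivic May spectral sequence and the cohomology of the motivic Steenrod algebra; (b) propagate differentials from lower stems using multiplicative structure, Massey products, and $\tau$-Bockstein arguments; (c) compare against the $\R$-motivic or $C_2$-equivariant Adams spectral sequence to resolve remaining ambiguities; (d) determine the hidden extensions on the $E_\infty$-page to reconstruct $\pi_{61}^S \otimes \Z_{(2)}$ and verify that it vanishes. The main obstacle is the sheer bookkeeping: in the range near the $61$-stem there are many potential classes, each of which must be killed either by a differential or paired off by an extension, and every step depends on the correctness of the previous ones. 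Missing even a single $d_r$-differential could leave a spurious surviving class, so the genuine difficulty is in making the web of differentials and extensions airtight, rather than in any single conceptual leap.
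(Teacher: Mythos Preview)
The paper does not prove this theorem at all: it is stated as ``the main result of \cite{WangXu}'' and simply cited without argument, then used only through its consequence $\pi_1\BDiff_{\partial}(D^{60})\cong\Theta_{61}=0$. So there is no ``paper's own proof'' to compare against; your proposal is in effect a sketch of the Wang--Xu paper itself, which is well beyond the scope of what the present paper does or needs.

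As a sketch of Wang--Xu, your outline is broadly accurate. The reduction via Kervaire--Milnor is correct: since $61\equiv 5\pmod 8$ one has $\pi_{61}(O)=0$, so $\operatorname{im}(J)=0$ in this degree, and Barratt--Jones--Mahowald's $\theta_5\in\pi_{62}^S$ kills $bP_{62}$, leaving $\Theta_{61}\cong\pi_{61}^S$. The substantive content is then the $2$-primary vanishing of $\pi_{61}^S$, and you correctly identify the motivic Adams spectral sequence machinery (Isaksen's charts, $\tau$-Bockstein, motivic May) as the engine. One point to sharpen: the heart of Wang--Xu is not generic bookkeeping but the identification of a specific Adams differential (on a particular class in the $61$-stem) that had previously been believed to vanish; the argument hinges on a delicate Toda bracket manipulation and comparison with $\mathbb{R}$-motivic data to force that one differential. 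Framing it as ``sheer bookkeeping'' undersells where the genuine new idea lies.
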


Because $\pi_d\BDiff_{\partial}(D^n)\cong\pi_{d-1}\Diff_{\partial}(D^n)$ and $\pi_1 \BDiff_{\partial}(D^n)\cong\Theta_{n+1}$, the relevant consequence of these difficult theorems is the following.

\begin{cor}\label{cor: relavant computations for BDiff D60}
The space $\BDiff_{\partial}(D^{60})$ is simply connected and its first nonzero rational homotopy group is
\[
\pi_{59}(\BDiff_{\partial}(D^{60}))\otimes\Q\cong\Q.
\]
\end{cor}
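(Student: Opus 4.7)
The plan is to read off both claims directly from the two black-box theorems cited just before the corollary, together with the shift $\pi_d\BDiff_\partial(D^n)\cong\pi_{d-1}\Diff_\partial(D^n)$ (which holds for $d>0$ via the long exact sequence of the universal fibration $\Diff_\partial(D^n)\to E\Diff_\partial(D^n)\to\BDiff_\partial(D^n)$, and is already recorded in the paper). So the proof is essentially bookkeeping on two separate ranges, one for $\pi_1$ and one for the rest.

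For simple connectivity I would argue as follows. By the shift, $\pi_1\BDiff_\partial(D^{60})\cong\pi_0\Diff_\partial(D^{60})$. Since $60\ge 5$, the Cerf/Kervaire--Milnor identification recalled in the excerpt gives $\pi_0\Diff_\partial(D^{60})\cong\Theta_{61}$, the group of homotopy $61$-spheres. The theorem of Wang--Xu (Theorem \ref{thm: Wang--Xu}) asserts that $S^{61}$ admits a unique smooth structure, so $\Theta_{61}=0$. Therefore $\pi_1\BDiff_\partial(D^{60})=0$, which gives simple connectedness.

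For the rational homotopy computation I would apply Kupers--Randal-Williams with $2n=60$, so $n=30$. The allowed range is $d\le 4n-10=110$, and inside this range
\[
\pi_d\BDiff_\partial(D^{60})\otimes\Q\cong\begin{cases}\Q & d\ge 59\text{ and }d\equiv 59\pmod 4,\\ 0 & \text{otherwise.}\end{cases}
\]
The smallest $d$ satisfying the nonzero case is $d=59$, which lies well below $110$, so $\pi_{59}\BDiff_\partial(D^{60})\otimes\Q\cong\Q$ and all lower degrees vanish rationally. Combined with the previous paragraph (which handles $\pi_1$ integrally, hence in particular rationally), this gives that $59$ is the first degree with nonzero rational homotopy.

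There is essentially no obstacle here since the two cited theorems do all the work; the only care needed is to not conflate the two ranges. In particular, the Kupers--Randal-Williams statement quoted begins at $d\ge 2n-1=59$, so it says nothing about $d=1$, which is why the Wang--Xu input is genuinely needed to rule out integral torsion in $\pi_1$ (a purely rational statement would be insufficient to conclude simple connectedness). Once this split is made, both conclusions are immediate.
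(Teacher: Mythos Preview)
Your proof is correct and follows exactly the approach the paper intends: use $\pi_1\BDiff_\partial(D^{60})\cong\Theta_{61}=0$ via Wang--Xu for simple connectivity, and read off the rational homotopy groups from Kupers--Randal-Williams with $2n=60$. One small wording quibble: the Kupers--Randal-Williams theorem does apply at $d=1$ (since $1\le 110$) and already gives $\pi_1\otimes\Q=0$; what you mean, and correctly explain in the next clause, is that this rational vanishing alone cannot rule out torsion in $\pi_1$, so Wang--Xu is genuinely needed for the integral statement of simple connectedness.
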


The significance of Corollary \ref{cor: relavant computations for BDiff D60} is that it allows us to apply the rational Hurewicz theorem.

\begin{thm}[Rational Hurewicz Theorem]\label{thm: Q Hurewicz}
    Suppose $X$ is simply connected.
    Let $h\colon \pi_n X\to H_n(X)$ be the map sending $\alpha\colon S^n\to X$ to $\alpha_*[S^n]$ where $[S^n]\in H_n(S^n)$ is the fundamental class.
    Let $n_0>0$ be the smallest integer such that $\pi_n X\otimes\Q\neq0$.
    Then $h\colon \pi_{n_0} X\otimes\Q\to H_{n_0}(X;\Q)$ is an isomorphism.
\end{thm}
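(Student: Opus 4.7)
The plan is to reduce to the classical integral Hurewicz theorem by replacing $X$ with a highly connected cover that has the same rational homology. I would build an iterated homotopy-fiber tower
\[
\tilde X = X_{n_0} \to X_{n_0-1} \to \cdots \to X_2 = X,
\]
where each $X_{k+1}$ is the homotopy fiber of a canonical Postnikov section $X_k \to K(\pi_k X, k)$ representing the identity on $\pi_k$. The long exact sequence of homotopy groups shows that $X_{k+1}$ is obtained from $X_k$ by killing $\pi_k$ while preserving all higher homotopy groups, so inductively $\tilde X$ is $(n_0-1)$-connected with $\pi_{n_0}\tilde X \cong \pi_{n_0} X$. Each stage fits in a fibration $K(\pi_k X, k-1)\to X_{k+1}\to X_k$ with an Eilenberg--MacLane fiber whose homotopy group $\pi_k X$ is torsion for $k<n_0$, by the choice of $n_0$.

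The core computational lemma is that, for any torsion abelian group $A$ and any $m\ge 1$, the reduced homology $\tilde H_i(K(A,m);\Z)$ is torsion for every $i>0$. I would prove this by induction on $m$. For $m=1$, writing $A$ as a filtered colimit of finite cyclic groups and using continuity of homology reduces the claim to the well-known torsion homology of finite lens spaces. For the inductive step, apply the Serre spectral sequence to the path-loop fibration
\[
K(A,m-1)\to PK(A,m)\to K(A,m);
\]
since the total space is contractible and the fiber has torsion reduced homology by induction, one uses repeatedly that the class of torsion abelian groups is a Serre class (closed under subobjects, quotients, extensions, and tensor products with an arbitrary abelian group) to force $\tilde H_*(K(A,m);\Z)$ to be torsion in positive degrees.

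With the lemma in hand, the Serre spectral sequence of each fibration $K(\pi_k X, k-1)\to X_{k+1}\to X_k$ has
\[
E_2^{p,q}\otimes\Q = H_p\bigl(X_k;\,H_q(K(\pi_k X,k-1))\bigr)\otimes\Q,
\]
which vanishes whenever $q>0$. So each map $X_{k+1}\to X_k$ is a rational homology equivalence, and telescoping down the tower gives $H_*(\tilde X;\Q)\cong H_*(X;\Q)$. Applying the classical integral Hurewicz theorem to the $(n_0-1)$-connected space $\tilde X$ produces an isomorphism $\pi_{n_0}\tilde X\xrightarrow{\cong}H_{n_0}(\tilde X;\Z)$; tensoring with the flat $\Z$-module $\Q$ and pushing forward along $\tilde X\to X$ yields the desired isomorphism, thanks to naturality of the Hurewicz map.

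The main obstacle is the torsion-homology lemma for $K(A,m)$: once it is in hand the spectral-sequence manipulations are formal. Equivalently, one can sidestep the bookkeeping by invoking Serre's mod-$\mc{C}$ Hurewicz theorem directly, taking $\mc{C}$ to be the Serre class of torsion abelian groups, which packages exactly these computations into a single statement.
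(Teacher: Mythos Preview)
Your proof is correct and follows one of the standard routes to the rational Hurewicz theorem: kill the sub-$n_0$ (torsion) homotopy groups via a Postnikov-type tower, use the torsion homology of $K(A,m)$ for torsion $A$ to see that each stage is a rational homology equivalence, and then invoke the integral Hurewicz theorem on the resulting $(n_0-1)$-connected cover. The only spot worth tightening is the final sentence: you should make explicit that the square
\[
\begin{tikzpicture}[scale=1.8]
\node (A) at (0,1) {$\pi_{n_0}\tilde X\otimes\Q$};\node (B) at (2,1) {$H_{n_0}(\tilde X;\Q)$};
\node (C) at (0,0) {$\pi_{n_0}X\otimes\Q$};\node (D) at (2,0) {$H_{n_0}(X;\Q)$};
\path[->] (A) edge node[above]{$h$} (B) (A) edge node[left]{$\cong$} (C) (B) edge node[right]{$\cong$} (D) (C) edge node[above]{$h$} (D);
\end{tikzpicture}
\]
commutes by naturality of the Hurewicz map, so that the bottom arrow inherits being an isomorphism from the top one. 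As you note, this entire argument is exactly Serre's mod-$\mc{C}$ Hurewicz theorem with $\mc{C}$ the class of torsion abelian groups.

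As for comparison with the paper: there is nothing to compare. The paper states Theorem~\ref{thm: Q Hurewicz} as a known classical result and gives no proof; it is quoted only to apply it to $\BDiff_{\partial}(D^{60})$. So your write-up supplies an argument where the paper offers none.
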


\subsection{Constructing a Bundle}
Let $\omega:=\omega_1\wedge\omega_2\wedge\cdots\wedge\omega_{59}\in H^{59}(T^{60})$ and let $g\colon T^{60}\to S^{59}$ be defined by projecting first onto $T^{59}$ and then collapsing the $58$-skeleton.
On cohomology, $g^*\colon H^{59}(S^{59})\to H^{59}(T^{60})$ sends a generator to $\omega$.

Let $M_0$ be the $S^{60}$-bundle defined over $S^{59}$ via a classifying map 
\begin{equation}
S^{59}\to \BDiff_{\partial}(D^{60})\to \BDiff^+(S^{60}),
\end{equation}
where the first map represents a nontrivial element of $\pi_{59}\BDiff_{\partial}(D^{60})$, which exists by Corollary 
\ref{cor: relavant computations for BDiff D60}, and the second map is the one in Proposition \ref{prop: diff of S^n decomp} that turns a map of a $D^n$ into a map of $S^n$ fixing one hemisphere. Conceptually, the bundle we obtain is as follows. The map $S^{59}\to \BDiff_{\partial}(D^{60})$ defines a $D^{60}$ bundle over $S^{59}$ that is a trivial bundle along the boundaries of the disks; hence we can (trivially) glue another disk along the boundaries to get an $S^{60}$ bundle. 
Since $\BDiff_{\partial}(D^{60})$ is simply connected (Cor.~\ref{cor: relavant computations for BDiff D60}), we may apply the rational Hurewicz theorem (Thm.~\ref{thm: Q Hurewicz}).
So the map ${h\colon}\pi_{59}\BDiff_{\partial}(D^{60})\otimes\Q\to H_{59}(\BDiff_{\partial}(D^{60});\Q)$ defined by sending a map $\varphi \colon S^{59}\to \BDiff_{\partial}(D^{60})$ to the image of the fundamental class $\varphi_*[S^{59}]$ is an isomorphism. 
Let $\varphi$ represent a nontrivial element of $\pi_{59}\BDiff_{\partial}(D^{60})\otimes\Q$.
We have
\[
\varphi_*\colon H_{59}(S^{59};\Q)\to H_{59}(\BDiff_{\partial}(D^{60});\Q)
\]
is an isomorphism as it is a nonzero $\Q$-linear transformation between two one-dimensional $\Q$-vector spaces.
It follows from the universal coefficients theorem that $H^{59}(\BDiff_{\partial}(D^{60});\Q)\cong\Q$ and that the map induced by $\varphi$ on cohomology $\varphi^*\colon H^{59}(\BDiff_{\partial}(D^{60});\Q)\to H^{59}(S^{59};\Q)$ is an isomorphism.

Let $M$ denote the pullback of $M_0$ to $T^{60}$ via $g$, and let $f\colon T^{60}\to\BDiff(S^{60})$ be the classifying map of $M$. The following proposition summarizes the properties of the map $f$ we have just established.

\begin{prop}
There is a nonzero $\alpha\in H^{59}(\BDiff^+(S^{60});\Q)$ and a map $f\colon T^{60}\to \BDiff^+(S^{60})$ such that $f^*\alpha$ is a nonzero scalar multiple of the class $\omega=\omega_1\wedge\cdots\wedge \omega_{59}$.
\end{prop}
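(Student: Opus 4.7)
The plan is to observe first that, by construction of $f$ and $M_0$, the map $f$ factors as $f=\psi\circ\varphi\circ g$, where $\psi\colon\BDiff_\partial(D^{60})\to\BDiff^+(S^{60})$ is the map on classifying spaces induced by the subgroup inclusion from Proposition \ref{prop: diff of S^n decomp}. The preceding paragraph already established that $\varphi^*$ is an isomorphism on $H^{59}(-;\Q)$ and that $g^*$ sends a generator of $H^{59}(S^{59};\Q)$ to $\omega$. The proposition therefore reduces to producing a class $\alpha\in H^{59}(\BDiff^+(S^{60});\Q)$ with $\psi^*\alpha$ a generator of $H^{59}(\BDiff_\partial(D^{60});\Q)\cong\Q$, i.e., to showing that $\psi^*$ is surjective (in fact an isomorphism) on $H^{59}(-;\Q)$.

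To carry this out I would analyze $\BDiff^+(S^{60})$ rationally. By Theorem \ref{thm: Wang--Xu}, $\Theta_{61}=0$, and combined with the homotopy splitting of Proposition \ref{prop: diff of S^n decomp} this gives $\pi_0\Diff^+(S^{60})=\pi_0\SO(61)\times\pi_0\Diff_\partial(D^{60})=0$, so $\BDiff^+(S^{60})$ is simply connected. The same splitting also yields, for every $k$,
\[
\pi_k\Diff^+(S^{60})\otimes\Q\cong\pi_k\SO(61)\otimes\Q\oplus\pi_k\Diff_\partial(D^{60})\otimes\Q,
\]
where the first summand sits in degrees $k\equiv 3\pmod 4$ (so vanishes at $k=58$), while the second, by Kupers--Randal-Williams, first becomes nonzero at $k=58$ with value $\Q$. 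Consequently $\pi_{58}\Diff^+(S^{60})\otimes\Q\cong\Q$ comes entirely from the $\Diff_\partial(D^{60})$-summand, and $\psi$ induces an isomorphism on $\pi_{59}(-)\otimes\Q$.

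To upgrade this rational homotopy isomorphism to a cohomological one in degree $59$, I would invoke the classical theorem of Hopf, Cartan--Serre, and Borel that a connected H-space of finite rational type is rationally equivalent, as an H-space, to a product of Eilenberg--MacLane spaces. Delooping the resulting rational H-space equivalences for the topological groups $\Diff^+(S^{60})$ and $\Diff_\partial(D^{60})$, both classifying spaces become rationally equivalent to products of shifted Eilenberg--MacLane spaces; their rational cohomology is therefore a free graded commutative algebra on indecomposables in the degrees determined by the rational homotopy groups, with vanishing Sullivan differential. In degree $59$, which is odd, the lower rational generators coming from $\SO(61)$ lie in degrees divisible by $4$ and so admit no products reaching an odd total degree, leaving a single exterior generator of $H^{59}(\BDiff^+(S^{60});\Q)$ coming from $\pi_{58}\Diff^+(S^{60})\otimes\Q$. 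This gives $H^{59}(\BDiff^+(S^{60});\Q)\cong\Q\cong H^{59}(\BDiff_\partial(D^{60});\Q)$, and the $\pi_{58}$-isomorphism above implies that $\psi^*$ is an isomorphism on $H^{59}(-;\Q)$. Taking $\alpha$ to be any nonzero class then completes the proof.

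The hard part is the careful use of the H-space/delooping dictionary: the rational splittings of the topological groups as products of Eilenberg--MacLane spaces must descend to rational splittings of the classifying spaces, which requires enough compatibility with the $H$-space (equivalently $A_\infty$) structure. Should this prove delicate, an alternative is to work directly with the Serre spectral sequence of the fibration $\SO(61)\to\BDiff_\partial(D^{60})\to\BDiff^+(S^{60})$ coming from the subgroup inclusion and argue that the edge homomorphism $H^{59}(\BDiff^+(S^{60});\Q)\to H^{59}(\BDiff_\partial(D^{60});\Q)$ is surjective by checking that every class of total degree $59$ with positive fiber degree is killed by differentials, but this seems to require essentially the same rational homotopy input.
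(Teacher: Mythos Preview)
Your reduction is exactly right: the paper's construction gives $f=\psi\circ\varphi\circ g$, establishes that $g^*$ hits $\omega$ and that $\varphi^*$ is an isomorphism on $H^{59}(-;\Q)$, and then simply asserts the proposition as a summary. The paper does not spell out why the class lives in $H^{59}(\BDiff^+(S^{60});\Q)$ rather than only in $H^{59}(\BDiff_\partial(D^{60});\Q)$; you have correctly isolated the missing step, namely that $\psi^*$ is surjective on $H^{59}(-;\Q)$.

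However, your main argument for this step does not go through as written. The Cartan--Serre/Hopf theorem gives a rational equivalence of the \emph{space} $\Diff^+(S^{60})$ with a product of Eilenberg--MacLane spaces, but this equivalence is not an $A_\infty$-map in general, so it cannot be delooped to conclude that $\BDiff^+(S^{60})$ is rationally a product of Eilenberg--MacLane spaces. Indeed, for the topological group weakly equivalent to $\Omega S^2$ one has $G\simeq_\Q K(\Q,1)\times K(\Q,2)$, yet $BG\simeq S^2$ is \emph{not} rationally a product of Eilenberg--MacLane spaces because its Sullivan model has a nonzero differential. So the inference ``rational H-space splitting of $G$ $\Rightarrow$ rational product decomposition of $BG$'' fails, and with it your computation of $H^{59}(\BDiff^+(S^{60});\Q)$.

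The cleanest fix is a Serre spectral sequence, but for the \emph{other} fibration. The subgroup inclusion $SO(61)\hookrightarrow\Diff^+(S^{60})$ gives a fibre sequence
\[
\BDiff_\partial(D^{60})\xrightarrow{\psi}\BDiff^+(S^{60})\xrightarrow{\pi}BSO(61)
\]
which admits a section. Since $\BDiff_\partial(D^{60})$ is rationally $58$-connected, every differential $d_r\colon E_r^{0,59}\to E_r^{r,60-r}$ with $2\le r\le 59$ has zero target, while the only remaining differential $d_{60}\colon E_{60}^{0,59}\to E_{60}^{60,0}$ lands in the bottom row and hence vanishes because the section forces all differentials into $E_r^{*,0}$ to be zero. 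Thus $E_\infty^{0,59}=E_2^{0,59}\cong\Q$, and the fibre edge homomorphism $\psi^*\colon H^{59}(\BDiff^+(S^{60});\Q)\twoheadrightarrow E_\infty^{0,59}=H^{59}(\BDiff_\partial(D^{60});\Q)$ is surjective, which is what you need. Your proposed alternative spectral sequence (with fibre $SO(61)$) leads to the same conclusion but requires tracking many more potentially nonzero terms on the $p+q=59$ diagonal; the fibration above, with its $58$-connected fibre and its section, makes the argument a two-line check.
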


This property of the classifying map will be used to show that the structure of the resulting bundle is incompatible with Anosov diffeomorphisms on the base.

\begin{proof}[Proof of Theorem \ref{thm: counterexample}]
We now show that the bundle $M$ defined above by the classifying map $f\colon T^{60}\to \BDiff^{+}(S^{60})$ satisfies the conclusions of Theorem \ref{thm: counterexample} after pulling back along an appropriate finite sheeted cover of the base.

Condition 1) is clear because tori are nilmanifolds, and a finite cover of a torus is a torus.

For 2), as explained in the discussion surrounding \eqref{eqn:inclusion_to_homeo}, note that $f$ fits in the following diagram, where the arrows involving the $\operatorname{BHomeo}$'s are the obvious maps.
\[
\begin{tikzpicture}[scale=2]
    \node (A) at (0,1) {$T^{60}$};\node (B) at (2,1) {$\BDiff_{\partial}(D^{60})$};\node (C) at (4,1) {$\operatorname{BHomeo}_{\partial}(D^{60})$};
    \node (D) at (2,0) {$\BDiff^+(S^{60})$};\node (E) at (4,0) {$\operatorname{BHomeo}^+(S^{60})$};
    \path[->] (A) edge (B) (A) edge node[below left]{$f$} (D) (B) edge (C) (B) edge (D) (C) edge (E) (D) edge (E);
\end{tikzpicture}
\]
As mentioned above, the space $\operatorname{BHomeo}_{\partial}(D^{60})$ is contractible by the Alexander trick so the map to $\operatorname{BHomeo}^+(S^{60})$ is nullhomotopic,  and hence topologically the bundle is trivial. The same holds when we pull back to any finite cover of $T^{60}$.

To arrange that 3) holds, recall that $f\colon T^{60}\to \operatorname{BHomeo}^+(S^{60})$ is nullhomotopic.
It follows that there is an isomorphism of topological $S^{60}$-bundles over $T^{60}$, $\phi\colon T^{60}\times S^{60}\to M$.
In particular, $\phi$ is a homeomorphism and one can check that the conditions of Theorem \ref{thm: smoothings of total space} are satisfied.
So there is a finite sheeted cover $p\colon T^{60}\to T^{60}$ such that the pullback $p^*\phi\colon T^{60}\times S^{60}\to p^* M$ of $\phi$ is a diffeomorphism. 

For 4), let $A\colon T^{60}\to T^{60}$ be an Anosov diffeomorphism.
Let $p\colon T^{60}\to T^{60}$ be the finite sheeted cover from the previous part.
Note that $p$ induces an isomorphism on rational cohomology.
The induced map of $A$ on $H_1(T^{60})$ has no eigenvalues of unit length \cite[Thm.\ 18.6.1]{katok1997introduction}.
Poincar\'e duality implies that $A^*{p^*}\omega\neq {p^*}\omega$ so $A^*{p^*}f^*\alpha\neq {p^*}f^*\alpha\in H^{59}(T^{60};\Q)$.
It follows that there is no map $M\to M$ of $\Diff(S^{60})$-bundles which lifts $A$.
\end{proof}

\begin{rem}
This phenomenon should hold in greater generality; the use of $S^{60}$ was just so $\BDiff_{\partial}(D^{60})$ would be simply connected.
To deal with other spheres, one would have to analyze the action of $\pi_1\Diff_{\partial}(D^{60})$ on the higher homotopy groups.
\end{rem}

\bibliographystyle{amsalpha}
\bibliography{biblio}

\providecommand{\bysame}{\leavevmode\hbox to3em{\hrulefill}\thinspace}
\providecommand{\MR}{\relax\ifhmode\unskip\space\fi MR }
\providecommand{\MRhref}[2]{%
  \href{http://www.ams.org/mathscinet-getitem?mr=#1}{#2}
}
\providecommand{\href}[2]{#2}
\begin{thebibliography}{KRW25}

\bibitem[ABK72]{AntonelliBurgheleaKahn}
P.~L. Antonelli, D.~Burghelea, and P.~J. Kahn, \emph{The non-finite homotopy
  type of some diffeomorphism groups}, Topology \textbf{11} (1972), 1--49.
  \MR{292106}

\bibitem[AP11]{alias2011on}
Luis~J. Al\'ias and Paolo Piccione, \emph{On the manifold structure of the set
  of unparameterized embeddings with low regularity}, Bull. Braz. Math. Soc.
  (N.S.) \textbf{42} (2011), no.~2, 171--183. \MR{2833797}

\bibitem[Avi10]{avila2010on}
Artur Avila, \emph{On the regularization of conservative maps}, Acta Math.
  \textbf{205} (2010), no.~1, 5--18. \MR{2736152}

\bibitem[AVW22]{Avila_Viana_Wilkinson_2022}
Artur Avila, Marcelo Viana, and Amie Wilkinson, \emph{Absolute continuity,
  {Lyapunov} exponents, and rigidity {II}: systems with compact center leaves},
  Ergodic Theory and Dynamical Systems \textbf{42} (2022), no.~2, 437--490,
  Publisher: Cambridge University Press.

\bibitem[BB16]{bohnetbonatti2016}
Doris Bohnet and Christian Bonatti, \emph{Partially hyperbolic diffeomorphisms
  with a uniformly compact center foliation: the quotient dynamics}, Ergodic
  Theory Dynam. Systems \textbf{36} (2016), no.~4, 1067--1105. \MR{3492970}

\bibitem[Boa99]{BoardmanSS}
J.~Michael Boardman, \emph{Conditionally convergent spectral sequences},
  Homotopy invariant algebraic structures ({B}altimore, {MD}, 1998), Contemp.
  Math., vol. 239, Amer. Math. Soc., Providence, RI, 1999, pp.~49--84.
  \MR{1718076}

\bibitem[BW08]{burns_wilkinson_2008}
Keith Burns and Amie Wilkinson, \emph{Dynamical coherence and center bunching},
  Discrete and Continuous Dynamical Systems \textbf{22} (2008), no.~1\&2,
  89--100, Publisher: Discrete and Continuous Dynamical Systems.

\bibitem[CC99]{CandelConlon}
Alberto Candel and Lawrence Conlon, \emph{Foliations {I}}, Graduate {Studies}
  in {Mathematics}, no.~23, American Mathematical Society, Providence, R.I,
  1999 (English).

\bibitem[DK01]{DavisKirk}
James~F. Davis and Paul Kirk, \emph{Lecture notes in algebraic topology},
  Graduate Studies in Mathematics, vol.~35, American Mathematical Society,
  Providence, RI, 2001. \MR{1841974}

\bibitem[Don83]{Donaldson}
S.~K. Donaldson, \emph{An application of gauge theory to four-dimensional
  topology}, J. Differential Geom. \textbf{18} (1983), no.~2, 279--315.
  \MR{710056}

\bibitem[Dou23]{doucette2023smooth}
Meg Doucette, \emph{Smooth models for certain fibered partially hyperbolic
  systems}, Ergodic Theory and Dynamical Systems (2023), 1–26.

\bibitem[Dou24]{DoucetteThesis}
Margaret~Emily Doucette, \emph{Smooth models for certain fibered partially
  hyperbolic systems}, Phd thesis, The University of Chicago, June 2024.

\bibitem[FG12]{farrell2012anosov}
F.~Thomas Farrell and Andrey Gogolev, \emph{Anosov diffeomorphisms constructed
  from {$\pi_k({\rm Diff}(S^n))$}}, J. Topol. \textbf{5} (2012), no.~2,
  276--292. \MR{2928077}

\bibitem[FG14]{farrell2014space}
\bysame, \emph{The space of {A}nosov diffeomorphisms}, J. Lond. Math. Soc. (2)
  \textbf{89} (2014), no.~2, 383--396. \MR{3188624}

\bibitem[FGO15]{farrell2015exotic}
F.~Thomas Farrell, Andrey Gogolev, and Pedro Ontaneda, \emph{Exotic topology in
  geometry and dynamics}, Handbook of group actions. {V}ol. {II}, Adv. Lect.
  Math. (ALM), vol.~32, Int. Press, Somerville, MA, 2015, pp.~3--22.
  \MR{3382023}

\bibitem[FH78]{FarrellHsiang}
F.~T. Farrell and W.~C. Hsiang, \emph{On the rational homotopy groups of the
  diffeomorphism groups of discs, spheres and aspherical manifolds}, Algebraic
  and geometric topology ({P}roc. {S}ympos. {P}ure {M}ath., {S}tanford {U}niv.,
  {S}tanford, {C}alif., 1976), {P}art 1, Proc. Sympos. Pure Math., vol. XXXII,
  Amer. Math. Soc., Providence, RI, 1978, pp.~325--337. \MR{520509}

\bibitem[FJ78]{farrell1978anosov}
F.~T. Farrell and L.~E. Jones, \emph{Anosov diffeomorphisms constructed from
  {$\pi \sb{1}\,{\rm Diff}\,(S\sp{n})$}}, Topology \textbf{17} (1978), no.~3,
  273--282. \MR{508890}

\bibitem[FKS13]{FKSD}
David Fisher, Boris Kalinin, and Ralf Spatzier, \emph{Global rigidity of higher
  rank {A}nosov actions on tori and nilmanifolds}, J. Amer. Math. Soc.
  \textbf{26} (2013), no.~1, 167--198, With an appendix by James F. Davis.
  \MR{2983009}

\bibitem[Fra69]{Franks_Anosov_tori}
John Franks, \emph{Anosov diffeomorphisms on tori}, Transactions of the
  American Mathematical Society \textbf{145} (1969), 117--124. \MR{253352}

\bibitem[Gog11]{gogolev2011partially}
Andrey Gogolev, \emph{Partially hyperbolic diffeomorphisms with compact center
  foliations}, J. Mod. Dyn. \textbf{5} (2011), no.~4, 747--769. \MR{2903756}

\bibitem[GOH15]{gogolev2015new}
Andrey Gogolev, Pedro Ontaneda, and Federico~Rodriguez Hertz, \emph{New
  partially hyperbolic dynamical systems {I}}, Acta Math. \textbf{215} (2015),
  no.~2, 363--393. \MR{3455236}

\bibitem[Hat02]{Hatcher}
Allen Hatcher, \emph{Algebraic topology}, Cambridge University Press,
  Cambridge, 2002. \MR{1867354}

\bibitem[Hir76]{hirsch1976differential}
Morris~W. Hirsch, \emph{Differential topology}, Graduate Texts in Mathematics,
  vol. No. 33, Springer-Verlag, New York-Heidelberg, 1976. \MR{448362}

\bibitem[Hir89]{Hiraide_tori}
Koichi Hiraide, \emph{Expansive homeomorphisms with the pseudo-orbit tracing
  property of n-tori}, Journal of the Mathematical Society of Japan \textbf{41}
  (1989), no.~3, 357--389. \MR{MR0999503}

\bibitem[HPS77]{hirsch1977invariant}
M.~W. Hirsch, C.~C. Pugh, and M.~Shub, \emph{Invariant manifolds}, Lecture
  Notes in Mathematics, vol. Vol. 583, Springer-Verlag, Berlin-New York, 1977.
  \MR{501173}

\bibitem[HW99]{hasselblatt1999prevalence}
Boris Hasselblatt and Amie Wilkinson, \emph{Prevalence of non-{L}ipschitz
  {A}nosov foliations}, Ergodic Theory Dynam. Systems \textbf{19} (1999),
  no.~3, 643--656. \MR{1695913}

\bibitem[KH97]{katok1997introduction}
Anatole Katok and Boris Hasselblatt, \emph{Introduction to the modern theory of
  dynamical systems}, Cambridge University Press, 1997.

\bibitem[KM63]{KervaireMilnor}
Michel~A. Kervaire and John~W. Milnor, \emph{Groups of homotopy spheres. {I}},
  Ann. of Math. (2) \textbf{77} (1963), 504--537. \MR{148075}

\bibitem[KM97]{kriegl1997convenient}
Andreas Kriegl and Peter~W. Michor, \emph{The convenient setting of global
  analysis}, Mathematical Surveys and Monographs, vol.~53, American
  Mathematical Society, Providence, RI, 1997. \MR{1471480}

\bibitem[KRW25]{KupersRW}
Alexander Kupers and Oscar Randal-Williams, \emph{On diffeomorphisms of
  even-dimensional discs}, J. Amer. Math. Soc. \textbf{38} (2025), no.~1,
  63--178. \MR{4810061}

\bibitem[KS77]{KirbySiebenmann}
Robion~C. Kirby and Laurence~C. Siebenmann, \emph{Foundational essays on
  topological manifolds, smoothings, and triangulations}, Annals of Mathematics
  Studies, vol. No. 88, Princeton University Press, Princeton, NJ; University
  of Tokyo Press, Tokyo, 1977, With notes by John Milnor and Michael Atiyah.
  \MR{645390}

\bibitem[Man74]{Manning_conjugacy}
Anthony Manning, \emph{There are {No} {New} {Anosov} {Diffeomorphisms} on
  {Tori}}, American Journal of Mathematics \textbf{96} (1974), no.~3, 422--429.

\bibitem[McC01]{McClearySS}
John McCleary, \emph{A user's guide to spectral sequences}, second ed.,
  Cambridge Studies in Advanced Mathematics, vol.~58, Cambridge University
  Press, Cambridge, 2001. \MR{1793722}

\bibitem[Mil64]{milnor1964microbundles}
J.~Milnor, \emph{Microbundles. {I}}, Topology \textbf{3} (1964), no.~suppl,
  53--80. \MR{161346}

\bibitem[MS74]{MilnorStasheff}
John~W. Milnor and James~D. Stasheff, \emph{Characteristic classes}, Annals of
  Mathematics Studies, vol. No. 76, Princeton University Press, Princeton, NJ;
  University of Tokyo Press, Tokyo, 1974. \MR{440554}

\bibitem[M{\"u}l14]{Muller}
Stefan M{\"u}ller, \emph{Uniform approximation of homeomorphisms by
  diffeomorphisms}, Topology Appl. \textbf{178} (2014), 315--319. \MR{3276746}

\bibitem[PSW97]{pugh1997holder}
Charles Pugh, Michael Shub, and Amie Wilkinson, \emph{H\"{o}lder foliations},
  Duke Math. J. \textbf{86} (1997), no.~3, 517--546. \MR{1432307}

\bibitem[PSW12]{pugh2012holder}
\bysame, \emph{H\"older foliations, revisited}, J. Mod. Dyn. \textbf{6} (2012),
  no.~1, 79--120. \MR{2929131}

\bibitem[Sma67]{smale_differentiable_1967}
Stephen Smale, \emph{Differentiable dynamical systems}, Bulletin of the
  American Mathematical Society \textbf{73} (1967), no.~6, 747--817.
  \MR{MR0228014}

\bibitem[Sum96]{Sumi}
Naoya Sumi, \emph{Topological {Anosov} maps of infra-nil-manifolds}, Journal of
  the Mathematical Society of Japan \textbf{48} (1996), no.~4, 607--648.

\bibitem[Wil98]{wilkinson1998stable}
Amie Wilkinson, \emph{Stable ergodicity of the time-one map of a geodesic
  flow}, Ergodic Theory Dynam. Systems \textbf{18} (1998), no.~6, 1545--1587.
  \MR{1658611}

\bibitem[WX17]{WangXu}
Guozhen Wang and Zhouli Xu, \emph{The triviality of the 61-stem in the stable
  homotopy groups of spheres}, Ann. of Math. (2) \textbf{186} (2017), no.~2,
  501--580. \MR{3702672}

\end{thebibliography}

\end{document}